\newcommand{\sysn}{\left\{\begin{array}{rcl}}
\newcommand{\sysk}{\end{array}\right.}
\newtheorem{theorem}{Theorem}[section]
\theoremstyle{example}
\newtheorem{proposition}[theorem]{Proposition}
\theoremstyle{definition}
\newtheorem{corollary}[theorem]{Corollary}
\begin{document}

\begin{frontmatter}



\title{ The application of selection principles in the study of the properties of function spaces  \tnoteref{label1}}


\author{Alexander V. Osipov}

\ead{OAB@list.ru}


\address{Krasovskii Institute of Mathematics and Mechanics, Ural Federal
 University,

 Ural State University of Economics, Yekaterinburg, Russia}

\begin{abstract} For a Tychonoff space $X$, we denote by $C_p(X)$
the space of all real-valued continuous functions on $X$ with the
topology of pointwise convergence.

In this paper we prove that:

$\bullet$ if every finite power of $X$ is Lindel$\ddot{e}$of then
$C_p(X)$ to be strongly sequentially separable iff  $X$ is
$\gamma$-set.

$\bullet$  $B_{\alpha}(X)$ --- functions Baire class $\alpha$
($1<\alpha\leq\omega_1$) on a Tychonoff space $X$ with the
pointwise topology --- to be sequentially separable iff there
 exists a Baire isomorphism class $\alpha$ from a space $X$ onto a
 $\sigma$-set.

$\bullet$ $B_{\alpha}(X)$ is strongly sequentially separable iff
$iw(X)=\aleph_0$ and $X$ is a $Z^{\alpha}$-cover $\gamma$-set for
$0<\alpha\leq \omega_1$.

$\bullet$ there is a consistent example of a set of reals $X$,
such that $C_p(X)$ is strongly sequentially separable, but
$B_1(X)$ is not strongly sequentially separable.

$\bullet$  $B(X)$ is sequentially separable, but is not strongly
sequentially separable for a $\mathfrak{b}$-Sierpi$\acute{n}$ski
set $X$.
\end{abstract}

\begin{keyword}
strongly sequentially separable \sep sequentially separable \sep
function spaces \sep selection principles \sep Gerlits-Nage
$\gamma$ property \sep  Baire function \sep $\sigma$-set  \sep
$S_1(\Omega,\Gamma)$ \sep $S_1(B_{\Omega},B_{\Gamma})$ \sep
$\gamma$-set \sep $C_p$ space \sep
$\mathfrak{b}$-Sierpi$\acute{n}$ski set


\MSC 37F20 \sep 26A03 \sep 03E75  \sep 54C35

\end{keyword}

\end{frontmatter}



\section{Introduction}
\label{}

\smallskip
Many topological properties are defined or characterized in terms
 of the following classical selection principles given in a general form in \cite{sch}.
 Let $\mathcal{A}$ and $\mathcal{B}$ be sets consisting of
families of subsets of an infinite set $X$. Then:

$S_{1}(\mathcal{A},\mathcal{B})$ is the selection hypothesis: for
each sequence $(A_{n}: n\in \omega)$ of elements of $\mathcal{A}$
there is a sequence $(b_{n}: n\in\omega)$ such that for each $n$,
$b_{n}\in A_{n}$, and $\{b_{n}: n\in\omega \}$ is an element of
$\mathcal{B}$.

$S_{fin}(\mathcal{A},\mathcal{B})$ is the selection hypothesis:
for each sequence $(A_{n}: n\in \omega)$ of elements of
$\mathcal{A}$ there is a sequence $(B_{n}: n\in\omega)$ of finite
sets such that for each $n$, $B_{n}\subseteq A_{n}$, and
$\bigcup_{n\in\omega}B_{n}\in\mathcal{B}$.

$U_{fin}(\mathcal{A},\mathcal{B})$ is the selection hypothesis:
whenever $\mathcal{U}_1$, $\mathcal{U}_2, ... \in \mathcal{A}$ and
none contains a finite subcover, there are finite sets
$\mathcal{F}_n\subseteq \mathcal{U}_n$, $n\in \omega$, such that
$\{\bigcup \mathcal{F}_n : n\in \omega\}\in \mathcal{B}$.

\medskip

 The papers \cite{scheeper,ko,sch,sch1,bts} have initiated the simultaneous
 consideration of these properties in the case where $\mathcal{A}$ and
 $\mathcal{B}$ are important families of open covers of a
 topological space $X$.

\medskip

 An open cover $\mathcal{U}$ of a space $X$ is:

 $\bullet$ an {\it $\omega$-cover} if $X$ does not belong to
 $\mathcal{U}$ and every finite subset of $X$ is contained in a
 member of $\mathcal{U}$.


$\bullet$ a {\it $\gamma$-cover} if it is infinite and each $x\in
X$ belongs to all but finitely many elements of $\mathcal{U}$.


For a topological space $X$ we denote:

$\bullet$ $\mathcal{O}$ --- the family of open covers of $X$;

$\bullet$ $\Omega$ --- the family of open $\omega$-covers of $X$;

$\bullet$ $\Omega_{cz}^{\omega}$ --- the family of countable
cozero $\omega$-covers of $X$;

$\bullet$ $\Gamma$ --- the family of open $\gamma$-covers of $X$;

$\bullet$ $B_{\Omega}$ --- the family of countable Baire (Borel
for a metrizable $X$) $\omega$-covers of $X$;

$\bullet$ $B_\Gamma$ --- the family of countable Baire (Borel for
a metrizable $X$) $\gamma$-covers of $X$.

\medskip In this paper we prove that if every finite power of $X$ is Lindel$\ddot{e}$of then
$C_p(X)$ to be strongly sequentially separable iff  $X$ is
$\gamma$-set ($X$ satisfies $S_1(\Omega,\Gamma)$).

\medskip In Section 4 we prove that the space $B_{\alpha}(X)$ of functions Baire class
$\alpha$  ($1<\alpha\leq\omega_1$) on a Tychonoff space $X$ with
the pointwise topology to be sequentially  separable iff there
 exists a Baire isomorphism class $\alpha$ from a space $X$ onto a
 $\sigma$-set. Also we get criterion of strongly sequentially
 separableness of a space $B_{\alpha}(X)$ ($0<\alpha\leq\omega_1$).

\section{Main definitions and notation}

 We will be denoted by

$\bullet$  $B_{\alpha}(X)$ a set of all functions of Baire class
$\alpha$ for $\alpha\in [0,\omega_1]$ defined
 on a Tychonoff space $X$, provided with the pointwise convergence topology.

In particular:

$\bullet$ $C_{p}(X)=B_0(X)$ a set of all real-valued continuous
functions $C(X)$ defined
 on a Tychonoff space $X$.

$\bullet$ $B_1(X)$ a set of all first Baire class
 functions $B_1(X)$ i.e.,  pointwise limits of continuous functions, defined
 on a Tychonoff space $X$.

$\bullet$ $B(X)=B_{\omega_1}(X)$ a  set of all Baire functions,
defined on a Tychonoff space $X$. If $X$ is metrizable space, then
$B(X)$ be called a space of Borel functions.

\medskip
Recall that the $i$-weight $iw(X)$ of a space $X$ is the smallest
infinite cardinal number $\tau$ such that $X$ can be mapped by a
one-to-one continuous mapping onto a space of the weight not
greater than $\tau$. It well-known that $iw(X)=d(B_{\alpha}(X))$
for $0\leq\alpha\leq \omega_1$ (Theorem 1 in \cite{ps}). In
particular, for every Tychonoff space $X$, $iw(X)=d(C_p(X))$
(\cite{nob}).

  If $X$ is a space and $A\subseteq X$, then the sequential closure of $A$,
 denoted by $[A]_{seq}$, is the set of all limits of sequences
 from $A$. A set $D\subseteq X$ is said to be sequentially dense
 if $X=[D]_{seq}$. If $D$ is a countable sequentially dense subset
 of $X$ then $X$ call sequentially separable space.

 Call $X$ strongly sequentially separable if $X$ is separable and
 every countable dense subset of $X$ is sequentially dense.

 We recall that a subset of $X$ that is the
 complete preimage of zero for a certain function from~$C(X)$ is called a zero-set.
A subset $O\subseteq X$  is called  a cozero-set (or functionally
open) of $X$ if $X\setminus O$ is a zero-set. If a set $Z=\cup_{i}
Z_i$ where $Z_i$ is a zero-set of $X$ for any $i\in \omega$ then
$Z$ is called a $Z_{\sigma}$-set of $X$. Note that if a space $X$
is a perfect normal space, then class of $Z_{\sigma}$-sets of $X$
coincides with class of $F_{\sigma}$-sets of $X$.

It is well known \cite{kur}, that $f\in B_1(X)$ iff $f^{-1}(G)$
--- $Z_{\sigma}$-set for any open set $G$ of real line
$\mathbb{R}$.

Recall that the cardinal $\mathfrak{p}$ is the smallest cardinal
so that there is a collection of $\mathfrak{p}$ many subsets of
the natural numbers with the strong finite intersection property
but no infinite pseudo-intersection. Note that $\omega_1 \leq
\mathfrak{p} \leq \mathfrak{c}$.

For $f,g\in \omega^{\omega}$, let $f\leq^{*} g$ if $f(n)\leq g(n)$
for all but finitely many $n$. $\mathfrak{b}$ is the minimal
cardinality of a $\leq^{*}$-unbounded subset of $\omega^{\omega}$.
A set $B\subset [\omega]^{\infty}$ is unbounded if the set of all
increasing enumerations of elements of $B$ is unbounded in
$\omega^{\omega}$, with respect to $\leq^{*}$. It follows that
$|B|\geq \mathfrak{b}$. A subset $S$ of the real line is called a
$Q$-set if each one of its subsets is a $G_{\delta}$. The cardinal
$\mathfrak{q}$ is the smallest cardinal so that for any $\kappa<
\mathfrak{q}$ there is a $Q$-set of size $\kappa$. (See \cite{do}
for more on small cardinals including $\mathfrak{p}$).

\medskip

Further we use the following theorems.

\begin{theorem} (\cite{vel}). \label{th31} A space $C_p(X)$ is
sequentially separable iff there
 exist  a condensation (one-to-one
continuous map) $f: X \mapsto Y$ from a space $X$ on a
 separable metric space $Y$, such that $f(U)$ --- $F_{\sigma}$-set
 of $Y$ for any cozero-set $U$ of $X$.
\end{theorem}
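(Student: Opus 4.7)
My plan is to split into two directions. For the forward direction, given a countable sequentially dense $D = \{f_n : n \in \omega\} \subseteq C_p(X)$, I would define the diagonal map $\phi : X \to \mathbb{R}^{\omega}$ by $\phi(x) = (f_n(x))_{n \in \omega}$. Since $D$ is dense in $C_p(X)$ and $X$ is Tychonoff, $D$ separates the points of $X$, so $\phi$ is a continuous injection onto a subspace $Y$ of $\mathbb{R}^{\omega}$, i.e., a condensation onto a separable metric space. For the image-of-cozero condition, I would take a cozero-set $U = \{x : g(x) > 0\}$ with $g \in C(X)$, $g \geq 0$, pick a subsequence $(f_{n_k})$ with $f_{n_k} \to g$ pointwise, and note that $g(x) > 0$ iff there exist $m, K \in \omega$ with $f_{n_k}(x) \geq 1/m$ for all $k \geq K$. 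Since injectivity of $\phi$ makes it commute with intersections, this yields
\[
\phi(U) = \bigcup_{m,K \in \omega} \bigcap_{k \geq K} \{y \in Y : \pi_{n_k}(y) \geq 1/m\},
\]
a countable union of closed subsets of $Y$, hence $F_\sigma$.

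For the reverse direction, starting from a condensation $\phi : X \to Y$ with the $F_\sigma$-image property, I would first observe that for every $h \in C(X)$ the function $h \circ \phi^{-1} : Y \to \mathbb{R}$ belongs to $B_1(Y)$: for any open $G \subseteq \mathbb{R}$, $h^{-1}(G)$ is cozero in $X$, so $(h \circ \phi^{-1})^{-1}(G) = \phi(h^{-1}(G))$ is $F_\sigma = Z_\sigma$ in the metrizable space $Y$, and the Kuratowski criterion quoted in the introduction identifies $h \circ \phi^{-1}$ as Baire class~$1$. I would then produce a countable $D_Y \subseteq C(Y)$ whose pointwise sequential closure contains all of $B_1(Y)$ and set $D := \{d \circ \phi : d \in D_Y\} \subseteq C_p(X)$; given any $h \in C(X)$, a sequence $d_k \to h \circ \phi^{-1}$ in $D_Y$ pulls back to a sequence $d_k \circ \phi \to h$ pointwise on $X$, witnessing the sequential density of $D$.

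The step I expect to be the main obstacle is the construction of such a $D_Y$ with $B_1(Y) \subseteq [D_Y]_{seq}$ rather than the weaker $C(Y) \subseteq [D_Y]_{seq}$. Merely invoking sequential separability of $C_p(Y)$ is not enough: naively iterating --- approximating each continuous $g_n \to h \circ \phi^{-1}$ by a sequence from $D_Y$ and then diagonalising --- fails, because a double pointwise limit on an uncountable $Y$ does not admit diagonal extraction in the non-metrizable topology. My plan here is to exploit the second countability of $Y$: embed $Y$ in the Hilbert cube, take $D_Y$ to be the restrictions of a canonical countable family of continuous functions (for instance, rational polynomials in finitely many coordinates, or a refining family of partition-of-unity sums attached to a countable base of $Y$), and verify directly --- by an interleaving of uniform approximation on an exhausting sequence of bounded sets with pointwise control from the $F_\sigma$-structure of preimages --- that every Baire-$1$ function on $Y$ is the pointwise limit of a single sequence from $D_Y$. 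This verification carries the technical weight of the proof; once it is in hand, the rest of the reverse direction is a routine pull-back through $\phi$.
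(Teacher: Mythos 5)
The paper offers no proof of this theorem at all --- it is quoted from Velichko \cite{vel} --- so there is nothing internal to compare against; I can only assess your argument on its own terms. Your forward direction is complete and correct: the diagonal map $\phi(x)=(f_n(x))_{n\in\omega}$ is a condensation because a dense subset of $C_p(X)$ separates the points of a Tychonoff space, and the identity $\phi(U)=\bigcup_{m,K}\bigcap_{k\ge K}\{y\in Y:\pi_{n_k}(y)\ge 1/m\}$ is legitimate since $f_{n_k}=\pi_{n_k}\circ\phi$ and $\phi$ is a bijection onto $Y$. The skeleton of your reverse direction is also right: $h\circ\phi^{-1}\in B_1(Y)$ by the Lebesgue--Hausdorff/Kuratowski criterion, and pulling back a sequence from a suitable countable $D_Y$ through $\phi$ is routine.

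The genuine gap is exactly where you flag it: the existence of a countable $D_Y\subseteq C(Y)$ with $B_1(Y)\subseteq[D_Y]_{seq}$. This is a real theorem, not a verification one can leave to ``interleaving'' --- and the concrete route you sketch does not work as stated. Restrictions of rational polynomials in finitely many coordinates of the Hilbert cube are uniformly dense only in the uniformly continuous bounded functions on $Y$; for non-compact $Y$ the space $C(Y)$ is not separable in the uniform metric, so you cannot reach every continuous (let alone every Baire-$1$) function by the ``uniform approximation then pointwise limit'' scheme, and the naive double-limit diagonalisation is blocked for precisely the reason you yourself give. The correct construction (Velichko's) takes rational linear combinations of bump functions subordinate to a countable base and uses the Lebesgue--Hausdorff representation of Baire-$1$ functions by ambiguous-class-$1$ step functions to produce a single converging sequence. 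Note that the paper hands you this lemma for free: it is the strengthened form of Theorem~\ref{th32} recorded in the remark immediately after it (``there exists a countable subset $S\subset C(X)$ such that $[S]_{seq}=B_1(X)$''). Citing that remark closes your argument; attempting to prove it by the Hilbert-cube device does not.
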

\medskip

\begin{theorem} (\cite{vel}). \label{th32} A space $B_1(X)$ is
sequentially separable for any separable metric space $X$.
\end{theorem}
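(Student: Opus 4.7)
The plan is to exhibit a countable sequentially dense subset of $B_1(X)$ constructed from the separable metric structure of $X$. Since $X$ is separable metric, $iw(X)=\aleph_0$, and Theorem~1 of \cite{ps} then gives $d(B_1(X))=\aleph_0$, so $B_1(X)$ is at least separable; the real task is to upgrade separability to sequential separability by picking the dense set carefully.

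Fix a countable base $\{U_n:n\in\omega\}$ of $X$ and let $\mathcal{A}$ denote the countable Boolean algebra of subsets of $X$ generated by $\{U_n\}$. Every $A\in\mathcal{A}$ is a finite Boolean combination of open sets in the metric space $X$, hence both $A$ and $X\setminus A$ are $F_\sigma$, so $\chi_A\in B_1(X)$. Let $D$ be the countable set of finite rational linear combinations of such $\chi_A$, $A\in\mathcal{A}$; then $D\subseteq B_1(X)$.

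To show $D$ is sequentially dense, take $f\in B_1(X)$. After a routine reduction to the bounded case by truncation $f\mapsto\max(-n,\min(n,f))$, approximate $f$ uniformly to within $1/n$ by a simple function $s_n=\sum_k a_{n,k}\chi_{E_{n,k}}$, where $E_{n,k}=f^{-1}([a_{n,k},a_{n,k+1}))$. Each $E_{n,k}$ is $F_\sigma$ because $f\in B_1(X)$, and in a separable metric space every $F_\sigma$ set can be written as a monotone increasing union $E_{n,k}=\bigcup_m A_{n,k,m}$ with $A_{n,k,m}\in\mathcal{A}$ (using that every closed set is $G_\delta$, every open set is a countable union of base elements, and an appropriate diagonal truncation). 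Hence $\chi_{E_{n,k}}$ is the monotone pointwise limit of $\chi_{A_{n,k,m}}$, and substituting in $s_n$ yields pointwise approximation of $s_n$, and hence of $f$, by elements of $D$.

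The main obstacle is the final diagonal extraction of a single sequence $d_n\in D$ converging to $f$ pointwise. The difficulty is the mismatch of regimes: the outer approximation $s_n\to f$ is uniform with rate $1/n$, but the inner approximation $\chi_{A_{n,k,m}}\to\chi_{E_{n,k}}$ is only pointwise, on a possibly uncountable $X$, so naive diagonalization fails. The rescue is that the inner approximating sequences are \emph{monotone} and $\{0,1\}$-valued, and the sets $A_{n,k,m}$ can be arranged with compatibility across $n$ and $k$ so that for each fixed $x\in X$ the inner stabilization occurs in finitely many steps. Matching the inner stabilization with the uniform outer rate by a careful choice of $m=m(n)$ then produces the required sequence $d_n\in D$ with $d_n(x)\to f(x)$ for every $x\in X$. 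This diagonal bookkeeping, which genuinely uses the countable base of $X$, is the step I expect to require the most care to make rigorous.
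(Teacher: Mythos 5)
The paper does not actually prove this statement: it is quoted from Velichko \cite{vel}, with only the remark that Velichko's construction yields a countable sequentially dense set lying inside $C(X)$. So your attempt must stand on its own, and it has two genuine gaps.

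First, your claim that every $F_\sigma$ set $E_{n,k}$ is a monotone increasing union $\bigcup_m A_{n,k,m}$ with $A_{n,k,m}$ in the countable algebra $\mathcal{A}$ generated by a countable base is false. If the union equals $E_{n,k}$, then each $A_{n,k,m}\subseteq E_{n,k}$; but take $X=[0,1]$, $\mathcal{A}$ generated by the rational intervals, and $E=\{x\}$ with $x$ irrational (a closed, hence $F_\sigma$, level set that certainly occurs for some $f\in B_1(X)$): the only member of $\mathcal{A}$ contained in $E$ is $\emptyset$. No countable algebra can contain, from inside, cofinal approximations of all singletons of an uncountable space. At best one gets non-monotone pointwise approximation of $\chi_{E}$ (e.g., from outside on the $G_\delta$ side), and then the monotone-stabilization rescue in your last paragraph evaporates.

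Second, the final diagonalization is not bookkeeping: you are passing from ``$f$ is a uniform limit of $s_n$, each $s_n$ a pointwise limit of members of $D$'' to ``$f$ is a pointwise limit of members of $D$,'' i.e., asserting that $[D]_{seq}$ is uniformly closed. Sequential closure is not idempotent (iterating it is exactly what generates the higher Baire classes), so this needs a real argument. Concretely, for fixed $x$ let $\psi_x(n)$ be the least $m$ at which the inner approximation at level $n$ stabilizes at $x$; your scheme requires a single $m(\cdot)$ with $\psi_x\le^* m$ for \emph{every} $x\in X$ simultaneously, i.e., that $\{\psi_x: x\in X\}$ be bounded in $(\omega^\omega,\le^*)$. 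Nothing in the construction forces this, and for $|X|\ge\mathfrak{b}$ it can fail. The standard repair (essentially Velichko's) is the summable-norm telescoping trick: write $f=\sum_n u_n$ with each $u_n$ in the relevant class and $\|u_n\|_\infty\le 2^{-n}$, choose approximants $u_{n,m}$ from the countable set with the \emph{same} bound $\|u_{n,m}\|_\infty\le 2^{-n}$, and put $d_m=\sum_{n\le m}u_{n,m}$; then the tail is controlled uniformly by $\sum_{n>N}2^{-n+1}$ and only finitely many head terms must stabilize, so $d_m\to f$ pointwise. Using Urysohn functions squeezed between the closed ($F_\sigma$-side) and open ($G_\delta$-side) approximations of the level sets, rather than characteristic functions of algebra elements, fixes the first gap at the same time and places the sequentially dense set inside $C(X)$, as the paper notes.
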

\medskip

Note that proof of this theorem  gives more, namely there exists a
countable subset $S\subset C(X)$, such that $[S]_{seq}=B_1(X)$.

   By a {\it set of reals} we usually mean a zero-dimensional, separable
   metrizable space.

\section{Continuous functions}

Recall that $X$ has the property $\gamma$: for any open
$\omega$-cover $\alpha$ of $X$ there is a sequence $\beta \subset
\alpha$ such that $\beta$ is a $\gamma$-cover of $X$.

Recall that $X$ has projectively property $(\gamma)$, if every
continuous second countable image of $X$ has the property $\gamma$
\cite{bcm}.

Note that $S_1(\Omega, \Gamma)$ is equivalent to the
$\gamma$-property introduced by Gerlits and Nagy in \cite{gn}. So
we will call projectively $S_1(\Omega, \Gamma)$ instead of
projectively $(\gamma)$.

In (\cite{bcm}, Theorem 54), M. Bonanzinga, F. Cammaroto, M.
 Matveev proved

 \begin{theorem}\label{th21}  The following conditions are equivalent for a
space $X$:

\begin{enumerate}

\item $X$ $\models$ projective $S_{1}(\Omega,\Gamma)$;

\item every Lindel\"{o}f image of $X$ has property $(\gamma)$;

\item for every continuous mapping $f:X \mapsto
\mathbb{R}^{\omega}$, $f(X)$ $\models$ $S_{1}(\Omega, \Gamma)$;

\item for every continuous mapping $f:X \mapsto \mathbb{R}$,
$f(X)$ $\models$ $S_{1}(\Omega, \Gamma)$;

\item for every countable $\omega$-cover $\mathcal{U}$ of $X$ by
cozero sets, one can pick $U_n\in \mathcal{U}$ so that every $x\in
X$ is contained in all but finitely many $U_n$;

\item $X$ $\models$ $S_{1}(\Omega_{cz}^{\omega},\Gamma)$.

\end{enumerate}

\end{theorem}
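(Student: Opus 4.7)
The plan is to close the cycle (1) $\Rightarrow$ (3) $\Rightarrow$ (4) $\Rightarrow$ (6) $\Rightarrow$ (1), and separately weave in (2) and the single-cover formulation (5). The equivalence (1) $\Leftrightarrow$ (3) is formal: every continuous image of $X$ in $\mathbb{R}^{\omega}$ is separable metrizable, hence second countable, so (1) $\Rightarrow$ (3) is immediate; conversely, every second countable Tychonoff space embeds in $\mathbb{R}^{\omega}$, so a continuous map $X \to Y$ with $Y$ second countable factors through $\mathbb{R}^{\omega}$, yielding (3) $\Rightarrow$ (1). The implication (3) $\Rightarrow$ (4) is trivial because $\mathbb{R}$ is a coordinate subspace of $\mathbb{R}^{\omega}$, and the equivalence (5) $\Leftrightarrow$ (6) is the standard Gerlits--Nagy passage between the single-$\omega$-cover and the sequence-of-$\omega$-covers formulations of the $\gamma$-selection principle, restricted to the class of countable cozero $\omega$-covers.

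For (6) $\Rightarrow$ (1), I would take a continuous $h: X \to Y$ with $Y$ second countable and a sequence $(\mathcal{V}_n)$ of open $\omega$-covers of $Y$. Since second countable regular spaces are metrizable, hence perfectly normal, every open set of $Y$ is a cozero set; a standard base-refinement argument then produces countable $\omega$-subcovers $\mathcal{V}_n' \subseteq \mathcal{V}_n$ consisting of cozero sets. Pulling back through $h$ yields countable cozero $\omega$-covers $\{h^{-1}(V) : V \in \mathcal{V}_n'\}$ of $X$; applying (6) produces $V_n \in \mathcal{V}_n'$ such that $\{h^{-1}(V_n)\}$ is a $\gamma$-cover of $X$, and pushing forward via $h$ shows that $\{V_n\}$ is a $\gamma$-cover of $h(X)=Y$.

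The clause (2) is absorbed into the loop by the same template: (2) $\Rightarrow$ (4) is immediate because every continuous image of $X$ in $\mathbb{R}$ is separable metrizable, hence Lindel\"{o}f; and (1) $\Rightarrow$ (2) is obtained by taking a Lindel\"{o}f continuous image $L$ of $X$, refining any open $\omega$-cover of $L$ to a countable cozero $\omega$-cover using Lindel\"{o}fness combined with complete regularity, applying (6) (equivalent to (1)) to the pull-back in $X$, and lifting the resulting selection back to $L$.

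The remaining implication (4) $\Rightarrow$ (6) is the principal obstacle. The difficulty is that (4) supplies a $\gamma$-type selection only for continuous images of $X$ in $\mathbb{R}$, whereas a countable cozero $\omega$-cover $\{U_n\}$ with witness functions $f_n: X \to [0,1]$ and $U_n = f_n^{-1}((0,1])$ naturally sits inside the diagonal map $F = (f_n): X \to \mathbb{R}^{\omega}$ rather than inside a single real-valued map. The plan is to replace this $\mathbb{R}^{\omega}$-valued map by a single continuous $F: X \to \mathbb{R}$, built from the $f_n$ via a carefully chosen encoding, so that the cozero structure of $F(X) \subseteq \mathbb{R}$ still remembers the sets $U_n$; applying the $S_1(\Omega,\Gamma)$ property supplied by (4) to $F(X)$ with a tailored sequence of $\omega$-covers on the image side should then yield a selection that translates back into a $\gamma$-subsequence of $\{U_n\}$. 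Designing this encoding so that $\omega$-covers on the $\mathbb{R}$-side faithfully correspond to the original cozero sets on $X$ is the main technical step, and it is the one I expect to require the most care.
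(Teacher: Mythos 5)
The paper itself offers no proof of this statement---it is quoted verbatim from Bonanzinga, Cammaroto and Matveev (Theorem 54 of the cited paper)---so your attempt has to stand on its own. The routine arcs of your cycle, $(1)\Leftrightarrow(3)$, $(3)\Rightarrow(4)$, $(2)\Rightarrow(4)$, $(5)\Leftrightarrow(6)$ and $(6)\Rightarrow(1)$, are fine, but the two implications carrying all the content are not established. For $(4)\Rightarrow(6)$ you correctly isolate the crux (encoding the witnesses $f_n$ of a countable cozero $\omega$-cover into a single real-valued map) but leave it as a plan, and the plan is missing the idea that makes it work: a single continuous $F\colon X\to\mathbb{R}$ cannot in general remember countably many independent cozero sets as preimages of open subsets of $F(X)$. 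The missing step is to first deduce from $(4)$ that every continuous real-valued image of $X$ is a $\gamma$-subset of $\mathbb{R}$, hence of strong measure zero and so containing no interval; this lets you shrink each $U_n=f_n^{-1}(\mathbb{R}\setminus\{0\})$ to clopen sets of the form $f_n^{-1}(\{s:|s|>t\})$ with $t\notin f_n(X)$, refine the given countable cozero $\omega$-cover by a clopen one, and only then does the Marczewski characteristic map $x\mapsto\sum_n 2\cdot 3^{-n}\chi_{U_n}(x)$ into the Cantor set give a continuous real-valued image on which the $U_n$ become preimages of relatively clopen sets, so that $(4)$ can be applied. Without the reduction to clopen sets the encoding step simply fails.

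More seriously, your proof of $(1)\Rightarrow(2)$ rests on a false claim: that Lindel\"{o}fness plus complete regularity allows one to refine an arbitrary open $\omega$-cover of $L$ by a countable cozero $\omega$-cover. If every open $\omega$-cover of $L$ admitted a countable open $\omega$-cover refinement, then every finite power of $L$ would be Lindel\"{o}f (the standard Gerlits--Nagy argument: $L^n$ is covered by the $n$-th powers of the members of the countable refinement, each of which is finitely covered by the original cover of $L^n$). The Sorgenfrey line is Lindel\"{o}f and perfectly normal, so every open set in it is cozero, yet its square is not Lindel\"{o}f; hence it carries an open $\omega$-cover with no countable $\omega$-cover refinement, and your reduction step is not available for general Lindel\"{o}f $L$. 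The genuine content of $(1)\Rightarrow(2)$ is precisely to show that a Lindel\"{o}f image of a projectively $S_1(\Omega,\Gamma)$ space is an $\epsilon$-space (all finite powers Lindel\"{o}f), and that argument must invoke the projective hypothesis; it is entirely absent from your sketch.
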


\medskip

In \cite{osszts}, authors was proved

\begin{theorem}\label{th3} For a Tychonoff space $X$ the following statements are
equivalent:

\begin{enumerate}

\item $C_p(X)$ is strongly sequentially separable;

\item $X$ $\models$ projective $S_{1}(\Omega,\Gamma)$ and
$iw(X)=\aleph_0$.

\end{enumerate}

\end{theorem}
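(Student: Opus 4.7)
The plan is to split the biconditional in two, and to separate off the $iw$-clause using the identity $iw(X)=d(C_{p}(X))$ cited in the introduction: strong sequential separability of $C_{p}(X)$ forces separability, hence $iw(X)=\aleph_{0}$, while conversely $iw(X)=\aleph_{0}$ supplies a countable dense $D_{0}\subset C_{p}(X)$ to work with. The real content is the exchange between "every countable dense subset of $C_{p}(X)$ is sequentially dense" and "$X$ satisfies projective $S_{1}(\Omega,\Gamma)$", and for this I would lean heavily on the equivalent reformulations in Theorem \ref{th21}.

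For the direction (2)$\Rightarrow$(1), I would use clause (6) of Theorem \ref{th21}, which identifies projective $S_{1}(\Omega,\Gamma)$ with $S_{1}(\Omega_{cz}^{\omega},\Gamma)$. Given a countable dense $D=\{d_{n}:n\in\omega\}\subset C_{p}(X)$ and a target $f\in C_{p}(X)$, for each $k\in\omega$ form
\[
\mathcal{U}_{k}=\{U_{n,k}:n\in\omega\},\qquad U_{n,k}=\{x\in X:|d_{n}(x)-f(x)|<1/k\}.
\]
Each $U_{n,k}$ is cozero, witnessed by the continuous function $x\mapsto\max\{0,1/k-|d_{n}(x)-f(x)|\}$, and $\mathcal{U}_{k}$ is a countable $\omega$-cover of $X$ because $D$ being dense in $C_{p}(X)$ means any finite $F\subset X$ can be approximated within $1/k$ by some $d_{n}$. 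Applying $S_{1}(\Omega_{cz}^{\omega},\Gamma)$ to the sequence $(\mathcal{U}_{k})_{k}$ yields $U_{n_{k},k}\in\mathcal{U}_{k}$ whose union is a $\gamma$-cover, i.e.\ for each $x\in X$ one has $|d_{n_{k}}(x)-f(x)|<1/k$ for all but finitely many $k$. Thus $d_{n_{k}}\to f$ pointwise and $D$ is sequentially dense, giving $f\in[D]_{\mathrm{seq}}$.

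For the harder direction (1)$\Rightarrow$(2), I would verify clause (5) of Theorem \ref{th21}: every countable cozero $\omega$-cover $\mathcal{U}=\{U_{n}:n\in\omega\}$ of $X$ has a $\gamma$-subcover. Since each $X\setminus U_{n}$ is a zero-set, pick continuous $\phi_{n}:X\to[0,1]$ with $\phi_{n}(X\setminus U_{n})=\{1\}$ and $\phi_{n}(x)<1$ precisely when $x\in U_{n}$. Starting from a countable dense $D_{0}\subset C_{p}(X)$, adjoin the $\phi_{n}$'s together with sufficient algebraic perturbations (rational multiples, pointwise products with members of $D_{0}$, and/or additive shifts) to form a countable dense $D\subset C_{p}(X)$ designed so that any sequence in $D$ converging pointwise to a distinguished target (say the constant $0$) must, after passing to a subsequence, come from the $\phi_{n}$'s rather than from the "padding". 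Strong sequential separability applied to $D$ extracts $\phi_{n_{k}}\to 0$ pointwise; since $\phi_{n_{k}}(x)<1$ iff $x\in U_{n_{k}}$, each $x$ lies in $U_{n_{k}}$ for all large $k$, so $\{U_{n_{k}}:k\in\omega\}$ is a $\gamma$-subcover (necessarily infinite, as no single $U_{n}$ equals $X$).

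The main obstacle is precisely the bookkeeping in the construction of $D$ above: it must be dense in $C_{p}(X)$, which essentially forces keeping an honest countable dense subset inside it, yet rigid enough that sequential approximation to the chosen target is funnelled through the cover-encoding functions $\phi_{n}$ rather than through the dense "background". The standard device is to match the target with the padding so that sequences built from the padding alone are prevented from reaching it—for instance, by translating $D_{0}$ away from $0$ on a fixed witness point, or by using multiplicative rather than additive combinations. Once this calibration is in place, everything else—cozero-ness of the $U_{n,k}$, the passage between clauses (5) and (6) of Theorem \ref{th21}, and the translation between pointwise convergence of functions and $\gamma$-covering—is routine.
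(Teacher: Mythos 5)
The paper itself gives no proof of Theorem~\ref{th3} --- it is quoted from \cite{osszts} --- so your attempt can only be judged on its own terms. Your direction $(2)\Rightarrow(1)$ is correct and is the standard argument: the $\mathcal{U}_k$ are countable cozero $\omega$-covers (modulo the harmless degenerate case where some $U_{n,k}=X$, which gives uniform convergence of a subsequence directly), and clause (6) of Theorem~\ref{th21} finishes it; the reduction of the $iw$-clause via $d(C_p(X))=iw(X)$ is also fine.

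The gap is in $(1)\Rightarrow(2)$, exactly where you flag it, and the particular ``funnelling'' scheme you describe cannot be repaired: you want a countable dense $D$ such that every sequence from $D$ converging to the target passes (after a subsequence) through the functions $\phi_n$. But any countable dense $D$ containing the $\phi_n$'s must have dense ``padding'' $D\setminus\{\phi_n: n\in\omega\}$ (the $\phi_n$ take values in $[0,1]$, so the padding must already be dense in the dense open set $\{f:\ f(x)>1 \text{ for some } x\}$, hence dense in $C_p(X)$), and strong sequential separability applied to the padding \emph{alone} then produces a sequence converging to the target that uses no $\phi_n$ whatsoever. No additive recalibration avoids this. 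The correct repair is the multiplicative variant you mention only in passing: \emph{every} element of $D$ must carry a cover-index, and one reads off the indices instead of isolating the $\phi_n$'s. Concretely, for each $n$ fix $\psi_n\in C(X,[0,1])$ with $\psi_n^{-1}(0)=X\setminus U_n$, set $D=\{d\cdot\psi_n:\ d\in D_0,\ n\in\omega\}$, and take the target $\mathbf{1}$. Then $D$ is dense (given finite $F$, $h\in C(X)$ and $\epsilon>0$, choose $n$ with $F\subset U_n$ and $d\in D_0$ with $d(x)$ within $\epsilon$ of $h(x)/\psi_n(x)$ for $x\in F$; since $\psi_n\leq 1$ this puts $d\psi_n$ within $\epsilon$ of $h$ on $F$). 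A sequence $d_k\psi_{n_k}\to\mathbf{1}$ forces, for each $x$, $\psi_{n_k}(x)>0$ eventually, i.e. $x\in U_{n_k}$ for all but finitely many $k$; and the $U_{n_k}$ take infinitely many distinct values since $X\notin\mathcal{U}$. This verifies clause (5) of Theorem~\ref{th21} and, with your $(2)\Rightarrow(1)$, completes the proof.
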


By Proposition 55 in \cite{glm}, we have that

$\bullet$ every projectively $(\gamma)$ space is zero-dimensional;

$\bullet$ every space of cardinality less than $\mathfrak{p}$ is
projectively $(\gamma)$;

$\bullet$ the projectively $(\gamma)$ property is preserved by
continuous images.

Then we have the next

\begin{proposition} Let $X$ be a Tychonoff space with
$iw(X)=\aleph_0$.

$\bullet$ If $C_p(X)$ is strongly sequentially separable, then $X$
is zero-dimensional.

$\bullet$ If cardinality of $X$ less than $\mathfrak{p}$, then
$C_p(X)$ is strongly sequentially separable.

$\bullet$ If $C_p(X)$ is strongly sequentially separable and
$h:X\mapsto Y$ is continuous mapping from $X$ onto a space $Y$
with $iw(Y)=\aleph_0$, then $C_p(Y)$ is also strongly sequentially
separable.

\end{proposition}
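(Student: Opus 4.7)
The plan is to reduce each of the three bullets to a direct combination of Theorem \ref{th3} with one of the three properties of projectively $(\gamma)$ listed just above the proposition (taken from Proposition~55 in \cite{glm}), using the fact already noted in the excerpt that projective $S_{1}(\Omega,\Gamma)$ coincides with projective $(\gamma)$.

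For the first bullet, I start from the hypothesis that $C_p(X)$ is strongly sequentially separable. By Theorem \ref{th3}, this gives $X \models$ projective $S_{1}(\Omega,\Gamma)$, i.e.\ $X$ is projectively $(\gamma)$. Then the first item from Proposition~55 of \cite{glm} immediately yields that $X$ is zero-dimensional. For the second bullet, assume $|X|<\mathfrak{p}$. The second item of Proposition~55 says every such space is projectively $(\gamma)$, so $X \models$ projective $S_{1}(\Omega,\Gamma)$; combining with the standing assumption $iw(X)=\aleph_{0}$ and applying the reverse direction of Theorem \ref{th3}, we conclude that $C_p(X)$ is strongly sequentially separable.

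For the third bullet, assume $C_p(X)$ is strongly sequentially separable and $h:X\to Y$ is a continuous surjection with $iw(Y)=\aleph_{0}$. Again by Theorem \ref{th3}, $X$ is projectively $(\gamma)$. The third item of Proposition~55 from \cite{glm} states that projective $(\gamma)$ is preserved under continuous images, so $Y$ is projectively $(\gamma)$, equivalently $Y \models$ projective $S_{1}(\Omega,\Gamma)$. Together with $iw(Y)=\aleph_{0}$, the reverse direction of Theorem \ref{th3} then gives that $C_p(Y)$ is strongly sequentially separable.

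There is essentially no obstacle: each bullet is a one-line consequence of Theorem \ref{th3} together with the corresponding item in the list of properties of projectively $(\gamma)$ spaces recalled from \cite{glm}. The only thing to double-check is the identification of projective $(\gamma)$ with projective $S_{1}(\Omega,\Gamma)$, which is exactly the remark made in the paragraph preceding Theorem \ref{th21}; the chain of equivalences in Theorem \ref{th21} itself is not needed here.
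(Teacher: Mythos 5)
Your proof is correct and is exactly the argument the paper intends: the proposition is stated immediately after Theorem \ref{th3} and the three quoted items of Proposition~55 from \cite{glm} precisely so that each bullet follows by combining them, which is what you do. No issues.
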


\begin{proposition} Suppose a space $X$ $\models$ projective $S_{1}(\Omega,\Gamma)$,
and $F\subseteq X$ be a $Z_{\sigma}$-set of $X$. Then $F$
$\models$ projective $S_{1}(\Omega,\Gamma)$.

\end{proposition}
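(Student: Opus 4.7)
The plan is to verify the cozero-cover characterization of projective $S_{1}(\Omega,\Gamma)$ from Theorem~\ref{th21}(6) for the subspace $F$, namely $F\models S_{1}(\Omega_{cz}^{\omega},\Gamma)$; the equivalence with condition (1) then yields the conclusion. So I start with a sequence $(\mathcal{U}_{n})_{n\in\omega}$ of countable cozero $\omega$-covers of $F$, written $\mathcal{U}_{n}=\{U_{n,k}:k\in\omega\}$, and aim to pick $U_{n,k_{n}}\in\mathcal{U}_{n}$ so that $\{U_{n,k_{n}}:n\in\omega\}$ is a $\gamma$-cover of $F$.

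First, using that $F$ is $z$-embedded in $X$ (a standard consequence of $F$ being a $Z_{\sigma}$-set of a Tychonoff space), I lift each $U_{n,k}=V_{n,k}\cap F$ with $V_{n,k}$ cozero in $X$. Write $F=\bigcup_{m\in\omega}Z_{m}$ with $Z_{m}=\varphi_{m}^{-1}(0)$; replacing $\varphi_{m}$ by $\min(|\varphi_{1}|,\ldots,|\varphi_{m}|)$ I may assume $Z_{m}\subseteq Z_{m+1}$. Then form the augmented family $\mathcal{V}_{n}:=\{V_{n,k}\cup(X\setminus Z_{n}):k\in\omega\}$ of cozero sets of $X$. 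This $\mathcal{V}_{n}$ is a countable cozero $\omega$-cover of $X$, since for any finite $A\subseteq X$ the piece $A\cap F$ lies in some $V_{n,k}$ (because $\mathcal{U}_{n}$ is an $\omega$-cover of $F$), while $A\setminus F\subseteq X\setminus Z_{n}$ (points outside $F$ miss every $Z_{m}$).

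Next I apply the hypothesis via Theorem~\ref{th21}(6): $X\models S_{1}(\Omega_{cz}^{\omega},\Gamma)$, so there exist indices $k_{n}$ with $\{V_{n,k_{n}}\cup(X\setminus Z_{n}):n\in\omega\}$ a $\gamma$-cover of $X$. To descend to $F$, fix $x\in F$; by monotonicity of $(Z_{m})$ there is $m_{x}$ with $x\in Z_{n}$ for all $n\ge m_{x}$, and for cofinitely many $n$ we have $x\in V_{n,k_{n}}\cup(X\setminus Z_{n})$; since $x\in Z_{n}$ rules out $x\in X\setminus Z_{n}$, this forces $x\in V_{n,k_{n}}$, i.e., $x\in U_{n,k_{n}}$. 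Hence $\{U_{n,k_{n}}\}$ is a $\gamma$-cover of $F$; it is automatically infinite because each $U_{n,k_{n}}$ is a proper subset of $F$ (otherwise $\mathcal{U}_{n}$ would contain $F$, contradicting its being an $\omega$-cover), so the sequence cannot be eventually constant while still covering $F$.

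The main obstacle is the $z$-embedding step at the outset: showing that every cozero set of $F$ extends to a cozero set of $X$. This is routine for $Z_{\sigma}$-subsets of Tychonoff spaces but is the crux of the argument; once granted, the remainder is a clean interplay of the $S_{1}$-selection on $X$ and the defining data of the zero-set decomposition of $F$.
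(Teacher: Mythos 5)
Your argument follows essentially the same route as the paper's proof: write $F=\bigcup_m Z_m$ with the zero-sets increasing, inflate each member of a countable cozero $\omega$-cover of $F$ by the cozero complement of one of the $Z_m$ to get a countable cozero $\omega$-cover of $X$, apply the characterization $X\models S_{1}(\Omega_{cz}^{\omega},\Gamma)$ from Theorem~\ref{th21}, and descend back to $F$. Your explicit appeal to $z$-embedding of a $Z_{\sigma}$-set to lift each $U_{n,k}$ to a cozero set $V_{n,k}$ of $X$ is a point the paper passes over silently, and it is a correct and worthwhile addition; your descent and the infinitude argument for the resulting $\gamma$-cover are also fine.

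One step does need a patch. You augment uniformly, setting $\mathcal{V}_{n}=\{V_{n,k}\cup(X\setminus Z_{n}):k\in\omega\}$, and assert this is an $\omega$-cover of $X$. But it can happen that $Z_{n}\subseteq V_{n,k}$ for some (even every) $k$, in which case $V_{n,k}\cup(X\setminus Z_{n})=X$; under the paper's definition of $\omega$-cover (which requires $X\notin\mathcal{U}$) the family $\mathcal{V}_{n}$ then fails to be an $\omega$-cover, and Theorem~\ref{th21}(5)--(6) cannot be invoked as stated. For instance, take $F$ countable discrete, $Z_{n}=\{0,\dots,n\}$ and $\mathcal{U}_{n}=\{F\setminus\{m\}:m>n\}$: every inflated member equals $X$. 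The paper avoids this by choosing the augmenting zero-set per member rather than per cover: for the $s$-th member of the $n$-th cover it picks $F_{i_{s}}$ with $i_{s}$ strictly increasing (starting at $i_{0}=n$) and $F_{i_{s}}\nsubseteq V^{n}_{s}$, which is possible because each member meets $F$ in a proper subset; this guarantees each inflated set is a proper subset of $X$, while the descent still works since any finite $P\subseteq F$ lies in some $F_{i'}$ and hence in $F_{i_{s(n)}}$ for all $n>i'$. Your own descent argument survives this modification verbatim, so the gap is local and easily repaired.
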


\begin{proof}

Let $\mathcal{V}_n=\{V^n_s : s\in \omega\}\in
\Omega_{cz}^{\omega}$ of $F$ for each $n\in \omega$, and
$F=\bigcup\limits_i F_i$, where $F_i$ is a zero-set of $X$ and
$F_{i}\subset F_{i+1}$ for each $i\in \omega$.

Consider $\mathcal{V}_1=\{V^1_s : s\in \omega\}$ and $i_0=1$.
 For each $s\in \omega$ there is
$F_{i_s}$ such that $F_{i_s}\nsubseteq V^1_s$ and $i_{s}>i_{s-1}$.
Let $\mathcal{W}_1=\{V^1_s\cup (X\setminus F_{i_s}) : s\in
\omega\}$.

Fix $n\in \omega\setminus\{1\}$ consider $\mathcal{V}_n=\{V^n_s
:s\in \omega\}$ and $i_0=n$.
 For each $s\in \omega$ there is
$F_{i_s}$ such that $F_{i_s}\nsubseteq V^n_s$ and $i_{s}>i_{s-1}$.
Let $\mathcal{W}_n=\{W^n_s:=V^n_s\cup (X\setminus F_{i_s}) :s\in
\omega\}$. Claim that $\mathcal{W}_n\in \Omega_{cz}^{\omega}$ of
$X$
 for each $n\in \omega$. Let $K$ be a finite subset of $X$. Since $X\setminus F\subset
W^n_s$ for $s\in \omega$ we can assume that $K\subset F$. There
exists $s\in \omega$ such that $K\subset V^n_s$. Then $K\subset
W^n_s$.

Since $X$ has the property $S_{1}(\Omega_{cz}^{\omega}, \Gamma)$
 there is the sequence $\mathcal{S}=\{W^n_{s(n)}=V^n_{s(n)}\cup (X\setminus F_{i_{s(n)}}) : n\in
\omega\}$ such that $W^n_{s(n)}\in \mathcal{W}_n$ and
$\mathcal{S}$ is a $\gamma$-cover of $X$.

We claim that  $\{V^n_{s(n)} : n\in \omega\}$ is a $\gamma$-cover
of $F$.

Let $P$ be a finite subset of $F$. There is $i'\in \omega$ such
that $P\subset F_{i'}$. Since $\mathcal{S}$ is a $\gamma$-cover of
$X$ there is $n'$ such that $n'>i'$ and $P\subset W^{n}_{s(n)}$
for each $n>n'$, hence, $P\subset V^{n}_{s(n)}$ for each $n>n'$.

\end{proof}

\begin{corollary} Let $C_p(X)$ be strongly sequentially
separable for a Tychonoff space $X$, and $F\subseteq X$ be a
$Z_{\sigma}$-set of $X$. Then $C_p(F)$ is strongly sequentially
separable.

\end{corollary}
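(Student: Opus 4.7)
The plan is to combine the preceding proposition with Theorem \ref{th3}. By Theorem \ref{th3}, $C_p(X)$ is strongly sequentially separable if and only if $X$ satisfies projective $S_{1}(\Omega,\Gamma)$ and $iw(X)=\aleph_0$. So it suffices to verify these two conditions for $F$ and then invoke Theorem \ref{th3} in the reverse direction.

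For the selection property, I would apply the proposition just proved: since $F$ is a $Z_\sigma$-set of $X$ and $X$ satisfies projective $S_{1}(\Omega,\Gamma)$, so does $F$. This step is essentially a direct citation.

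For the $i$-weight, I would argue as follows. Since $iw(X)=\aleph_0$, there is a condensation $f\colon X\to Y$ onto a separable metric space $Y$. The restriction $f|_F\colon F\to f(F)\subseteq Y$ is again a one-to-one continuous map, and $f(F)$ inherits a separable metrizable (hence second countable) topology from $Y$. Therefore $iw(F)\leq \aleph_0$, and in fact $iw(F)=\aleph_0$ (trivially, unless $F$ is finite, in which case $C_p(F)$ is separable metric and the conclusion is immediate).

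I do not anticipate a genuine obstacle here — the real content is in the previous proposition, and this corollary is a two-line consequence of it together with Theorem \ref{th3}. The only small point worth checking is that the restriction of a condensation to a subspace is still a condensation onto its image, and that second countability passes to subspaces; both are standard.
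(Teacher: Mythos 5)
Your proposal is correct and matches the paper's intent exactly: the corollary is stated without proof precisely because it follows by combining Theorem \ref{th3} with the preceding proposition on $Z_{\sigma}$-subsets, plus the routine observation that $iw(F)\le iw(X)$ via restricting a condensation. Nothing further is needed.
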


\begin{theorem}\label{tpr1} Let $X$ be a Tychonoff space,  $iw(X)=\aleph_0$ and $A\subseteq X$.
The space $(X\setminus A)\bigsqcup A$ $\models$ projective
$S_{1}(\Omega,\Gamma)$ iff a space $X$ $\models$ projective
$S_{1}(\Omega,\Gamma)$, and $A$ and $X\setminus A$ are
$Z_{\sigma}$-sets in $X$.
\end{theorem}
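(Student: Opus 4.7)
My plan is to derive both directions from Theorem~\ref{th3} and Theorem~\ref{th21}, exploiting the natural continuous bijection $\phi: Y \to X$, where $Y := (X\setminus A)\bigsqcup A$. Because $Y$'s topology refines that of $X$, we have $C_p(X) \subseteq C_p(Y) \subseteq \mathbb{R}^X$ and $X$ is a continuous image of $Y$ via $\phi$. For the forward direction, composing the condensation $h: X \to Z_0$ provided by $iw(X)=\aleph_0$ with $\phi$ yields $iw(Y) \le \aleph_0$; together with the hypothesis $Y \models$ projective $S_{1}(\Omega,\Gamma)$, Theorem~\ref{th3} implies that $C_p(Y)$ is strongly sequentially separable. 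The decisive observation is that $C_p(X)$ is dense in $\mathbb{R}^X$ (standard for Tychonoff $X$), and therefore dense in $C_p(Y)$, so any countable dense $D \subseteq C_p(X)$ (which exists since $d(C_p(X))=iw(X)=\aleph_0$) is countable dense --- and hence, by strong sequential separability, sequentially dense --- in $C_p(Y)$. Since $A$ is clopen in $Y$, $\chi_A \in C_p(Y)$, so $\chi_A$ is the pointwise limit of some sequence $(d_n) \subseteq D \subseteq C(X)$; consequently $\chi_A \in B_1(X)$. Kuratowski's criterion recalled in Section~2 then forces both $A = \chi_A^{-1}((1/2,3/2))$ and $X\setminus A = \chi_A^{-1}((-1/2,1/2))$ to be $Z_\sigma$-sets in $X$, and the projective $S_1(\Omega,\Gamma)$-property for $X$ follows from its preservation under the continuous image $\phi: Y \to X$.

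For the reverse direction, the Proposition immediately preceding the theorem applies to the $Z_\sigma$-sets $A$ and $X\setminus A$, showing that both satisfy projective $S_1(\Omega,\Gamma)$. To verify the same for $Y$, I would employ Theorem~\ref{th21}(3): it suffices to show that $f(Y)\models S_1(\Omega,\Gamma)$ for every continuous $f: Y \to \mathbb{R}^\omega$. Since $A$ and $X\setminus A$ are clopen in $Y$, $f$ restricts continuously to each, so Theorem~\ref{th21}(3) applied to $A$ and to $X\setminus A$ gives that $f(A)$ and $f(X\setminus A)$ are $\gamma$-sets in $\mathbb{R}^\omega$. Writing $f(Y) = f(A)\cup f(X\setminus A)$ and invoking the classical fact that a finite union of $\gamma$-sets is a $\gamma$-set yields $f(Y)\models S_1(\Omega,\Gamma)$.

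I expect the main obstacle to lie in the forward direction, specifically in recognising that $C_p(X)$ sits densely inside $C_p(Y)$ so that sequential approximation of $\chi_A \in C_p(Y)$ can be effected by $C(X)$-functions; once this is set up, Kuratowski's characterisation converts the pointwise limit into the required $Z_\sigma$ decomposition. The reverse direction is technically smoother but depends on the external closure of $\gamma$-sets under finite unions.
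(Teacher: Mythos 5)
Your forward direction is correct, and it takes a genuinely different route from the paper's. The paper adapts Galvin and Miller's combinatorial argument: it chooses functionally separated pairs $C_F,D_F$ from a countable cozero family witnessing $iw(X)=\aleph_0$, applies the countable $\gamma$-property of the sum, and reads off the $Z_{\sigma}$-decompositions of $A$ and $X\setminus A$ directly. You instead pass through the function space: Theorem~\ref{th3} makes $C_p((X\setminus A)\sqcup A)$ strongly sequentially separable, the density of $C_p(X)$ in $\mathbb{R}^X$ (hence in $C_p((X\setminus A)\sqcup A)$) lets you approximate $\chi_A$ pointwise by a sequence from a countable dense subset of $C_p(X)$, and Kuratowski's characterization of $B_1(X)$ converts $\chi_A\in B_1(X)$ into the required $Z_{\sigma}$-sets. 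That is a clean and valid alternative, arguably more conceptual than the paper's.

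The reverse direction, however, has a fatal gap: the ``classical fact that a finite union of $\gamma$-sets is a $\gamma$-set'' is not a fact. It is not a theorem of ZFC, and under CH it is provably false. Indeed, the corollary the paper deduces from Theorem~\ref{tpr1} (Galvin--Miller's Theorem~5) says that $(X\setminus A)\cup(A+1)$ is a $\gamma$-set \emph{only when} $A$ is both $F_{\sigma}$ and $G_{\delta}$ in $X$; since the open-cover $\gamma$-property is hereditary for subsets, $X\setminus A$ and $A+1$ are always $\gamma$-sets when $X$ is, so any $\gamma$-set that is not a $Q$-set (under CH there is a $\gamma$-set of size $\mathfrak{c}$, and no uncountable $Q$-sets) yields a union of two $\gamma$-sets that is not a $\gamma$-set. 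Your argument uses the $Z_{\sigma}$-placement of $A$ only to conclude that $A$ and $X\setminus A$ are projectively $S_{1}(\Omega,\Gamma)$ and then discards it; but for separable metrizable $X$ that conclusion holds for \emph{every} subset $A$ by heredity, so your union step would prove the left-hand side of the equivalence with no hypothesis on how $A$ sits in $X$ --- contradicting the forward direction you just established. The placement hypothesis has to do the work at exactly this point. The paper's $(2)\Rightarrow(1)$ shows how: writing $A=\bigcup_i F_i=\bigcap_j G_j$ with $F_i$ zero-sets and $G_j$ cozero-sets of $X$, it pads a given countable cozero $\omega$-cover $\{V_k\}$ of the sum to $\widetilde{V_k}=V_k\cup(G_k\setminus F_k)$, checks that $\{\widetilde{V_k}\}$ is a cozero $\omega$-cover of $X$, extracts a $\gamma$-subcover using the property of $X$, and verifies that the corresponding $V_{k_s}$ form a $\gamma$-cover of $(X\setminus A)\sqcup A$. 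Some such interleaving of the two pieces through a single application of the selection principle on $X$ is unavoidable; splitting into two separate applications and taking a union cannot work.
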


\begin{proof} $(1)\Rightarrow(2)$. This implication may be proved in much the same way as Theorem 5  in
\cite{gami}. Since $iw(X)=\aleph_0$ there are countable cozero
family $\gamma$ in $X$ such that for each $F\in [(X\setminus
A)\bigsqcup A]^{<\omega}$ there exists functionally separated (in
$X$) subsets $C_F$, $D_F\in \gamma$ such that $F\subset
C_F\bigsqcup D_F\subset(X\setminus A)\bigsqcup A$.

By the countable $\gamma$-property there exists $F_n$ for $n\in
\omega$ such that

 $(X\setminus A)\bigsqcup A\subset
\bigcup\limits_{n} \bigcap\limits_{m>n} (C_{F_m}\bigsqcup
D_{F_m})$.

Since $C_{F_n}$ and $D_{F_n}$ are functionally separated sets in
$X$ (i.e., there is $f_n\in C(X)$ such that $f_n^{-1}(0)\supseteq
C_{F_n}$ and $f_n^{-1}(1)\supseteq D_{F_n}$)

 $\bigcup\limits_{n}
\bigcap\limits_{m>n} f_m^{-1}(0)$ and $\bigcup\limits_{n}
\bigcap\limits_{m>n} f_m^{-1}(1)$ are disjoint, and they show that
$X\setminus A$ and $A$ are $Z_{\sigma}$-sets in $X$.

The mapping $id: (X\setminus A)\bigsqcup A \mapsto X$ is
continuous, hence, $X$ is a $\gamma$-set.

$(2)\Rightarrow(1)$. Consider a countable cozero $\omega$-cover
$\alpha=\{V_k\}$ of $(X\setminus A)\bigsqcup A$. Let
$A=\bigcup\limits_i F_i=\bigcap\limits_j G_j$ where $F_i$ is a
zero-set of $X$ and $G_j$ is an cozero-set of $X$ for each $i,j\in
\omega$. Denote $\widetilde{V_k}=V_k\cup (G_k\setminus F_k)$ for
each $k\in \omega$. Note that $\{\widetilde{V_k}\}$ is a countable
cozero $\omega$-cover of $X$. There is a sequence
$\beta=\{\widetilde{V_{k_s}}\} \subset \{\widetilde{V_k}\}$ such
that $\beta$ is a $\gamma$-cover of $X$. It follows that
$\{V_{k_s}\}$ is a $\gamma$-cover of $(X\setminus A)\bigsqcup A$.

\end{proof}

\medskip
For $X\subseteq [0,1]$ let $X+1=\{x+1 : x\in X\}$.

\medskip

\begin{corollary}(Theorem 5 in \cite{gami}) \label{pr1} Suppose $A\subseteq X\subseteq [0,1]$. A space
$(X\setminus A)\cup (A+1)$ is a $\gamma$-set iff the space $X$ is
a $\gamma$-set and $A$ is $G_{\delta}$ and $F_{\sigma}$ in $X$.
\end{corollary}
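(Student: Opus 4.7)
The plan is to deduce the corollary as a direct specialization of Theorem \ref{tpr1} to the second countable metrizable setting, so only a careful translation of the conditions is required.

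First I would identify the topology on the set $(X\setminus A)\cup(A+1)\subseteq [0,2]$. The shift by $1$ places $A+1$ in $[1,2]$ while $X\setminus A$ stays in $[0,1]$, so (modulo the single potential contact point $1$, which can be avoided by a trivial endpoint adjustment) each piece is clopen in the union. Hence $(X\setminus A)\cup(A+1)$ is homeomorphic to the topological disjoint sum $(X\setminus A)\bigsqcup A$ appearing in Theorem \ref{tpr1}. Since $X\subseteq [0,1]$ is separable metrizable, $iw(X)=\aleph_0$, so the hypothesis of Theorem \ref{tpr1} is in force.

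Next I would match the two sets of conditions. Because $X$ is metrizable (hence perfectly normal), the classes of $Z_\sigma$-sets and $F_\sigma$-sets in $X$ coincide, so the condition that $A$ and $X\setminus A$ are both $Z_\sigma$ in $X$ is equivalent to $A$ being simultaneously $F_\sigma$ and $G_\delta$ in $X$ (the complement of an $F_\sigma$ is a $G_\delta$). Moreover, for a separable metrizable space $Y$, being a $\gamma$-set (i.e.\ satisfying $S_1(\Omega,\Gamma)$) is equivalent to satisfying projective $S_1(\Omega,\Gamma)$: the identity map shows that projective $S_1(\Omega,\Gamma)$ implies $S_1(\Omega,\Gamma)$, while conversely open $\omega$-covers pull back and $\gamma$-covers push forward along any continuous surjection onto a second countable target. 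Both $X$ and $(X\setminus A)\bigsqcup A$ are separable metrizable, so this reformulation applies on each side.

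Substituting these equivalences into Theorem \ref{tpr1} immediately produces the statement of the corollary. The only substantive work is the clean identification of $(X\setminus A)\cup(A+1)$ with the topological sum and the metrizable translation between $Z_\sigma$ and $F_\sigma$; the essential combinatorial content already lives in Theorem \ref{tpr1}, so I do not anticipate any additional obstacle here.
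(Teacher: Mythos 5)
Your derivation is correct and is essentially the paper's own route: Corollary \ref{pr1} is stated without proof as an immediate specialization of Theorem \ref{tpr1}, using exactly the translations you give (identification of $(X\setminus A)\cup(A+1)$ with the disjoint sum, $Z_\sigma$-sets $=$ $F_\sigma$-sets in metrizable spaces, and the coincidence of projective $S_1(\Omega,\Gamma)$ with $S_1(\Omega,\Gamma)$ for second countable spaces). The only loose end is the possible topological contact of the two pieces at the point $1$, but since a single point affects neither the $\gamma$-property nor the $F_\sigma$/$G_\delta$ conditions, your parenthetical remark disposes of it.
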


Note that property $(\gamma)$ is preserved by finite powers
\cite{g1}, but for projective $S_{1}(\Omega,\Gamma)$ is not the
case (Example 58 in \cite{bcm}).

\medskip
\begin{proposition}\label{pr74} Suppose $X$ $\models$ projective
$S_{1}(\Omega,\Gamma)$. Then $X\bigsqcup X$ $\models$ projective
$S_{1}(\Omega,\Gamma)$.

\end{proposition}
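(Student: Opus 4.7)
The plan is to work through the characterization (6) of Theorem \ref{th21}: $Y \models$ projective $S_{1}(\Omega,\Gamma)$ iff $Y \models S_{1}(\Omega_{cz}^{\omega},\Gamma)$. Thus it suffices to start from a sequence $(\mathcal{W}_{n})_{n\in\omega}$ of countable cozero $\omega$-covers of $X\bigsqcup X$ and select $W_{n}\in\mathcal{W}_{n}$ with $\{W_{n}:n\in\omega\}$ a $\gamma$-cover of $X\bigsqcup X$. Denote the two summands by $X_{1}$ and $X_{2}$, each identified with $X$. The central idea is to push the problem down to a single cozero $\omega$-cover of $X$ by intersecting the $X_{1}$- and $X_{2}$-pieces of each cover element, apply the hypothesis on $X$, and then lift the resulting $\gamma$-cover back up.

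Concretely, write $\mathcal{W}_{n}=\{W^{n}_{k}:k\in\omega\}$. Because the disjoint sum topology splits cozero sets along the two clopen copies, each $W^{n}_{k}$ decomposes as $W^{n}_{k}=A^{n}_{k}\sqcup B^{n}_{k}$ with $A^{n}_{k}\subseteq X_{1}$ and $B^{n}_{k}\subseteq X_{2}$ both cozero. Via the identifications $X_{i}\cong X$, define $\mathcal{V}_{n}:=\{A^{n}_{k}\cap B^{n}_{k}:k\in\omega\}$, a countable family of cozero subsets of $X$. I would verify that each $\mathcal{V}_{n}$ is an $\omega$-cover of $X$: for any finite $F\subseteq X$, the set $(F\times\{1\})\cup(F\times\{2\})$ is a finite subset of $X\bigsqcup X$, hence contained in some $W^{n}_{k}=A^{n}_{k}\sqcup B^{n}_{k}$, which forces $F\subseteq A^{n}_{k}$ and $F\subseteq B^{n}_{k}$, and therefore $F\subseteq A^{n}_{k}\cap B^{n}_{k}$.

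Since $X\models S_{1}(\Omega_{cz}^{\omega},\Gamma)$ by assumption, apply the selection principle to $(\mathcal{V}_{n})_{n\in\omega}$ to get indices $k(n)$ so that $\{A^{n}_{k(n)}\cap B^{n}_{k(n)}:n\in\omega\}$ is a $\gamma$-cover of $X$. To lift: for any point $y\in X\bigsqcup X$, say $y=(x,i)$ with $i\in\{1,2\}$, the singleton $\{x\}\subseteq X$ lies in $A^{n}_{k(n)}\cap B^{n}_{k(n)}$ for all but finitely many $n$, hence $y\in W^{n}_{k(n)}$ for all but finitely many $n$. Thus $\{W^{n}_{k(n)}:n\in\omega\}$ is a $\gamma$-cover of $X\bigsqcup X$, and the verification of $S_{1}(\Omega_{cz}^{\omega},\Gamma)$ for $X\bigsqcup X$ is complete.

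I do not anticipate a genuine obstacle: the argument is essentially bookkeeping plus the observation that in the disjoint sum, any single finite subset is contained in a single cover element only if both its $X_{1}$- and $X_{2}$-parts are covered by the corresponding summands simultaneously. The one subtle point is ensuring that $A^{n}_{k}\cap B^{n}_{k}$ really is cozero in $X$ (which follows since each piece is cozero in its respective copy and cozero sets are closed under finite intersection), and that the selection principle is applied to the sequence $(\mathcal{V}_{n})_{n\in\omega}$ rather than to a single cover.
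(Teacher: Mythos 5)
Your proof is correct and follows essentially the same route as the paper: decompose each element of a cozero $\omega$-cover of $X\bigsqcup X$ into its traces on the two copies, intersect them to obtain a countable cozero $\omega$-cover of $X$, select via the hypothesis on $X$, and lift the resulting $\gamma$-cover back. The only cosmetic difference is that you invoke the sequential form $S_{1}(\Omega_{cz}^{\omega},\Gamma)$ (item (6) of Theorem \ref{th21}) for a sequence of covers, whereas the paper applies the single-cover form (item (5)); these are equivalent, and your handling of the degenerate case (an intersection equal to $X$ can only arise from a cover element equal to the whole space, which is excluded) is in fact tidier than the paper's.
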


\begin{proof} Let $\mathcal{U}=\{U_i: i\in \omega\}$ be a countable $\omega$-cover  of $X\bigsqcup X$ by
cozero sets. Let $X\bigsqcup X=X_1\bigsqcup X_2$ where $X_i=X$ for
$i=1,2$. Consider $\mathcal{V}_1=\{U^1_i=U_i\bigcap X_1 :
X_1\setminus U_i\neq\emptyset, i\in \omega\}$ and
$\mathcal{V}_2=\{U^2_i=U_i\bigcap X_2 : X_2\setminus
U_i\neq\emptyset, i\in \omega \}$ as families of subsets of the
space $X$. Define $\mathcal{V}:=\{ U^1_i\bigcap U^2_i : U^1_i\in
\mathcal{V}_1$ and $U^2_i\in \mathcal{V}_2 \}$. Note that
$\mathcal{V}$ is a countable $\omega$-cover  of $X$ by cozero
sets. By Theorem \ref{th21}, there is $\{U^1_{i_n}\bigcap
U^2_{i_n} :n\in \omega\}\subset \mathcal{V}$ such that
$\{U^1_{i_n}\bigcap U^2_{i_n} :n\in \omega\}$ is a $\gamma$-cover
of $X$. It follows that $\{U_{i_n}:n\in \omega \}$ is a
$\gamma$-cover of $X\bigsqcup X$.

\end{proof}

\begin{proposition}
Suppose $X$ $\models$ projective $S_{1}(\Omega,\Gamma)$, $A$ and
$X\setminus A$ are $Z_{\sigma}$-sets in $X$. Then $X\bigsqcup A$
$\models$ projective $S_{1}(\Omega,\Gamma)$.
\end{proposition}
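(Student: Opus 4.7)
The plan is to realize $X \bigsqcup A$ as a $Z_{\sigma}$-subspace of $X \bigsqcup X$ and then combine Proposition \ref{pr74} with the earlier proposition which states that projective $S_1(\Omega,\Gamma)$ is inherited by $Z_{\sigma}$-subsets.

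First, by Proposition \ref{pr74}, the space $X \bigsqcup X$ satisfies projective $S_1(\Omega,\Gamma)$. Write $X \bigsqcup X = X_1 \bigsqcup X_2$, where $X_1$ and $X_2$ are the two clopen copies of $X$. I would embed $X \bigsqcup A$ into $X \bigsqcup X$ by sending the $X$-summand onto $X_1$ and the $A$-summand onto $A \subseteq X_2$. Since $X_1$ is clopen in $X_1 \cup A$ and the subspace topology on $A$ inherited from $X \bigsqcup X$ coincides with its original subspace topology from $X$, this identifies $X \bigsqcup A$ homeomorphically with $X_1 \cup A$ as subspaces.

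The key step is then to verify that $X_1 \cup A$ is a $Z_{\sigma}$-subset of $X \bigsqcup X$. The set $X_1$ is clopen, hence a zero-set of $X \bigsqcup X$ (the characteristic function of $X_2$ is continuous and vanishes exactly on $X_1$). Writing $A = \bigcup_i F_i$ with each $F_i$ a zero-set of $X$ realized inside $X_2$, I would take a witness $f_i : X_2 \to [0,1]$ with $F_i = f_i^{-1}(0)$ and extend it by the constant $1$ on $X_1$; because $X_2$ is clopen, this extension is continuous on $X \bigsqcup X$ and has zero-set exactly $F_i$. So each $F_i$, and hence $A$, is a $Z_{\sigma}$-set of $X \bigsqcup X$, and consequently so is $X_1 \cup A$. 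Applying the preservation result for $Z_{\sigma}$-subsets to $X_1 \cup A \subseteq X \bigsqcup X$ then gives $X \bigsqcup A \models$ projective $S_1(\Omega,\Gamma)$.

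The main obstacle is really just this middle verification that $X_1 \cup A$ is $Z_{\sigma}$ in $X \bigsqcup X$ — the clopenness of both summands is what makes zero-sets of the summands lift to zero-sets of the disjoint union. Once the $Z_{\sigma}$ structure is in place, Proposition \ref{pr74} and the preservation proposition supply the conclusion with no further work.
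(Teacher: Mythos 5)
Your proof is correct, but it takes a genuinely different route from the paper's. The paper first establishes (Theorem \ref{tpr1}) that $(X\setminus A)\bigsqcup A$ satisfies projective $S_{1}(\Omega,\Gamma)$ --- this is where both hypotheses, on $A$ and on $X\setminus A$, get used --- then doubles that space via Proposition \ref{pr74} and maps the resulting four-summand space continuously onto $X\bigsqcup A$, concluding by preservation of projective $S_{1}(\Omega,\Gamma)$ under continuous images. You instead double $X$ itself, identify $X\bigsqcup A$ with the subspace $X_1\cup A$ of $X\bigsqcup X$, verify that this subspace is a $Z_{\sigma}$-set there (your lifting of zero-sets from a clopen summand by extending the witness function by $1$ on the other summand is correct, as is the observation that the clopen set $X_1$ is itself a zero-set), and then invoke the earlier proposition that projective $S_{1}(\Omega,\Gamma)$ passes to $Z_{\sigma}$-subsets. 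Your route bypasses Theorem \ref{tpr1} and the continuous-image step entirely, and --- worth noting --- it never uses the hypothesis that $X\setminus A$ is a $Z_{\sigma}$-set in $X$: only the $Z_{\sigma}$-ness of $A$ enters. So your argument actually yields a formally stronger statement with fewer hypotheses; what the paper's longer route buys is the intermediate result that $(X\setminus A)\bigsqcup A$ itself satisfies projective $S_{1}(\Omega,\Gamma)$, which it wants anyway for Corollary \ref{pr1}.
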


\begin{proof}  By Theorem \ref{tpr1}, $(X\setminus A)\bigsqcup A$ $\models$ projective $S_{1}(\Omega,\Gamma)$. Let $Y=((X_1\setminus
A_1)\bigsqcup A_1)\bigsqcup ((X_2\setminus A_2)\bigsqcup A_2))$
where $X_i=X$, $A_i=A$ for $i=1,2$. By Proposition \ref{pr74}, $Y$
is projective $S_{1}(\Omega,\Gamma)$.

 Define the continuous mapping
$f: Y \mapsto X\bigsqcup A$ defined by $f$

$$f= \left\{
\begin{array}{rcl}
id(X_1\setminus A_1)=X\setminus X_A \,\, \, \, \, \,  \,  \\
id(A_1)=X_A \, \, \, \, \, \, \, \, \, \, \, \, \, \, \,  \,  \\
id(X_2\setminus A_2)=X\setminus X_A \,\, \, \, \, \,  \,  \\
id(A_2)=A \,\, \, \, \, \,  \, \, \, \, \, \,  \,

\end{array}
\right.
$$

where $X_A\subset X$ such that $X_A=A$ and $id$ is an identity
mapping. Note that the projectively $(\gamma)$ property is
preserved by continuous images (Proposition 55 in \cite{glm}).
Since $Y$ $\models$ projective $S_{1}(\Omega,\Gamma)$ and $f$ is a
continuous mapping this implies $X\bigsqcup A$ $\models$
projective $S_{1}(\Omega,\Gamma)$.

\end{proof}

\begin{corollary} \label{cr1}Suppose $X$ is a $\gamma$-set, $A$ is $G_{\delta}$ and
$F_{\sigma}$ in $X$. Then $X\bigsqcup A$ is a $\gamma$-set.
\end{corollary}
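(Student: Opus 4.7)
The plan is to deduce this corollary directly from the preceding Proposition by translating the classical hypotheses ($\gamma$-set, $G_{\delta}$, $F_{\sigma}$) into the selection-principle language in which that Proposition is stated (projective $S_{1}(\Omega,\Gamma)$, $Z_{\sigma}$-sets), and then translating back at the end.

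First I would rewrite the hypotheses. By the Gerlits--Nagy equivalence recalled earlier in the section, $X$ being a $\gamma$-set means $X \models S_{1}(\Omega,\Gamma)$. Since this property is preserved under continuous images (Proposition~55 of~\cite{glm}, quoted in the excerpt), every continuous second countable image of $X$ satisfies $S_{1}(\Omega,\Gamma)$, so $X \models$ projective $S_{1}(\Omega,\Gamma)$. Because $X$ is a $\gamma$-set it is in particular a separable metric space, hence perfectly normal, so the classes of $F_{\sigma}$- and $Z_{\sigma}$-subsets of $X$ coincide (as observed earlier in the excerpt). Consequently the assumption that $A$ is $F_{\sigma}$ in $X$ gives that $A$ is a $Z_{\sigma}$-set of $X$, and the assumption that $A$ is $G_{\delta}$ in $X$ gives that $X \setminus A$ is $F_{\sigma}$, hence also a $Z_{\sigma}$-set of $X$.

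Second, with these reformulated hypotheses, the preceding Proposition applies verbatim and yields $X \sqcup A \models$ projective $S_{1}(\Omega,\Gamma)$.

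Third I would pass from the projective selection principle back to the selection principle itself. The space $X \sqcup A$ is a disjoint union of two separable metric spaces, hence is itself second countable. Thus the identity map exhibits $X \sqcup A$ as a continuous second countable image of itself, and projective $S_{1}(\Omega,\Gamma)$ collapses to $S_{1}(\Omega,\Gamma)$, i.e., $X \sqcup A$ is a $\gamma$-set, as required.

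I do not expect any real obstacle: all the combinatorial content has already been extracted in the preceding Proposition, and what remains is a routine dictionary between the $G_{\delta}$/$F_{\sigma}$ language used in the corollary statement (consonant with Corollary~\ref{pr1}) and the $Z_{\sigma}$/projective-$S_{1}(\Omega,\Gamma)$ language used in the Proposition. The only point worth a second glance is the use of perfect normality to identify $F_{\sigma}$ with $Z_{\sigma}$, which is immediate from $X$ being metrizable.
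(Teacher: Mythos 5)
Your proof is correct and is essentially the paper's intended argument: the corollary is stated without proof precisely because it follows from the preceding Proposition by the dictionary you describe ($\gamma$-set $\Leftrightarrow$ projective $S_{1}(\Omega,\Gamma)$ for second countable spaces, and $F_{\sigma}/G_{\delta}$ $\Leftrightarrow$ $Z_{\sigma}$-complemented pairs in perfectly normal spaces), exactly as in the parallel passage from Theorem~\ref{tpr1} to Corollary~\ref{pr1}. The only nitpick is attributional: the preservation of $S_{1}(\Omega,\Gamma)$ itself under continuous images (used to get from ``$\gamma$-set'' to ``projectively $(\gamma)$'') is the classical fact about $\omega$-covers rather than Proposition~55 of \cite{glm}, which concerns the projective property; this does not affect the argument.
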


\begin{proposition}
Suppose a Tychonoff space $X$ such that $C_p(X)$ is strongly
sequentially separable, $A\subset X$ and $A$ and $X\setminus A$
are $Z_{\sigma}$-sets in $X$. Then $C_p(X\bigsqcup A)$ is strongly
sequentially separable.
\end{proposition}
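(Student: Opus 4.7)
The plan is to reduce everything to Theorem \ref{th3}, which characterizes strong sequential separability of $C_p$-spaces via the conjunction of projective $S_1(\Omega,\Gamma)$ and countable $i$-weight. So my task splits into verifying these two properties for $X \bigsqcup A$.

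First I would use the hypothesis that $C_p(X)$ is strongly sequentially separable to invoke Theorem \ref{th3} on $X$, obtaining both $X \models$ projective $S_1(\Omega,\Gamma)$ and $iw(X)=\aleph_0$. Combined with the assumption that $A$ and $X \setminus A$ are $Z_\sigma$-sets of $X$, the immediately preceding proposition then yields $X \bigsqcup A \models$ projective $S_1(\Omega,\Gamma)$. This handles the selection principle half directly, with no new work required.

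The remaining step is to check $iw(X \bigsqcup A)=\aleph_0$. Since $iw(X)=\aleph_0$, there is a condensation $f:X \to Y$ onto a second-countable space $Y$. Restricting $f$ to $A$ gives a condensation of $A$ onto the second-countable subspace $f(A) \subseteq Y$. Taking the disjoint sum yields a continuous bijection $f \sqcup (f\upharpoonright A): X \bigsqcup A \to Y \bigsqcup f(A)$ onto a second-countable space, so $iw(X \bigsqcup A) \leq \aleph_0$. With both conditions in hand, Theorem \ref{th3} applied to $X \bigsqcup A$ gives that $C_p(X \bigsqcup A)$ is strongly sequentially separable.

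I expect no serious obstacle here: the proof is essentially just concatenating Theorem \ref{th3} with the previous proposition, plus a trivial $i$-weight computation for disjoint unions. The only small care is in the condensation argument for $iw$, but restricting a condensation to a subspace and then taking disjoint sums of second-countable spaces is routine.
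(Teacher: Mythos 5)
Your proof is correct and follows exactly the route the paper intends: the proposition is stated there without an explicit proof, as an immediate consequence of Theorem~\ref{th3} together with the preceding proposition that $X\bigsqcup A\models$ projective $S_{1}(\Omega,\Gamma)$. Your additional verification that $iw(X\bigsqcup A)=\aleph_0$ via restricting the condensation to $A$ and taking the disjoint sum is the right (and routine) way to fill in the one detail the paper leaves implicit.
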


\begin{corollary} \label{cr1} Suppose $X$ is a perfect normal space and $C_p(X)$ is strongly
sequentially separable, $A$ is $G_{\delta}$ and $F_{\sigma}$ in
$X$. Then $C_p(X\bigsqcup A)$  is strongly sequentially separable.
\end{corollary}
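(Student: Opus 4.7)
The plan is to derive this as an immediate consequence of the preceding Proposition, whose hypothesis requires $A$ and $X\setminus A$ to be $Z_\sigma$-sets in $X$. So the only real work is to verify that the topological hypothesis ``$A$ is both $G_\delta$ and $F_\sigma$ in a perfect normal space'' upgrades the $F_\sigma$-structure on both $A$ and $X\setminus A$ to a $Z_\sigma$-structure.

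First I would invoke the remark already made earlier in the paper: in a perfect normal space $X$, the class of $Z_\sigma$-sets coincides with the class of $F_\sigma$-sets. This is the crucial bridge, since the preceding Proposition is phrased in terms of $Z_\sigma$ while the hypothesis of the Corollary is phrased in terms of $F_\sigma$ and $G_\delta$.

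Next, from ``$A$ is $G_\delta$ and $F_\sigma$'' I would read off that $X\setminus A$ is also simultaneously $F_\sigma$ (as the complement of a $G_\delta$) and $G_\delta$ (as the complement of an $F_\sigma$). In particular both $A$ and $X\setminus A$ are $F_\sigma$-subsets of $X$, and by the perfect normality of $X$ both are $Z_\sigma$-sets in $X$.

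At this point the hypotheses of the preceding Proposition are exactly satisfied: $C_p(X)$ is strongly sequentially separable and both $A$ and $X\setminus A$ are $Z_\sigma$-sets in $X$. Applying that Proposition yields that $C_p(X\bigsqcup A)$ is strongly sequentially separable, which finishes the argument. There is no real obstacle here beyond correctly citing the perfect-normal equivalence between $F_\sigma$ and $Z_\sigma$; no selection-principle argument needs to be repeated because the heavy lifting was already done in the preceding Proposition via Theorem \ref{tpr1} and Proposition \ref{pr74}.
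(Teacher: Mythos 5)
Your argument is correct and is exactly the derivation the paper intends: the corollary is stated without proof as an immediate consequence of the preceding Proposition, using the fact (noted earlier in the paper) that in a perfect normal space the $Z_{\sigma}$-sets are precisely the $F_{\sigma}$-sets, so that $A$ and its complement $X\setminus A$ (being $F_{\sigma}$ and $G_{\delta}$ respectively alongside their duals) are both $Z_{\sigma}$ in $X$. Nothing further is needed.
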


Recall that $l^{*}(X)\leq \aleph_0$ if every finite power of $X$
is Lindel$\ddot{e}$of (or, by Proposition in \cite{gn}, if every
$\omega$-cover of $X$ contains an at most countable
$\omega$-subcover of $X$).

\begin{theorem}\label{th100} For a Tychonoff space $X$ with $l^{*}(X)\leq \aleph_0$ and $n\in \omega$ the following statements are
equivalent:

\begin{enumerate}

\item $C_p(X)$ is strongly sequentially separable;

\item $C_p(X^n)$ is strongly sequentially separable;

\item $(C_p(X))^{\aleph_0}$ is strongly sequentially separable;

\item $C_p(X)$ is separable and Frechet-Urysohn;

\item $C_p(X^n)$ is separable and Frechet-Urysohn;

\item $(C_p(X))^{\aleph_0}$ is separable and Frechet-Urysohn;

\item $iw(X)=\aleph_0$ and $X$ $\models$ projective
$S_{1}(\Omega,\Gamma)$;

\item $iw(X)=\aleph_0$ and $X^n$ $\models$ projective
$S_{1}(\Omega,\Gamma)$;

\item $iw(X)=\aleph_0$ and $X$ $\models$ $S_{1}(\Omega, \Gamma)$;

\item $iw(X^n)=\aleph_0$ and $X^n$ $\models$ $S_{1}(\Omega,
\Gamma)$;

\item $iw(X)=\aleph_0$ and $X$ has $\gamma$ property;

\item $iw(X^n)=\aleph_0$ and $X^n$ has $\gamma$ property;

\item $C_p(X, \mathbb{R}^{\aleph_0})$ is separable and
Frechet-Urysohn;

\item $C_p(X, \mathbb{R}^{\aleph_0})$ is strongly sequentially
separable.

\end{enumerate}

\end{theorem}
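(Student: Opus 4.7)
The plan is to organize the fourteen equivalences around Theorem~\ref{th3} as the central hub, with the classical Gerlits--Nagy theorem linking the $\gamma$-property to Frechet-Urysohn $C_p$-spaces providing the bridge to items (4)--(6) and (13)--(14).

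First, I would obtain (1)$\Leftrightarrow$(7) as a direct instance of Theorem~\ref{th3}. Under the standing hypothesis $l^*(X)\le\aleph_0$, every open $\omega$-cover of $X$ has a countable $\omega$-subcover, and a standard Tychonoff refinement turns each such countable open $\omega$-cover into a countable cozero $\omega$-cover. Combined with Theorem~\ref{th21}(6), which identifies projective $S_1(\Omega,\Gamma)$ with $S_1(\Omega_{cz}^{\omega},\Gamma)$, this yields (7)$\Leftrightarrow$(9); since $S_1(\Omega,\Gamma)$ is by definition the Gerlits--Nagy $\gamma$-property, one also has (9)$\Leftrightarrow$(11).

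Next, I would propagate the equivalences to finite powers. The hypothesis $l^*(X)\le\aleph_0$ and the equality $iw(X^n)=iw(X)$ are both preserved under finite powers, and the classical Gerlits--Nagy theorem that $S_1(\Omega,\Gamma)$ is preserved under finite powers then delivers (9)$\Leftrightarrow$(10) and (11)$\Leftrightarrow$(12). A second application of Theorem~\ref{th3} to $X^n$ gives (2)$\Leftrightarrow$(8)$\Leftrightarrow$(7), so items (1),(2),(7)--(12) coalesce into a single equivalence class. For (4) and (5) I would invoke the Gerlits--Nagy theorem in the form: $C_p(Y)$ is Frechet-Urysohn iff $Y\models S_1(\Omega,\Gamma)$, together with $C_p(Y)$ separable iff $iw(Y)=\aleph_0$, which yields (4)$\Leftrightarrow$(9) and (5)$\Leftrightarrow$(10). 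For (6) I would use the further Gerlits--Nagy fact that a Frechet-Urysohn $C_p$-space is automatically $\alpha_1$, so that its countable power remains Frechet-Urysohn; combined with separability of $(C_p(X))^{\aleph_0}$ this gives (6)$\Leftrightarrow$(4). Finally, the canonical homeomorphism $C_p(X,\mathbb{R}^{\aleph_0})\cong (C_p(X))^{\aleph_0}$ identifies (13)$\Leftrightarrow$(6) and (14)$\Leftrightarrow$(3), and the implications (6)$\Rightarrow$(3) and (4)$\Rightarrow$(1) are immediate from the fact that in any Frechet-Urysohn space the sequential closure coincides with the topological closure, so every countable dense subset is sequentially dense.

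The main obstacle I anticipate is the jump from $C_p(X)$ to the countable power $(C_p(X))^{\aleph_0}$ in items (3), (6), (13), (14): a countable product of Frechet-Urysohn spaces need not be Frechet-Urysohn, so one must genuinely appeal to the $\alpha_1$ strengthening of the Gerlits--Nagy theorem rather than to Frechet-Urysohn alone. A secondary care point is the cozero-refinement used to pass from (7) to (9): one must extract, from a countable open $\omega$-cover of a Lindel\"of Tychonoff space, a countable \emph{cozero} $\omega$-cover, which is done by refining each member pointwise and then using Lindel\"ofness a second time to keep the refinement countable.
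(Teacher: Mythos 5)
Your overall architecture coincides with the paper's: everything is funnelled through Theorem~\ref{th3}, the Gerlits--Nagy characterization of Fr\'echet--Urysohn $C_p$-spaces via the $\gamma$-property (Theorem II.3.2 in \cite{arh}), the preservation of $\gamma$ under finite powers, and the homeomorphism $(C_p(X))^{\aleph_0}\cong C_p(X,\mathbb{R}^{\aleph_0})$. Your handling of (7)$\Leftrightarrow$(9) via extraction of a countable cozero $\omega$-subcover under $l^{*}(X)\le\aleph_0$ and Theorem~\ref{th21} is correct and is exactly where the standing hypothesis is used; the finite-power and separability bookkeeping is also fine.

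There is, however, one genuinely wrong step: your justification of the countable-power items (3), (6), (13), (14) rests on the assertion that a Fr\'echet--Urysohn $C_p$-space is automatically an $\alpha_1$-space. This is not a theorem. By the Tsaban--Zdomskyy result quoted later in this paper, $C_p(X)$ is $\alpha_1$ iff $X\models S_1(B_{\Gamma},B_{\Gamma})$, equivalently every Borel image of $X$ in $\omega^{\omega}$ is bounded; Rec\l aw's $\gamma$-set (under $\mathfrak{p}=\mathfrak{c}$), which admits a Borel surjection onto $[0,1]$ and hence an unbounded Borel image in $\omega^{\omega}$, therefore yields a Fr\'echet--Urysohn $C_p(X)$ that is not $\alpha_1$. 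So the implication you invoke fails consistently and cannot carry the argument (and, even granted $\alpha_1$, the countable productivity of ``Fr\'echet--Urysohn plus $\alpha_1$'' would itself need justification). The equivalences involving $(C_p(X))^{\aleph_0}$ are nevertheless true, but for a different reason: Gerlits' theorem (the content of Theorem II.3.2 in \cite{arh} together with \cite{g1}) gives directly that $C_p(X)$ is Fr\'echet--Urysohn iff all finite powers of $X$ have property $\gamma$ iff $(C_p(X))^{\aleph_0}$ is Fr\'echet--Urysohn; equivalently, one passes to the countable free sum $\bigsqcup_{n\in\omega}X$, whose $C_p$ is $(C_p(X))^{\aleph_0}$, and verifies the $\gamma$-type property there --- this is the role of Proposition~\ref{pr74} (and its iteration) in the paper's citation list. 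Replace the $\alpha_1$ step by an appeal to this form of Gerlits' theorem and the rest of your proof stands.
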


\medskip

\begin{proof} By Theorem \ref{th3}, Proposition \ref{pr74} and Theorem II.3.2 in
\cite{arh}, and that $(C_p(X))^{\aleph_0}$ is homeomorphic to the
space $C_p(X, \mathbb{R}^{\aleph_0})$ (\cite{arch10}).

\end{proof}

\medskip

By Todor$\check{c}$evi$\acute{c}$ Theorem (Theorem 4 in
\cite{gami}) and Theorem \ref{th100} we have the next

\begin{proposition} Assuming $\diamondsuit_{\omega_1}$ there
exists a $\gamma$-set $X$ of cardinality  $\omega_1=\mathfrak{c}$
that for every subset $Y$ of $X$ the space $C_p(Y)$ is strongly
sequentially separable.
\end{proposition}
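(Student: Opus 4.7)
The plan is to combine Todor\v{c}evi\'{c}'s $\diamondsuit_{\omega_1}$-construction with the criterion from Theorem \ref{th100}, so that the work reduces to a hypothesis check.

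First, I would invoke the cited Todor\v{c}evi\'{c} theorem (Theorem 4 in \cite{gami}) to produce, under $\diamondsuit_{\omega_1}$, a $\gamma$-set $X \subseteq \mathbb{R}$ with $|X|=\omega_1=\mathfrak{c}$ and with the strong property that \emph{every} subset $Y \subseteq X$ is itself a $\gamma$-set. This is the only place where the set-theoretic assumption is used; the rest of the argument is a routine transfer through the characterizations already established in the paper.

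Next, fix an arbitrary $Y \subseteq X$. I want to verify the hypotheses of Theorem \ref{th100}, in the form of clause (11), for the space $Y$. Since $Y$ is a $\gamma$-set it satisfies $S_1(\Omega,\Gamma)$, equivalently the Gerlits--Nagy $\gamma$-property. Moreover, as noted in the excerpt just before Proposition \ref{pr74}, the $\gamma$-property is preserved by finite powers (\cite{g1}); hence every $Y^n$ is a $\gamma$-set and in particular Lindel\"of, so $l^{*}(Y) \leq \aleph_{0}$, which is the standing hypothesis of Theorem \ref{th100}. Finally, because $Y$ is a subset of $\mathbb{R}$, the identity map from $Y$ into $\mathbb{R}$ is a condensation onto a second countable space, giving $iw(Y)=\aleph_0$.

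With $l^{*}(Y) \leq \aleph_{0}$, $iw(Y)=\aleph_0$ and $Y \models S_1(\Omega,\Gamma)$ in hand, the implication $(11)\Rightarrow(1)$ in Theorem \ref{th100} yields that $C_p(Y)$ is strongly sequentially separable. Since $Y \subseteq X$ was arbitrary, this proves the proposition.

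The main (in fact only) nontrivial ingredient is Todor\v{c}evi\'{c}'s $\diamondsuit_{\omega_1}$-construction of a $\gamma$-set whose every subset is again a $\gamma$-set; everything else is a direct application of Theorem \ref{th100}, together with the elementary observations that subsets of $\mathbb{R}$ have countable $i$-weight and that the $\gamma$-property passes to finite powers.
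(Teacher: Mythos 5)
Your argument is correct and is essentially the paper's own proof: the paper likewise derives the proposition directly from Todor\v{c}evi\'{c}'s $\diamondsuit_{\omega_1}$-construction (Theorem 4 of Galvin--Miller), which yields a $\gamma$-set of size $\omega_1=\mathfrak{c}$ all of whose subsets are $\gamma$-sets, combined with Theorem \ref{th100}. You have merely made explicit the routine hypothesis checks ($iw(Y)=\aleph_0$ and $l^{*}(Y)\leq\aleph_0$) that the paper leaves implicit.
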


\section{Baire functions class $\alpha$}

Let $X$ be a Tychonoff space and $C(X)$ the space of continuous
real-valued functions on $X$. Let $B_0(X)=C(X)$, and inductively
define $B_{\alpha}(X)$ for each ordinal $\alpha\leq \omega_1$ to
be the space of pointwise limits of sequences of functions in
$\bigcup\limits_{\xi<\alpha} B_{\xi}(X)$. Let $B^*_{\alpha}(X)$ be
the space of bounded functions in $B_{\alpha}(X)$.

The functions in
$B(X)=B_{\omega_1}(X)=\bigcup\limits_{\alpha<\omega_1}
B_{\alpha}(X)$ are called Baire functions or, if $X$ is
metrizable, Borel functions.

The Baire sets of $X$ of multiplicative class $\alpha$, denoted by
$Z_{\alpha}(X)$, are defined to be the zero sets of functions in
$B^*_{\alpha}(X)$. Those of additive class $\alpha$, denoted by
$CZ_{\alpha}(X)$, are defined as the complements of sets in
$Z_{\alpha}(X)$. $Z_{\omega_1}(X)=\bigcup\limits_{\alpha<\omega_1}
Z_{\alpha}(X)$. Finally, those of ambiguous class $\alpha$,
denoted by $A_{\alpha}(X)$, are the sets which are simultaneously
in $Z_{\alpha}(X)$ and $CZ_{\alpha}(X)$.

With the set-theoretic operations of unions and intersection,
$A_{\alpha}(X)$ is a Boolean algebra for each $\alpha\leq
\omega_1$. By the Lebesgue-Hausdoff Theorem (Theorem 6.1.1 in
\cite{jr}), for each $\alpha<\omega_1$

$Z_{\alpha+1}(X)=(CZ_{\alpha}(X))_{\delta}$, and
$CZ_{\alpha+1}(X)=(Z_{\alpha}(X))_{\sigma}$; and if $\lambda$ is a
limit ordinal, then $Z_{\lambda}(X)=(\bigcup\limits_{\xi<\lambda}
CZ_{\xi}(X))_{\sigma \delta}$ and
$CZ_{\lambda}(X)=(\bigcup\limits_{\xi<\lambda} Z_{\xi}(X))_{\delta
\sigma}$.

It is well known (\cite{kur}), that $f\in B_{\alpha}(X)$ iff
$f^{-1}(G)\in CZ_{\alpha}(X)$ for any open set $G$ of real line
$\mathbb{R}$.

In \cite{ospy}, Osipov and Pytkeev have established criterion for
$B_{1}(X)$ to be sequentially separable.

\begin{theorem}(Osipov, Pytkeev)\label{th6}
 A function
space $B_1(X)$ is sequentially separable iff there
 exists a bijection $\varphi: X \mapsto Y$ from a space $X$ onto a
 separable metrizable space $Y$, such that

\begin{enumerate}

\item $\varphi^{-1}(U)$ --- $Z_{\sigma}$-set of $X$ for any open
set $U$ of $Y$;

\item  $\varphi(T)$ --- $F_{\sigma}$-set of $Y$ for any zero-set
$T$ of $X$.

\end{enumerate}

\end{theorem}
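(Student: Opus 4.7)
The plan is to follow the template of Velichko's Theorem~\ref{th31} for $C_p(X)$, with cozero-sets replaced by zero-sets throughout. The key technical input is the characterization $f\in B_1(X)\Leftrightarrow f^{-1}(G)\in Z_\sigma(X)$ for every open $G\subseteq\mathbb{R}$, together with the sharper form of Theorem~\ref{th32} noted immediately after it: if $Y$ is separable metrizable, there is a countable $T\subset C(Y)$ with $[T]_{seq}=B_1(Y)$.

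\emph{Forward direction} ($\Rightarrow$). Let $S=\{f_n:n\in\omega\}$ be countable and sequentially dense in $B_1(X)$. Sequential density implies density, so $d(B_1(X))=\aleph_0$, hence $iw(X)=\aleph_0$ by the equality $iw(X)=d(B_1(X))$ cited in Section~2; fix a condensation $g:X\to M$ onto a separable metrizable $M$. I then take $\varphi$ to be the diagonal map $\varphi(x)=(g(x),f_1(x),f_2(x),\ldots)$ into $M\times\mathbb{R}^\omega$, and set $Y=\varphi(X)$; then $\varphi$ is a bijection onto a separable metric space. Condition~(1) reduces via a countable base of $Y$ to subbasic open sets: $g^{-1}(V)$ is a countable union of cozero-sets of $X$ (since $M$ is second countable and metrizable), and $f_n^{-1}(V)\in Z_\sigma(X)$ by the $B_1$ characterization; finite intersections and countable unions of $Z_\sigma$-sets stay $Z_\sigma$. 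For~(2), given a zero-set $T=h^{-1}(0)$ with $h\in C(X)$, put $\tilde h=h\circ\varphi^{-1}:Y\to\mathbb{R}$, so $\varphi(T)=\tilde h^{-1}(0)$. Sequential density supplies a subsequence $f_{n_k}\to h$ pointwise on $X$, and then $f_{n_k}\circ\varphi^{-1}\to\tilde h$ pointwise on $Y$; crucially each $f_{n_k}\circ\varphi^{-1}$ is the restriction to $Y$ of a coordinate projection, hence continuous, so $\tilde h\in B_1(Y)$. Since $Y$ is metrizable, $\tilde h^{-1}(0)=\varphi(T)$ is $F_\sigma$.

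\emph{Backward direction} ($\Leftarrow$). Given $\varphi$ as stated, fix $T=\{g_k\}\subset C(Y)$ with $[T]_{seq}=B_1(Y)$, and set $S=\{g_k\circ\varphi:k\in\omega\}$. Each $g_k\circ\varphi$ lies in $B_1(X)$ by (1) and the $B_1$ characterization. To see $S$ is sequentially dense, take $f\in B_1(X)$ and form $\tilde f=f\circ\varphi^{-1}$. For open $G\subseteq\mathbb{R}$, write $f^{-1}(G)=\bigcup_i Z_i$ with each $Z_i$ a zero-set of $X$; applying~(2) gives $\tilde f^{-1}(G)=\bigcup_i\varphi(Z_i)$ as a countable union of $F_\sigma$-sets, hence $F_\sigma$ in $Y$. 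Thus $\tilde f\in B_1(Y)$, and any sequence $g_{k_m}\to\tilde f$ from $T$ pulls back to $g_{k_m}\circ\varphi\to f$ pointwise on $X$.

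The main obstacle is verifying (2) in the forward direction; this is the only step that uses sequential density rather than mere separability, and it dictates the particular choice of $\varphi$ above. The functions $f_n$ must be built into the diagonal map so that each $f_n\circ\varphi^{-1}$ becomes a coordinate projection on $Y$, which is the mechanism that lets pointwise convergence in $B_1(X)$ transfer to pointwise convergence in $B_1(Y)$. After this insight the rest of the argument is bookkeeping with the $Z_\sigma/F_\sigma$ characterizations.
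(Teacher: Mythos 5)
Your overall architecture --- the diagonal/initial-topology construction of $Y$ from the countable sequentially dense set, the transfer $f\mapsto f\circ\varphi^{-1}$, and the use of the strengthened form of Theorem~\ref{th32} in the backward direction --- is the same as the one the paper uses for the more general Theorem~\ref{th9}, and your backward direction is correct. But there is a genuine gap in your verification of condition (2) in the forward direction. You write $\varphi(T)=\tilde h^{-1}(0)$ with $\tilde h=h\circ\varphi^{-1}$ and conclude that this set is $F_\sigma$ because $\tilde h\in B_1(Y)$ and $Y$ is metrizable. That inference is false: a Baire class $1$ function pulls open sets back to $F_\sigma$ (i.e.\ $Z_\sigma$) sets, hence pulls the \emph{closed} set $\{0\}$ back to a $G_\delta$ set, which need not be $F_\sigma$. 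Concretely, let $\tilde h$ on $\mathbb{R}$ send the $n$-th rational to $1/n$ and every irrational to $0$; this function is Baire class $1$, yet $\tilde h^{-1}(0)$ is the set of irrationals, a $G_\delta$ that is not $F_\sigma$. So from $\tilde h\in B_1(Y)$ alone you cannot conclude anything about $\varphi(T)$, and this is precisely the step you flagged as the crux.

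The repair --- and it is exactly the device used in the paper's proof of Theorem~\ref{th9} --- is to replace $h$ by the characteristic function $\chi_T$ of the zero-set $T$. Since $T$ is a zero-set and $X\setminus T$ is a cozero-set, both are $Z_\sigma$-sets of $X$, so $\chi_T\in B_1(X)$ by the Kuratowski characterization. Sequential density of $S$ gives $f_{n_k}\to\chi_T$ pointwise; since each $f_{n_k}\circ\varphi^{-1}$ is continuous on $Y$, the limit $\chi_{\varphi(T)}=\chi_T\circ\varphi^{-1}$ lies in $B_1(Y)$, and now $\varphi(T)=\chi_{\varphi(T)}^{-1}((\frac{1}{2},\frac{3}{2}))$ is the preimage of an \emph{open} interval, hence a $Z_\sigma$- (equivalently $F_\sigma$-) subset of the metrizable space $Y$. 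With this substitution the rest of your argument goes through. A minor remark: the auxiliary condensation $g:X\to M$ in your diagonal map is not needed, since density of $S$ in $B_1(X)\supseteq C(X)$ already forces $S$ to separate the points of the Tychonoff space $X$, so the functions of $S$ alone generate a separable metrizable topology.
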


\medskip
\begin{corollary}\label{th91} A function space $B_1(X)$ is sequentially separable iff there
 exists a Baire isomorphism class 1 from a space $X$ onto a
separable metrizable space.

\end{corollary}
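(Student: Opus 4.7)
The plan is to derive Corollary~\ref{th91} directly from Theorem~\ref{th6} by checking that the two technical conditions in that theorem are precisely the statement that the bijection $\varphi$ and its inverse $\varphi^{-1}$ are simultaneously of Baire class~$1$, i.e., that $\varphi$ is a Baire isomorphism of class~$1$.

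First I would unpack condition~(1) of Theorem~\ref{th6}: it asserts $\varphi^{-1}(U)\in Z_{\sigma}(X)=CZ_{1}(X)$ for every open $U$ of the separable metrizable space $Y$. By the Kuratowski-type characterization already recalled in the excerpt (namely, $f\in B_1(X)$ iff $f^{-1}(G)\in CZ_1(X)$ for each open $G\subseteq\mathbb{R}$), applied to the coordinate functions of $\varphi$ with respect to any countable base of $Y$, this is exactly the assertion that $\varphi\colon X\to Y$ is of Baire class~$1$. Next I would unpack condition~(2): it says $\varphi(T)=(\varphi^{-1})^{-1}(T)$ is $F_{\sigma}$ in $Y$ for every zero-set $T$ of $X$. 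Since $Y$ is metrizable, $F_{\sigma}(Y)=CZ_{1}(Y)$, so this is the parallel statement that $\varphi^{-1}\colon Y\to X$ is of Baire class~$1$, with the natural convention that the relevant test sets for a mapping \emph{into} a Tychonoff space are its zero-sets.

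With this dictionary in hand the corollary becomes a two-sided application of Theorem~\ref{th6}. If $B_1(X)$ is sequentially separable, the bijection $\varphi$ furnished by Theorem~\ref{th6} then is, by the translation above, a Baire isomorphism of class~$1$ onto a separable metrizable space $Y$. Conversely, any Baire isomorphism of class~$1$ $\varphi\colon X\to Y$ onto a separable metrizable space restores both (1) and (2) (just by running the translation backwards), so Theorem~\ref{th6} yields that $B_1(X)$ is sequentially separable.

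The only delicate bookkeeping is in fixing the convention for ``Baire class~$1$'' in the direction $Y\to X$, where in principle one could test against arbitrary open sets, cozero-sets, or zero-sets of the Tychonoff target; condition~(2) forces the zero-set convention, which is the natural one. Once this is matched, the argument is routine, since every cozero-set of $X$ is a countable union of zero-sets, and so images under $\varphi$ pass through $F_{\sigma}$-sets of $Y$ without any additional work. This is the main (and essentially only) conceptual step; after it, the proof is a direct citation of Theorem~\ref{th6}.
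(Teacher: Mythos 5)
Your overall strategy is the same as the paper's: Corollary~\ref{th91} carries no separate proof there and is meant to be read off from Theorem~\ref{th6} by translating its two conditions into the assertion that $\varphi$ is a Baire isomorphism of class $1$. Your translation of condition (1) is correct. The gap is in your treatment of condition (2). The paper's own definition of a bijection of class $1$ (given in Section 4) requires $\varphi(Z_0(X))\subset Z_1(Y)$, and $Z_1(Y)=(CZ_0(Y))_\delta$ is the \emph{multiplicative} class, i.e.\ the $G_\delta$ subsets of the metrizable space $Y$; equivalently, by Lebesgue--Hausdorff, a Baire-class-$1$ map pulls cozero sets back to $F_\sigma$ sets and hence zero sets back to $G_\delta$ sets. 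Condition (2) of Theorem~\ref{th6} instead asks that images of zero sets be $F_\sigma$. These requirements are not interchangeable. From condition (2) one does recover the definition: writing the cozero complement of a zero set $T$ as a countable union of zero sets shows $\varphi(X\setminus T)\in F_\sigma$, hence $\varphi(T)\in F_\sigma\cap G_\delta=A_1(Y)\subset Z_1(Y)$, so the ``only if'' direction of the corollary is sound. But in the ``if'' direction a class-$1$ Baire isomorphism only guarantees $\varphi(T)\in G_\delta$ for zero sets $T$ and $\varphi(C)\in F_\sigma$ for cozero sets $C$; writing $T$ as a countable intersection of cozero sets yields only $\varphi(T)\in F_{\sigma\delta}$, not $F_\sigma$. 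So ``running the translation backwards'' does not produce condition (2).

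Your ``zero-set convention'' in effect redefines Baire class $1$ for $\varphi^{-1}$ to be condition (2) itself; that makes the corollary a tautology rather than a statement about the class-$1$ isomorphisms the paper actually defines (and the standard Lebesgue--Hausdorff convention points the other way). To close the argument you would need to show that, under condition (1), a bijection sending zero sets of $X$ to $G_\delta$ sets of $Y$ in fact sends them to $F_\sigma$ sets, or else read ``class $1$'' as ``ambiguous class $1$'' ($\varphi(Z_0(X))\subset A_1(Y)$), which is what conditions (1)--(2) really encode. You should also note, for the ``only if'' direction, that the Baire-isomorphism requirement $\varphi(Z_{\omega_1}(X))=Z_{\omega_1}(Y)$ needs a (routine) transfinite induction from the class conditions. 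The analogous Corollary~\ref{th92} avoids the $F_\sigma$ versus $G_\delta$ issue because its target is a $\sigma$-set, where the two classes coincide on all Borel sets; for Corollary~\ref{th91} the target is merely separable metrizable, so the distinction is genuine.
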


\medskip

\begin{theorem}\label{th9}
 For any $1<\alpha\leq \omega_1$, a function space $B_{\alpha}(X)$ is sequentially separable iff there
 exists a bijection $\varphi: X \mapsto Y$ from a space $X$ onto a
 separable metrizable space $Y$, such that

\begin{enumerate}

\item $\varphi^{-1}(U)\in CZ_{\alpha}(X)$ for any open set $U$ of
$Y$;

\item  $\varphi(T)$ --- $F_{\sigma}$-set of $Y$ for  $T\in
(\bigcup\limits_{\xi<\alpha} Z_{\xi}(X))_{\delta}$.

\end{enumerate}

\end{theorem}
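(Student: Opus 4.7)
The plan is to extend Theorem \ref{th6} (the Osipov--Pytkeev criterion for $\alpha = 1$) to general $1 < \alpha \leq \omega_1$, using the Lebesgue--Hausdorff recursion for the classes $Z_\xi(X)$ and $CZ_\xi(X)$ as the structural backbone. The ``if'' direction is relatively clean; the ``only if'' direction carries the genuine technical difficulty.

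For $(\Leftarrow)$: start from a bijection $\varphi : X \to Y$ satisfying (1) and (2), and apply Theorem \ref{th32} together with the remark following it to obtain a countable $D \subseteq C(Y)$ with $[D]_{seq} = B_1(Y)$. Set $\widetilde D := \{d \circ \varphi : d \in D\}$; condition (1) places $\widetilde D$ inside $B_\alpha(X)$. For arbitrary $f \in B_\alpha(X)$, the push-forward $g := f \circ \varphi^{-1} : Y \to \mathbb{R}$ satisfies $g^{-1}(U) = \varphi(f^{-1}(U))$ for open $U \subseteq \mathbb{R}$. The Lebesgue--Hausdorff formulas express $f^{-1}(U) \in CZ_\alpha(X)$ as a countable union of sets in $(\bigcup_{\xi<\alpha} Z_\xi(X))_\delta$, and condition (2) turns each such image into an $F_\sigma$-subset of $Y$, so $g^{-1}(U)$ is $F_\sigma$ in $Y$ and $g \in B_1(Y) = [D]_{seq}$. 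A sequence $d_n \to g$ in $D$ then yields $d_n \circ \varphi \to f$ pointwise, so $\widetilde D$ is sequentially dense in $B_\alpha(X)$.

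For $(\Rightarrow)$: start from a countable sequentially dense $S = \{f_n\} \subseteq B_\alpha(X)$. Separability of $B_\alpha(X)$ together with $iw(X) = d(B_\alpha(X))$ gives $iw(X) = \aleph_0$ and hence a condensation of $X$ into a separable metrizable space, which supplies a countable generating system for $(\bigcup_{\xi<\alpha} Z_\xi(X))_\delta$. Define $\varphi : X \to Y \subseteq \mathbb{R}^\omega$ by $\varphi(x) := (f(x))_{f \in S'}$, where $S' \supseteq S$ is a countable enlargement drawn from $B_\alpha(X)$, and set $Y := \varphi(X)$. Injectivity follows from the Tychonoff property and sequential density: given distinct points in $X$, a continuous separator lies in $B_\alpha(X) = [S]_{seq}$, so some $f \in S$ also separates them. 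Condition (1) follows from $CZ_\alpha(X)$ being closed under finite intersections and countable unions (a consequence of the Lebesgue--Hausdorff recursion), together with each coordinate of $\varphi$ lying in $B_\alpha(X)$.

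The main obstacle is condition (2). Given $T = \bigcap_m h_m^{-1}(0)$ with $h_m \in B^*_{\xi_m}(X)$ and $\xi_m < \alpha$, the naive expression of each $h_m$ as a pointwise limit of $S$-elements only places $\varphi(T)$ in a considerably higher descriptive class than $F_\sigma$. The remedy, in the spirit of the Osipov--Pytkeev construction of \cite{ospy}, is to choose $S'$ to contain witness functions for a countable generating system of $(\bigcup_{\xi<\alpha} Z_\xi(X))_\delta$-sets extracted via the Lebesgue--Hausdorff recursion and the condensation afforded by $iw(X) = \aleph_0$. After this enlargement, the relevant $h_m$ appear as direct coordinates of $\varphi$, making $\varphi(h_m^{-1}(0))$ closed in $Y$; an $F_\sigma$ representation of $\varphi(T)$ then follows by combining these closed sets through the countable intersection-union structure of $(\bigcup_{\xi<\alpha} Z_\xi(X))_\delta$. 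The subtle point is organizing $S'$ to remain countable while still covering every $T$ in the desired class, and this bookkeeping is the technical heart of the proof.
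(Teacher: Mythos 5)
Your $(\Leftarrow)$ direction is sound and essentially coincides with the paper's: the paper likewise observes that condition (2) forces $f\circ\varphi^{-1}\in B_1(Y)$ for every $f\in B_{\alpha}(X)$, so that $C(Y)\subseteq F(B_{\alpha}(X))\subseteq B_1(Y)$ with $F(f)=f\circ\varphi^{-1}$, and then invokes Velichko's theorem (equivalently, the remark after Theorem \ref{th32}) to pull a countable sequentially dense set back through $\varphi$.

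The $(\Rightarrow)$ direction, however, has a genuine gap, and it sits exactly at the point you defer as ``bookkeeping.'' Your plan for condition (2) is to enlarge $S$ to a countable $S'$ containing witness functions for ``a countable generating system'' of $(\bigcup_{\xi<\alpha} Z_{\xi}(X))_{\delta}$, so that each relevant $h_m^{-1}(0)$ becomes a coordinate-preimage of a closed set. But this class is not countably generated in general: there can be continuum many zero-sets of bounded Baire-class-$\xi$ functions ($\xi<\alpha$) that are not obtained from any fixed countable subfamily by countable intersections and unions, and the condensation coming from $iw(X)=\aleph_0$ only controls a coarser \emph{topology}, not the Baire hierarchy built over $X$'s own topology. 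So no countable $S'$ can make every $T$ in the class visible as you describe, and the argument cannot be completed along this route. The paper avoids the issue entirely: it takes $Y=(X,\tau)$ with $\tau$ generated by $\{f^{-1}(G): f\in S,\ G \text{ open in } \mathbb{R}\}$ and $\varphi=\mathrm{id}$, notes that any $T\in(\bigcup_{\xi<\alpha} Z_{\xi}(X))_{\delta}\subseteq A_{\alpha}(X)$ has characteristic function $h\in B_{\alpha}(X)=[S]_{seq}$, hence $h$ is a pointwise limit of a sequence from $S\subseteq C_p(Y)$, i.e.\ $h\in B_1(Y)$, and therefore $T=h^{-1}\left(\frac{1}{2},\frac{3}{2}\right)$ is an $F_{\sigma}$-set of $Y$. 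The key idea you are missing is to use sequential density of $S$ applied to the characteristic function of $T$ (legitimate because $T$ lies in the ambiguous class $A_{\alpha}(X)$), rather than trying to encode all such $T$ into the coordinates of $\varphi$ in advance.
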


\begin{proof} $(1) \Rightarrow (2)$. Let $B_{\alpha}(X)$ be sequentially
separable, and $S$ be a countable sequentially dense subset
 of $B_{\alpha}(X)$. Consider the topology $\tau$ generated by the family
 $\mathcal{P}=\{f^{-1}(G): G$ is an open set of $\mathbb{R}$ and
 $f\in S \}$. A space $Y=(X,\tau)$ is a separable metrizable space
 because $S$ is a countable dense subset
 of $B_{\alpha}(X)$. Note that a function $f\in S$, considered as mapping from
 $Y$ to $\mathbb{R}$, is a continuous function. Let $\varphi$ be the identity
 mapping from $X$ on $Y$.

 We claim that $\varphi^{-1}(U)\in CZ_{\alpha}(X)$ for any open
set $U$ of $Y$.

Note that $CZ_{\alpha}(X)$  is closed under a countable unions and
a finite intersections. It follows that it is sufficient to prove
for any $P\in \mathcal{P}$. But $\varphi^{-1}(P)\in
CZ_{\alpha}(X)$  because $f\in S\subset B_{\alpha}(X)$.

Let $T\in (\bigcup\limits_{\xi<\alpha} Z_{\xi}(X))_{\delta}\subset
A_{\alpha}(X)$ and $h$ be a characteristic function of $T$. Since
$T\in A_{\alpha}(X)$, $h\in B_{\alpha}(X)$ (see Theorem 1, \S 31
in \cite{kur}).

There exists $\{f_n\}_{n\in \omega}\subset S$ such that
$\{f_n\}_{n\in \omega}$ converges to $h$. Since $S\subset C_p(Y)$,
$h\in B_1(Y)$ and, hence, $h^{-1}(\frac{1}{2},\frac{3}{2})=T$ is a
$Z_{\sigma}$-set of $Y$. (Note that if a space $Z$ is a perfect
normal space, then class of $Z_{\sigma}$-sets of $Z$ coincides
with class of $F_{\sigma}$-sets of $Z$).

$(2) \Rightarrow (1)$. Let $\varphi$ be a bijection from $X$ on
$Y$ satisfying the conditions of theorem. Then $h=f\circ
\varphi\in B_{\alpha}(X)$ for any $f\in C(Y)$
($h^{-1}(G)=\varphi^{-1}(f^{-1}(G))\in CZ_{\alpha}(X)$ for any
open set $G$ of $\mathbb{R}$).

Moreover, $g=f\circ \varphi^{-1}\in B_1(Y)$ for any $f\in
B_{\alpha}(X)$ because of $\varphi(Z)$ is a $F_{\sigma}$-set of
$Y$ for any  $Z\in CZ_{\alpha}(X)$. Define a mapping $F:
B_{\alpha}(X) \mapsto B_1(Y)$ by $F(f)=f\circ \varphi^{-1}$.

Since $\varphi$ is a bijection, $C(Y)$ embeds in
$F(B_{\alpha}(X))$ i.e., $C(Y)\subseteq F(B_{\alpha}(X))\subseteq
B_1(Y)\subseteq B_{\alpha}(Y)\subseteq F(B_{\alpha}(X))$. Note
that $F(B_{\alpha}(X))=B_1(Y)=B_{\alpha}(Y)$ i.e., $Y$ is a
$\sigma$-set (recall that a set of reals $X$ is $\sigma$-set if
each $G_{\delta}$ subset of $X$ is an $F_{\sigma}$ subset of $X$
\cite{kur}).

By Theorem 1 in \cite{vel}, each subspace $D$ such that
$C(Y)\subset D\subset B_1(Y)$ is sequentially separable. Thus
$F(B_{\alpha}(X))$ is sequentially separable, and, hence,
$B_{\alpha}(X)$ is sequentially separable.
\end{proof}

\medskip


\begin{corollary}\label{th94}
 A function space $B(X)$ is sequentially separable iff there
 exists a bijection $\varphi: X \mapsto Y$ from a space $X$ onto a
 separable metrizable space $Y$, such that

\begin{enumerate}

\item $\varphi^{-1}(U)$ --- Baire set of $X$ for any open set $U$
of $Y$;

\item  $\varphi(T)$ --- $F_{\sigma}$-set of $Y$ for any Baire set
$T$ of $X$.

\end{enumerate}

\end{corollary}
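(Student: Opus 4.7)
The plan is to deduce the corollary from Theorem~\ref{th9} applied with $\alpha=\omega_1$; the content reduces to checking that the two numbered conditions there, when specialised to this $\alpha$, literally say what the corollary states, namely that $\varphi^{-1}(U)$ is a Baire set of $X$ for every open $U\subseteq Y$ and that $\varphi(T)$ is an $F_\sigma$-set of $Y$ for every Baire set $T$ of $X$.

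The first step is to observe that the Baire hierarchies are monotone in $\xi$. For each $\xi<\omega_1$ the inclusion $CZ_\xi(X)\subseteq Z_{\xi+1}(X)$ is immediate from the defining equality $Z_{\xi+1}(X)=(CZ_\xi(X))_\delta$ (a single set is its own $\delta$-intersection), while $Z_\xi(X)\subseteq Z_{\xi+1}(X)$ follows by writing a zero set $T=f^{-1}(0)$ of $f\in B^*_\xi(X)$ as the countable intersection $\bigcap_n f^{-1}((-1/n,1/n))$ of sets in $CZ_\xi(X)$; the dual inclusion $Z_\xi(X)\subseteq CZ_{\xi+1}(X)$ holds symmetrically. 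Taking unions over $\xi<\omega_1$ yields $Z_{\omega_1}(X)=CZ_{\omega_1}(X)$, and this common family is precisely the $\sigma$-algebra of Baire sets of $X$, being closed under complements by the above and under countable unions and intersections by the regularity of $\omega_1$.

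Next I would verify the identity $\bigl(\bigcup_{\xi<\omega_1} Z_\xi(X)\bigr)_\delta = Z_{\omega_1}(X)$. The inclusion $\supseteq$ is trivial. For $\subseteq$, if $T=\bigcap_n T_n$ with $T_n\in Z_{\xi_n}(X)$, put $\xi:=\sup_n\xi_n<\omega_1$; by monotonicity each $T_n$ lies in $Z_\xi(X)$, and $Z_\xi(X)$ is closed under countable intersections (it is a multiplicative class), whence $T\in Z_\xi(X)\subseteq Z_{\omega_1}(X)$. With this, condition~(1) of Theorem~\ref{th9} at $\alpha=\omega_1$ reads "$\varphi^{-1}(U)\in CZ_{\omega_1}(X)$", i.e.\ $\varphi^{-1}(U)$ is a Baire set of $X$, and condition~(2) reads "$\varphi(T)$ is $F_\sigma$ in $Y$ for every Baire set $T$ of $X$". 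These are exactly the conditions listed in the corollary, so the biconditional from Theorem~\ref{th9} transports to give the claim.

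There is no substantive obstacle here: the corollary is really just a restatement of the $\alpha=\omega_1$ case of Theorem~\ref{th9}. The only point that requires care is the collapse of the Baire hierarchies at $\omega_1$, which rests on the regularity of $\omega_1$ together with the standard monotonicity of the Baire classes described above.
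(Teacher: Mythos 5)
Your proposal is correct and follows the same route the paper intends: the corollary is stated without proof as the $\alpha=\omega_1$ instance of Theorem~\ref{th9}, and your verification that $Z_{\omega_1}(X)=CZ_{\omega_1}(X)=\bigl(\bigcup_{\xi<\omega_1} Z_{\xi}(X)\bigr)_{\delta}$ coincides with the Baire $\sigma$-algebra (via monotonicity of the classes and the regularity of $\omega_1$) is exactly the routine specialization being left to the reader. No gaps.
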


A bijection $f: X \mapsto Y$ between Tychonoff spaces is called a
Baire isomorphism if $f(Z_{\omega_1}(X))=Z_{\omega_1}(Y)$; and is
said to be of class $(\alpha$,$\beta)$ (or $\alpha$ if
$\alpha=\beta$) if $f^{-1}(Z_0(Y))\subset Z_{\alpha}(X)$ and
$f(Z_0(X))\subset Z_{\beta}(Y)$.

If $X$ and $Y$ are metrizable, then $f$ is usually called a Borel
isomorphism.

\medskip

Note that if $X$ is a $\sigma$-set, then $B_1(X)=B(X)$ is
sequentially separable (Theorem \ref{th32}).

\medskip

Recall that $X\subset 2^{\omega}$ is Sierpi$\acute{n}$ski set, if
it is uncountable, but for every measure zero set $M$, $X\bigcap
M$ is countable. Sierpi$\acute{n}$ski showed that the Continuum
Hypothesis ($CH$) implies the existence of such sets.

\begin{proposition} (CH) Let $X$ be a
Sierpi$\acute{n}$ski set. Then $B_1(X)=B(X)$ is sequentially
separable.
\end{proposition}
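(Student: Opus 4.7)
The plan is to reduce both claims to the assertion that $X$ is a $\sigma$-set. Since $X\subseteq 2^{\omega}$ is separable metrizable, Theorem~\ref{th32} already gives that $B_1(X)$ is sequentially separable, so it will be enough to establish the identity $B_1(X)=B(X)$. By the preimage characterization recorded in the excerpt --- $f\in B_1(X)$ iff $f^{-1}(G)$ is a $Z_{\sigma}$-set (equivalently, in the metric case, an $F_{\sigma}$-set) of $X$ for every open $G\subseteq\mathbb{R}$, and analogously $f\in B(X)$ iff $f^{-1}(G)$ is Borel --- the identity $B_1(X)=B(X)$ will follow as soon as every Borel subset of $X$ is $F_{\sigma}$ in $X$, i.e., as soon as $X$ is a $\sigma$-set.

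To prove $X$ is a $\sigma$-set, I would appeal to the standard inner regularity of Lebesgue (Haar) measure on $2^{\omega}$: every Borel $B\subseteq 2^{\omega}$ decomposes as $B=F\cup N$ with $F$ an $F_{\sigma}$-subset of $B$ and $N$ of measure zero (take $F=\bigcup_n F_n$ with closed $F_n\subseteq B$ satisfying $\mu(B\setminus F_n)<1/n$). Given a Borel $E\subseteq X$, write $E=B\cap X$ for some Borel $B\subseteq 2^{\omega}$ and decompose $B=F\cup N$ as above. Then $E=(F\cap X)\cup(N\cap X)$; the first piece is the trace of an $F_{\sigma}$ on $X$ and hence $F_{\sigma}$ in $X$, while the second is countable by the defining Sierpi$\acute{n}$ski property applied to the null set $N$, so it is a countable union of points and therefore $F_{\sigma}$ in $X$. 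Thus $E$ is $F_{\sigma}$ in $X$, as required.

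This forces $B_1(X)=B(X)$ via the preimage characterization, and Theorem~\ref{th32} then delivers the sequential separability. The role of CH is confined to ensuring that a Sierpi$\acute{n}$ski set exists; no further set-theoretic hypothesis is invoked. I do not expect a substantive obstacle here: the two ingredients --- inner regularity of Lebesgue measure and the defining property of a Sierpi$\acute{n}$ski set --- combine directly to collapse the Borel hierarchy on $X$ down to level one.
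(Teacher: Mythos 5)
Your argument is correct and follows essentially the same route as the paper: reduce the statement to the claim that a Sierpi\'{n}ski set is a $\sigma$-set, and then conclude via Theorem~\ref{th32}. The only difference is that the paper disposes of the reduction by citing the Szpilrajn (Marczewski) theorem, whereas you reprove that theorem directly from inner regularity of the measure on $2^{\omega}$ plus the defining property of a Sierpi\'{n}ski set; both are fine.
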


\begin{proof} By Szpilrajn (Marczewski) Theorem (see in \cite{sm}), if $X$ is a
Sierpi$\acute{n}$ski set, then $X$ is a $\sigma$-set.
\end{proof}

\begin{corollary}\label{th92} Let  $X$ be a Tychonoff space and $1<\alpha\leq \omega_1$. A function space $B_{\alpha}(X)$ is sequentially separable
 iff there
 exists a Baire isomorphism of class $\alpha$ from a space $X$ onto
 a $\sigma$-set.
\end{corollary}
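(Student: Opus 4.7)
The plan is to derive this corollary from Theorem \ref{th9} by recognizing that its technical conditions (1) and (2) are exactly a reformulation of ``there exists a Baire isomorphism of class $\alpha$ from $X$ onto a $\sigma$-set''. The proof will therefore split into verifying both directions of this reformulation.

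For the direction assuming a Baire isomorphism $\varphi:X\to Y$ of class $\alpha$ onto a $\sigma$-set $Y$, I will check conditions (1) and (2) of Theorem \ref{th9}. Condition (1) is immediate: $Y$ being separable metrizable forces open sets to coincide with cozero sets, and $\varphi^{-1}(Z_0(Y))\subseteq Z_\alpha(X)$ gives $\varphi^{-1}(CZ_0(Y))\subseteq CZ_\alpha(X)$ by complementation. For condition (2), note that $(\bigcup_{\xi<\alpha}Z_\xi(X))_\delta$ lies inside $A_\alpha(X)$, hence inside the Baire $\sigma$-algebra of $X$; a Baire isomorphism sends Baire sets to Baire sets, so $\varphi(T)$ is a Baire subset of $Y$, and since $Y$ is a $\sigma$-set every Baire set of $Y$ is $F_\sigma$.

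For the converse, assume $B_\alpha(X)$ is sequentially separable. Theorem \ref{th9} furnishes a bijection $\varphi:X\to Y$ onto a separable metric $Y$ satisfying (1) and (2). I will extract that $\varphi$ is of class $\alpha$: condition (1) directly yields $\varphi^{-1}(Z_0(Y))\subseteq Z_\alpha(X)$; and for any $T\in Z_0(X)\subseteq(\bigcup_{\xi<\alpha}Z_\xi(X))_\delta$ condition (2) gives $\varphi(T)\in F_\sigma(Y)=CZ_1(Y)\subseteq Z_\alpha(Y)$, using the hypothesis $\alpha>1$ together with the monotonicity of the Baire hierarchy. To show $Y$ is a $\sigma$-set I will mirror the converse half of Theorem \ref{th9}: define $F:B_\alpha(X)\to B_1(Y)$ by $F(f)=f\circ\varphi^{-1}$ and derive the chain of inclusions
\[
C(Y)\subseteq F(B_\alpha(X))\subseteq B_1(Y)\subseteq B_\alpha(Y)\subseteq F(B_\alpha(X)),
\]
which forces $B_1(Y)=B_\alpha(Y)$ and hence $Y$ to be a $\sigma$-set, since on a separable metric space the collapse $B_1=B_\alpha$ is equivalent to every $G_\delta$ being $F_\sigma$ (as $\chi_G\in B_2\subseteq B_\alpha=B_1$ forces $G$ to be ambiguous class $1$).

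The hard part will be the last inclusion $B_\alpha(Y)\subseteq F(B_\alpha(X))$: showing $g\in B_\alpha(Y)$ implies $g\circ\varphi\in B_\alpha(X)$ requires pulling $CZ_\alpha(Y)$-sets back to $CZ_\alpha(X)$ through $\varphi^{-1}$, which is not immediate from condition (1) since (1) only controls preimages of open sets of $Y$. This is precisely the technical point that appears in the proof of Theorem \ref{th9} $(2)\Rightarrow(1)$, and my plan is to invoke that argument directly rather than redo the transfinite induction along the Borel hierarchy of $Y$.
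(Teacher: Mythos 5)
Your proposal is correct and follows the route the paper intends: Corollary~\ref{th92} is stated as an immediate consequence of Theorem~\ref{th9}, and your translation between conditions (1)--(2) of that theorem and the definition of a class-$\alpha$ Baire isomorphism onto a $\sigma$-set --- including the use of $\alpha>1$ to place $F_{\sigma}$-sets of $Y$ inside $Z_{\alpha}(Y)$, and the reuse of the chain $C(Y)\subseteq F(B_{\alpha}(X))\subseteq B_1(Y)\subseteq B_{\alpha}(Y)\subseteq F(B_{\alpha}(X))$ from the proof of Theorem~\ref{th9} to conclude that $Y$ is a $\sigma$-set --- is exactly the intended argument. The only detail left implicit is that the class conditions $\varphi^{-1}(Z_0(Y))\subseteq Z_{\alpha}(X)$ and $\varphi(Z_0(X))\subseteq Z_{\alpha}(Y)$ already yield $\varphi(Z_{\omega_1}(X))=Z_{\omega_1}(Y)$ by transfinite induction along the Baire hierarchy, which is the standard fact needed to certify that $\varphi$ is a Baire isomorphism in the sense defined in the paper.
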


\begin{corollary}\label{th93} Let  $X$ be a metrizable space. A space $B(X)$ of Borel functions is sequentially separable iff there
 exists a Borel isomorphism  from a space $X$ onto
 a $\sigma$-set.
\end{corollary}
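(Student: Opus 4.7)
The plan is to derive this corollary as an immediate specialization of Corollary \ref{th92} (with $\alpha=\omega_1$) and Corollary \ref{th94} to the metric setting, where the Baire and Borel structures on $X$ coincide.

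For the forward direction, I would assume $B(X)=B_{\omega_1}(X)$ is sequentially separable and invoke Corollary \ref{th92} at $\alpha=\omega_1$ to produce a Baire isomorphism of class $\omega_1$ from $X$ onto some $\sigma$-set $Y \subseteq \mathbb{R}$. Since $X$ is metrizable, the Baire $\sigma$-algebra on $X$ coincides with the Borel $\sigma$-algebra (because singletons of continuous functions already generate all closed sets as zero sets). Therefore a Baire isomorphism is nothing other than a Borel isomorphism, giving the required map.

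For the reverse direction, I would start from a Borel isomorphism $\varphi: X\to Y$ with $Y$ a $\sigma$-set, and verify the two hypotheses of Corollary \ref{th94}. Condition (1), that $\varphi^{-1}(U)$ is a Baire set of $X$ for each open $U\subseteq Y$, follows because $U$ is Borel in $Y$, $\varphi$ is a Borel isomorphism, and on the metrizable space $X$ the Borel and Baire $\sigma$-algebras agree. For condition (2), that $\varphi(T)$ is an $F_\sigma$ in $Y$ for every Baire (equivalently Borel) set $T$ of $X$, I would use the key property of $\sigma$-sets: since every $G_\delta$ in $Y$ is $F_\sigma$, taking complements gives $F_\sigma=G_\delta$ in $Y$, so the Borel hierarchy in $Y$ collapses and every Borel subset of $Y$ is an $F_\sigma$. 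Because $\varphi(T)$ is Borel in $Y$, it is automatically $F_\sigma$. Corollary \ref{th94} then yields sequential separability of $B(X)$.

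The only subtle step is the observation that a $\sigma$-set has all of its Borel subsets already in the class $F_\sigma$, which is what makes condition (2) of Corollary \ref{th94} automatic and thereby allows a clean statement in terms of Borel isomorphisms. Everything else is routine bookkeeping, identifying Baire notions with Borel notions on metrizable spaces, so no new argument beyond the previously established Theorem \ref{th9} and its corollaries is required.
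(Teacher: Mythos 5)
Your proposal is correct and follows essentially the route the paper intends: Corollary~\ref{th93} is the $\alpha=\omega_1$ case of Corollary~\ref{th92} combined with the observation that Baire sets and Borel sets (hence Baire and Borel isomorphisms) coincide on metrizable spaces, since every closed set in a metric space is a zero-set. Your extra step for the converse --- verifying the hypotheses of Corollary~\ref{th94} by noting that in a $\sigma$-set the classes $F_\sigma$ and $G_\delta$ coincide and therefore form a $\sigma$-algebra containing all Borel sets --- is sound and is the same collapsing argument that underlies the paper's Theorem~\ref{th9} in this case.
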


Recall that Baire order $ord(C(X))=\min \{\beta :
B_{\beta}(X)=B_{\beta+1}(X)\}$.

\begin{corollary}\label{cr1} Suppose that $B_{\alpha}(X)$ is sequentially separable for a Tychonoff space $X$ and $1<\alpha\leq \omega_1$.
Then Baire order $ord(C(X))\leq \alpha$ and $ord(C(X))<\omega_1$.
\end{corollary}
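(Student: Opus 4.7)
The plan is to apply Corollary~\ref{th92} to obtain a Baire isomorphism $\varphi\colon X \mapsto Y$ of class $\alpha$ onto a $\sigma$-set $Y$, and to consider the pullback $F(f) = f \circ \varphi^{-1}$. Since $\varphi$ is a bijection, $F$ is a bijection of $\mathbb{R}^X$ onto $\mathbb{R}^Y$ which is a homeomorphism with respect to the pointwise-convergence topologies; in particular, $F$ commutes with sequential closure, so that $F([A]_{seq}) = [F(A)]_{seq}$ for every $A \subseteq \mathbb{R}^X$. This turns the Baire hierarchy of $X$ into a problem about the Baire hierarchy of the much simpler space $Y$.

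For $1 < \alpha < \omega_1$ I would first extract from the proof of Theorem~\ref{th9} the equality $F(B_\alpha(X)) = B_1(Y)$, which was obtained there through the chain $C(Y) \subseteq F(B_\alpha(X)) \subseteq B_1(Y) \subseteq B_\alpha(Y) \subseteq F(B_\alpha(X))$ together with the closure of $F(B_\alpha(X))$ under pointwise sequential limits. Then, using $B_{\alpha+1}(X) = [B_\alpha(X)]_{seq}$, the homeomorphism property of $F$, and the hierarchy collapse $B_2(Y) = B_1(Y)$ on the $\sigma$-set $Y$, I compute
\[
F(B_{\alpha+1}(X)) = [F(B_\alpha(X))]_{seq} = [B_1(Y)]_{seq} = B_2(Y) = B_1(Y) = F(B_\alpha(X)).
\]
Injectivity of $F$ then forces $B_{\alpha+1}(X) = B_\alpha(X)$, so $ord(C(X)) \leq \alpha < \omega_1$ in one stroke.

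For the remaining case $\alpha = \omega_1$ the argument above only yields the vacuous inequality $ord(C(X)) \leq \omega_1$, so the strict bound $ord(C(X)) < \omega_1$ must be proved separately; the natural route is to exploit sequential separability itself. Pick a countable sequentially dense $S \subseteq B(X) = \bigcup_{\xi<\omega_1} B_\xi(X)$. Each $s \in S$ lies in some $B_{\xi_s}(X)$ with $\xi_s < \omega_1$, and $\xi^* := \sup_{s \in S} \xi_s < \omega_1$ as a countable supremum of countable ordinals. Hence $S \subseteq B_{\xi^*}(X)$ and
\[
B(X) = [S]_{seq} \subseteq [B_{\xi^*}(X)]_{seq} = B_{\xi^*+1}(X) \subseteq B(X),
\]
so $B_{\xi^*+1}(X) = B(X) = B_{\xi^*+2}(X)$, giving $ord(C(X)) \leq \xi^*+1 < \omega_1 = \alpha$.

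The only genuinely technical point is the identification $F(B_\alpha(X)) = B_1(Y)$ in the first case: it has to be read off from the proof of Theorem~\ref{th9} rather than from its statement, since the statement only records the existence of the Baire isomorphism and not the exact image of $F$. Once that identification is in hand, everything else is routine bookkeeping with the Baire hierarchy, the $\sigma$-set collapse on $Y$, and the transport of pointwise sequential limits through the homeomorphism $F$.
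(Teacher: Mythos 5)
Your proof is correct and follows essentially the same route as the paper: both rest on the Baire isomorphism onto a $\sigma$-set $Y$ from Corollary~\ref{th92}, the identification $F(B_\alpha(X))=B_1(Y)$ extracted from the proof of Theorem~\ref{th9}, and the countable-supremum argument for the bound $ord(C(X))<\omega_1$. The only (harmless) variation is in the first part: you compute $F(B_{\alpha+1}(X))=[B_1(Y)]_{seq}=B_1(Y)$ directly via the homeomorphism $F$ and the collapse $B_2(Y)=B_1(Y)$, whereas the paper reapplies Theorem~\ref{th9} at each class $\beta\geq\alpha$ after observing that a class-$\alpha$ isomorphism is also of class $\beta$.
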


\begin{proof} If exists a Baire isomorphism $F$ of class $\alpha$ from a space $X$ onto
 a $\sigma$-set $Y$, then $F$ is a Baire isomorphism $F$ of class
 $\beta$ for every $\beta \geq \alpha$. By Theorem
 \ref{th9}, $F(B_{\alpha}(X))=B_1(Y)=F(B_{\beta}(X))$ for every $\beta \geq
 \alpha$. It follows that $ord(C(X))\leq \alpha$.

Note that if $B(X)$ is sequentially separable, then there exists a
countable sequentially dense subset $S\subset B(X)$, and, hence,
$S\subset B_{\alpha'}(X)$ for some $\alpha'<\omega_1$. It follows
that $B_{\alpha'}(X)$ is also sequentially separable,
$B_{\alpha'}(X)=B(X)$ and $ord(C(X))<\omega_1$.

\end{proof}

 The basic existence result are given in the next theorem.

 \begin{theorem}(Lebesgue, \cite{leb})\label{th666} If $X$ is a complete metric space
 containing a non-empty perfect subset, then for all $\alpha<
 \omega_1$, $B_{\alpha+1}(X)\setminus B_{\alpha}(X)\neq \emptyset$.
\end{theorem}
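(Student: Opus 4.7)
The plan is to reduce to the case of the Cantor space $C=2^{\omega}$, manufacture inside $C$ a set $D$ of class exactly $\alpha$, use its characteristic function as the witness, and then transfer this witness to $X$. First I would invoke the classical Cantor-scheme construction: any complete metric space that contains a non-empty perfect set contains a homeomorphic copy of $2^{\omega}$ as a closed subspace, so fix such a $C\subseteq X$. It suffices to exhibit $g\in B_{\alpha+1}(C)\setminus B_{\alpha}(C)$, because by the Kuratowski extension theorem for Baire functions on closed subsets of metric spaces the bounded function $g$ extends to an $f\in B_{\alpha+1}(X)$; if $f$ lay in $B_{\alpha}(X)$ then $f|_C=g$ would lie in $B_{\alpha}(C)$, a contradiction.

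The core of the argument is to produce a set $D\subseteq C$ with $D\in Z_{\alpha}(C)\setminus CZ_{\alpha}(C)$. Granting such $D$, its characteristic function $\chi_D$ satisfies $\chi_D\notin B_{\alpha}(C)$: indeed, $\chi_D\in B_{\alpha}(C)$ would force both $D\in Z_{\alpha}(C)$ and $D\in CZ_{\alpha}(C)$, i.e.\ $D\in A_{\alpha}(C)$, contradicting the choice of $D$. On the other hand, since $D\in Z_{\alpha}(C)\subseteq Z_{\alpha+1}(C)$ and also $D\in Z_{\alpha}(C)\subseteq (Z_{\alpha}(C))_{\sigma}=CZ_{\alpha+1}(C)$ by the Hausdorff-Lebesgue formulas stated in the excerpt, we have $D\in A_{\alpha+1}(C)$, so $\chi_D\in B_{\alpha+1}(C)$, as required.

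To build $D$, I would construct by transfinite recursion on $\alpha<\omega_1$ a $CZ_{\alpha}$-universal set $V_{\alpha}\subseteq C\times C$, meaning $V_{\alpha}\in CZ_{\alpha}(C\times C)$ and $\{V_{\alpha}^{c} : c\in C\}=CZ_{\alpha}(C)$, where $V_{\alpha}^{c}=\{x\in C : (c,x)\in V_{\alpha}\}$. The base case $\alpha=0$ uses a fixed countable basis $\{B_n\}$ of $C$, setting $V_0=\{(c,x) : c(n)=1\text{ and } x\in B_n\text{ for some }n\}$, whose sections enumerate all open subsets of $C$. Successor and limit steps use the Hausdorff-Lebesgue formulas $CZ_{\alpha+1}=(Z_{\alpha})_{\sigma}$ and $CZ_{\lambda}=(\bigcup_{\xi<\lambda}Z_{\xi})_{\sigma\delta}$ from the excerpt, combined with a fixed continuous coding bijection $C\to C^{\omega}$ (and, at limits, with a fixed cofinal sequence $\xi_n\nearrow\lambda$), to bundle a countable system of codes from the previous stages into a single code. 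With $V_{\alpha}$ in hand, set $D=\{x\in C : (x,x)\notin V_{\alpha}\}$; the diagonal embedding $\Delta(x)=(x,x)$ is continuous, so $\Delta^{-1}(V_{\alpha})\in CZ_{\alpha}(C)$ and hence $D\in Z_{\alpha}(C)$. If $D$ were in $CZ_{\alpha}(C)$, then by universality $D=V_{\alpha}^{c}$ for some $c\in C$, and then $c\in D\iff (c,c)\notin V_{\alpha}\iff c\notin V_{\alpha}^{c}\iff c\notin D$, a contradiction.

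The step I expect to be the main technical obstacle is the clean formalization of the universal-set construction at limit ordinals: the formula $CZ_{\lambda}=(\bigcup_{\xi<\lambda}Z_{\xi})_{\sigma\delta}$ forces one to code simultaneously a choice of level $\xi<\lambda$ and a double sequence of codes from the universal sets $V_{\xi_n}$ obtained at earlier stages, and getting the bookkeeping right (so that every $CZ_{\lambda}$ set really does appear as some section) is the delicate point. A secondary technicality is verifying that the Kuratowski-type extension of $\chi_D$ from the closed subspace $C$ to $X$ genuinely lies in $B_{\alpha+1}(X)$; here, separating $D$ from $C\setminus D$ inside $X$ by ambient $Z_{\alpha+1}(X)$ and $CZ_{\alpha+1}(X)$ sets whose intersections with $C$ recover $D$ and $C\setminus D$ suffices, and this follows from the standard fact that for closed $C$ in a metric space $X$ one has $Z_{\beta}(C)=Z_{\beta}(X)\cap C$.
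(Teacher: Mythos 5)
The paper states this theorem as a classical result of Lebesgue, cited from the 1905 paper, and gives no proof of its own; your reconstruction via a copy of $2^{\omega}$, a $CZ_{\alpha}$-universal set, diagonalization to get $D\in Z_{\alpha}\setminus CZ_{\alpha}$, and the characteristic function $\chi_D\in B_{\alpha+1}\setminus B_{\alpha}$ is exactly the standard (essentially Lebesgue's original) argument and is correct in outline. The only slip is notational: the paper's limit-level formula is $CZ_{\lambda}(X)=(\bigcup_{\xi<\lambda}Z_{\xi}(X))_{\delta\sigma}$ (countable unions of countable intersections), not $(\cdot)_{\sigma\delta}$ as you wrote, but this does not affect the construction.
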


\begin{theorem}($\check{C}$oban, \cite{cob}, Jayne, \cite{j})\label{th667} If $X$ is a compact Hausdorff space
 containing a non-empty perfect subset, then for all $\alpha<
 \omega_1$,

 $B_{\alpha+1}(X)\setminus B_{\alpha}(X)\neq \emptyset$.
\end{theorem}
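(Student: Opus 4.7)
My plan is to reduce the statement to the analogous result in complete metric spaces (Theorem~\ref{th666}) by embedding a Cantor set into the perfect subset of $X$ and transferring a witness function back and forth.

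First, since $X$ is compact Hausdorff and contains a non-empty perfect subset $P$, I would invoke the classical theorem (a compact-Hausdorff refinement of Cantor--Bendixson, going back to Čech and Alexandrov) that any compact Hausdorff space with a non-empty dense-in-itself closed subset contains a topological copy of the Cantor set $C \cong 2^{\omega}$. Because $C$ is compact and $X$ is Hausdorff, $C$ is closed in $X$.

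Second, apply Theorem~\ref{th666} to the complete metric space $C$, which is perfect: for each $\alpha<\omega_1$ there exists $f\in B_{\alpha+1}(C)\setminus B_{\alpha}(C)$. Replacing $f$ by $\arctan f$ if necessary, I may assume $f$ is bounded, since $\arctan$ is a homeomorphism of $\mathbb{R}$ and so preserves Baire classes under composition.

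Third — the crucial step — I would extend $f$ to a bounded function $\widetilde{f}\in B_{\alpha+1}(X)$ with $\widetilde{f}|_C=f$. This requires the Baire extension theorem for closed subsets of compact Hausdorff spaces: every bounded function in $B_{\beta}(C)$ extends to a function in $B_{\beta}(X)$. The case $\beta=0$ is Tietze's theorem; the inductive step exploits the fact that a cozero (more generally, $CZ_{\xi}$) subset of $C$ can be written as the restriction of a $CZ_{\xi}$ subset of $X$, using normality of $X$ to produce the extending zero-sets, with the usual bookkeeping at successor and limit ordinals. This is precisely the contribution of Čoban \cite{cob} and Jayne \cite{j}, and it is the main obstacle: without compactness one cannot guarantee this Tietze-type extension at every Baire class.

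Finally, I would argue by contradiction. Suppose $\widetilde{f}\in B_{\alpha}(X)$. The restriction map $g\mapsto g|_C$ sends $B_{\beta}(X)$ into $B_{\beta}(C)$ for every $\beta$: continuous functions restrict to continuous functions, and if $g=\lim_n g_n$ pointwise with $g_n\in\bigcup_{\xi<\beta}B_{\xi}(X)$, then $g|_C=\lim_n g_n|_C$ with $g_n|_C\in\bigcup_{\xi<\beta}B_{\xi}(C)$ by transfinite induction. Hence $f=\widetilde{f}|_C\in B_{\alpha}(C)$, contradicting the choice of $f$. Therefore $\widetilde{f}\in B_{\alpha+1}(X)\setminus B_{\alpha}(X)$, as required.
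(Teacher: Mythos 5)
The paper offers no proof of this statement---it is quoted as a classical result of $\check{C}$oban and Jayne---so the only question is whether your argument stands on its own, and it does not: the very first step is false. It is not true that a compact Hausdorff space containing a non-empty perfect subset must contain a topological copy of the Cantor set. The space $\beta\mathbb{N}\setminus\mathbb{N}$ is compact Hausdorff and has no isolated points (so it is a non-empty perfect subset of itself), yet it contains no non-trivial convergent sequences and hence no copy of $2^{\omega}$, which abounds in such sequences; the same holds for any infinite compact $F$-space. The perfect set theorem you invoke (``perfect $\Rightarrow$ contains a Cantor set'') is a Polish-space phenomenon, and this is exactly why Theorem~\ref{th667} is not a corollary of Lebesgue's Theorem~\ref{th666} and required a separate argument by $\check{C}$oban and Jayne. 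Once the closed copy of the Cantor set is gone, your extension and restriction steps have nothing to act on.

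The standard repair reverses the direction of the arrow: from a Cantor scheme inside the perfect set $P$ (using normality of $X$ to separate the pieces by disjoint cozero sets) one obtains a closed subset of $X$ mapping continuously \emph{onto} $2^{\omega}$, and then, by Tietze, a continuous surjection $\phi\colon X\to[0,1]$. One pulls back a witness $f\in B_{\alpha+1}([0,1])\setminus B_{\alpha}([0,1])$ supplied by Lebesgue's theorem to $f\circ\phi\in B_{\alpha+1}(X)$. The easy half is that composition with a continuous map does not raise Baire class; the genuinely hard half---the actual content of the $\check{C}$oban--Jayne theorem---is showing that $f\circ\phi\in B_{\alpha}(X)$ forces $f\in B_{\alpha}([0,1])$, which uses compactness in an essential way (for instance via a first-class selection for $\phi^{-1}$, or an analysis of images of Baire sets under perfect surjections). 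Your proposal never touches this step. The restriction/extension bookkeeping you do carry out in the last two paragraphs is routine and would be fine \emph{if} a closed Cantor set actually sat inside $X$, but in general it does not.
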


\begin{theorem}(P.R. Meyer, \cite{mey})\label{th668} If $X$ is a compact Hausdorff space which contains no non-empty perfect subsets, then $B_{1}(X)=B_{2}(X)$.
\end{theorem}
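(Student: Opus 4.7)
The plan is to proceed by transfinite induction on the Cantor--Bendixson rank of $X$. Since $X$ is compact Hausdorff with no nonempty perfect subset, $X$ is scattered: the iterated derived-set operator terminates at some least ordinal $\alpha$ with $X^{(\alpha)}=\emptyset$, and by compactness of $X$ this $\alpha$ is a successor. I will also use the standard fact that every compact scattered Hausdorff space is zero-dimensional, so that each point has a base of clopen neighborhoods. The base case, when the rank is $1$, is immediate: $X$ is finite discrete, every real-valued function on $X$ is continuous, and $B_{0}(X)=B_{1}(X)=B_{2}(X)=\mathbb{R}^{X}$.

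For the inductive step, assume the theorem for ranks $\leq \alpha$ and let $X$ have rank $\alpha+1$. The topmost derived set $X^{(\alpha)}$ is a closed discrete subspace of the compact space $X$, hence finite; by zero-dimensionality I would partition $X$ into pairwise disjoint clopen sets $U_1,\dots,U_m$, each containing exactly one point of $X^{(\alpha)}$. Because $B_0(X)$, $B_1(X)$ and $B_2(X)$ all respect such clopen partitions (pasting continuous functions across clopens yields a continuous function), it suffices to handle each $U_i$ separately, and I may thus assume $X^{(\alpha)}=\{p\}$ is a single point. Now given $f\in B_2(X)$, the restriction of $f$ to every clopen $V\subseteq X\setminus\{p\}$ lies in $B_2(V)$; since every such $V$ has Cantor--Bendixson rank $\leq\alpha$, the induction hypothesis delivers $f|_V\in B_1(V)$.

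To close the induction one has to produce a single sequence $h_k\in C(X)$ with $h_k\to f$ pointwise on $X$. The natural candidate is, for a growing family of clopens $V_k\subseteq X\setminus\{p\}$, to pick a continuous approximant of $f|_{V_k}$ provided by the induction, and to extend it by the constant $f(p)$ on the clopen complement $X\setminus V_k$; each such extension is continuous on $X$ by clopen-pasting, and the value at $p$ is automatically $f(p)$. The main obstacle is that in general $p$ is \emph{not} a $G_\delta$ point of $X$ (for example $X=\omega_1+1$), so there is no canonical decreasing sequence of clopen neighborhoods of $p$ on which to run a standard diagonal argument. One must therefore choose the exhausting clopens $V_k$ and the continuous approximants on them simultaneously, exploiting scatteredness to guarantee that every $x\in X\setminus\{p\}$ is eventually in some $V_k$ and that the chosen approximant at that stage is close to $f(x)$. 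Carrying out this diagonalization without a countable neighborhood base at $p$ is the heart of the argument.
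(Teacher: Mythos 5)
This theorem is quoted in the paper from Meyer's 1966 article without proof, so your argument has to stand on its own; it does not. Your setup is fine --- scatteredness, zero-dimensionality, finiteness of the top derived set, reduction to $X^{(\alpha)}=\{p\}$ --- but the proof stops exactly where the theorem actually lives. You write that carrying out the diagonalization without a countable neighborhood base at $p$ is ``the heart of the argument,'' and then you do not carry it out. Worse, the mechanism you propose cannot exist in the very example you cite: in $X=\omega_1+1$ with $p=\omega_1$, every clopen set missing $p$ is a compact subset of $\omega_1$, hence bounded, and a countable union of bounded subsets of $\omega_1$ is bounded; so there is \emph{no} countable increasing family of clopen sets $V_k\subseteq X\setminus\{p\}$ whose union is $X\setminus\{p\}$. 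The plan ``exhaust $X\setminus\{p\}$ by clopens and paste'' is therefore not merely unfinished --- it is impossible as stated.

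There is a second gap, independent of the first: even where a clopen exhaustion does exist (e.g.\ when $p$ is a $G_\delta$ point), knowing $f|_{V_k}\in B_1(V_k)$ gives you, for each $k$, a \emph{sequence} of continuous functions converging pointwise to $f|_{V_k}$, not a single continuous function uniformly or even pointwise close to $f$ on $V_k$. A naive diagonal over this doubly indexed family produces only a Baire class $2$ representation; that is precisely why $B_1\neq B_2$ in general. What the theorem really requires is a structural fact special to scattered compacta --- roughly, that a Baire function on such a space is determined by a countable amount of data (on $\omega_1+1$ every Baire function is eventually constant; on the one-point compactification of an uncountable discrete set it is constant off a countable set), which is what allows the second limit to be collapsed into the first. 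Neither this fact nor any substitute for it appears in your argument, so the proof is incomplete at its essential step.
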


Note that, if $X$ is an uncountable Polish space, then $B_1(X)$ is
sequentially separable (Theorem \ref{th32}), but, by Lebesgue
Theorem \ref{th666} and Corollary \ref{cr1}, $B_{\alpha}(X)$ is
not sequentially separable for $1<\alpha\leq \omega_1$. The same
is true for any uncountable analytic ($\bf \sum^1_1$) space $X$
since $X$ has a perfect subspace (see \cite{kur}).

By Theorem \ref{th668}, if $X$ is a metrizable compact space which
contains no non-empty perfect subsets, then $B_{1}(X)=B(X)$ is
sequentially separable.

\begin{proposition} There exists an example (an uncountable Polish space containing a non-empty perfect subset) a space $X$, such that
$B_1(X)$ is sequentially separable, but $B_{\alpha}(X)$ is not
sequentially separable for all $1<\alpha\leq \omega_1$.
\end{proposition}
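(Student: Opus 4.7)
The plan is to pick any uncountable Polish space containing a non-empty perfect subset, for example $X=\mathbb{R}$ (or $X=2^\omega$, or $X=[0,1]$), and show it has both properties. There is no real choice to make: the reason the example exists is structural, resting on the contrast between Theorem~\ref{th32} and the combination of Lebesgue's Theorem~\ref{th666} with Corollary~\ref{cr1}.

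First, Theorem~\ref{th32} directly applies since $X$ is a separable metric space, yielding that $B_1(X)$ is sequentially separable. Second, I need to argue that $B_\alpha(X)$ fails to be sequentially separable for every $1<\alpha\leq\omega_1$. The idea is to show that the Baire order $ord(C(X))$ equals $\omega_1$. By Theorem~\ref{th666}, since $X$ is a complete metric space containing a non-empty perfect subset, for every $\alpha<\omega_1$ we have $B_{\alpha+1}(X)\setminus B_\alpha(X)\neq\emptyset$. Therefore $B_\beta(X)\neq B_{\beta+1}(X)$ for all $\beta<\omega_1$, so by the definition of $ord(C(X))=\min\{\beta:B_\beta(X)=B_{\beta+1}(X)\}$, no $\beta<\omega_1$ witnesses equality, and hence $ord(C(X))\not<\omega_1$.

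Third, suppose toward a contradiction that $B_\alpha(X)$ is sequentially separable for some $\alpha$ with $1<\alpha\leq\omega_1$. Then Corollary~\ref{cr1} yields $ord(C(X))<\omega_1$ (the second conclusion of the corollary covers the case $\alpha=\omega_1$ as well, while for $\alpha<\omega_1$ we already get $ord(C(X))\leq\alpha<\omega_1$). This contradicts the conclusion of the previous paragraph. Hence $B_\alpha(X)$ is not sequentially separable for any $1<\alpha\leq\omega_1$.

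No step in this argument is an obstacle in the technical sense; the only thing to be a little careful about is checking that Corollary~\ref{cr1} handles the endpoint $\alpha=\omega_1$ (i.e.\ the case of $B(X)$) as well as the intermediate values, which it does via its second assertion $ord(C(X))<\omega_1$. Thus $X=\mathbb{R}$ (or any uncountable Polish space with a non-empty perfect subset) is the promised example.
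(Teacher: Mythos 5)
Your proposal is correct and takes essentially the same route as the paper, which also takes an uncountable Polish space, cites Theorem~\ref{th32} for the sequential separability of $B_1(X)$, and combines Lebesgue's Theorem~\ref{th666} with the corollary bounding the Baire order ($ord(C(X))\leq\alpha$ and $ord(C(X))<\omega_1$ when $B_\alpha(X)$ is sequentially separable) to exclude sequential separability of $B_\alpha(X)$ for $1<\alpha\leq\omega_1$. Your added care about the endpoint $\alpha=\omega_1$ being covered by the second assertion of that corollary is a detail the paper leaves implicit but is handled exactly as you describe.
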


\medskip

A natural generalization of a $\sigma$-set is the notion of
$Q$-set. A set of reals $X$ is a $Q$-set if every subset of $X$ is
a relative $F_{\sigma}$. Assuming Martin's axiom ($MA$) every set
of reals of cardinality less than the continuum is a $Q$-set
(Theorems in \cite{ms} or \cite{ru}).

\medskip

\begin{proposition}(MA) Let $X$ be a set of reals,
$|X|<\mathfrak{c}$. Then $B_{1}(X)=R^{X}$ is sequentially
separable.

\end{proposition}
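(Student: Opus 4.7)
The plan is to combine two facts: that under the hypothesis every subset of $X$ is an $F_{\sigma}$ in $X$, which collapses $B_1(X)$ onto all of $\mathbb{R}^X$, and that sequential separability of $B_1(X)$ is automatic for separable metric $X$ by Theorem \ref{th32}.

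First I would invoke the fact, recalled just before the statement, that under $MA$ every set of reals of cardinality less than $\mathfrak{c}$ is a $Q$-set. Thus for our $X$ every subset is a relative $F_{\sigma}$ in $X$.

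Next I would show $B_1(X)=\mathbb{R}^X$. By the characterization recalled in Section 2 (``$f\in B_1(X)$ iff $f^{-1}(G)$ is a $Z_{\sigma}$-set for every open $G\subseteq \mathbb{R}$''), together with the fact that in a perfect normal space the $Z_{\sigma}$-sets coincide with the $F_{\sigma}$-sets (the space $X$ is metrizable, hence perfectly normal), it suffices to verify that for every $f:X\to\mathbb{R}$ and every open $G\subseteq \mathbb{R}$, the preimage $f^{-1}(G)$ is $F_{\sigma}$ in $X$. But this is immediate from the $Q$-set property, since $f^{-1}(G)$ is some subset of $X$, and \emph{every} subset of $X$ is $F_{\sigma}$. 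Hence every real-valued function on $X$ belongs to $B_1(X)$, i.e.\ $B_1(X)=\mathbb{R}^X$.

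Finally I would apply Theorem \ref{th32} directly: since $X$ is a set of reals it is a separable metric space, and so $B_1(X)$ is sequentially separable. Combining with the previous step yields that $\mathbb{R}^X=B_1(X)$ is sequentially separable. There is no real obstacle here; the statement is essentially a direct corollary of the $MA$-consequence about $Q$-sets, the standard Lebesgue-type characterization of first Baire class, and Theorem \ref{th32}.
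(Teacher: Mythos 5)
Your proof is correct and follows exactly the route the paper intends: the paper states this proposition without a written proof, relying on the immediately preceding remark that under $MA$ every set of reals of size less than $\mathfrak{c}$ is a $Q$-set, from which $B_1(X)=\mathbb{R}^X$ follows by the Lebesgue-type characterization of $B_1$, and sequential separability then comes from Theorem \ref{th32}. Nothing is missing.
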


\begin{proposition}(MA) Let $X$ be a Tychonoff space, $iw(X)=\aleph_0$ and
$|X|<\mathfrak{c}$. Then $B(X)$ is sequentially separable.
\end{proposition}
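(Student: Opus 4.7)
The plan is to apply Corollary~\ref{th94} using a condensation supplied by the $i$-weight hypothesis, leveraging $MA$ to force the codomain to be a $Q$-set. Since $iw(X)=\aleph_0$, I would first fix a continuous bijection $\varphi:X\to Y$ onto a separable metrizable space $Y$; as $\varphi$ is a bijection, $|Y|=|X|<\mathfrak{c}$.

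The decisive step, and the main obstacle, is to upgrade the classical fact invoked just above (every set of reals of size $<\mathfrak{c}$ is a $Q$-set under $MA$) to the statement that every separable metrizable space $Y$ of cardinality $<\mathfrak{c}$ is a $Q$-set, i.e.\ every subset of $Y$ is $F_\sigma$ (equivalently $G_\delta$) in $Y$. Embedding $Y$ into a Polish space such as $\mathbb{R}^{\omega}$ and running the standard $\sigma$-centered $MA$ argument over a countable base of the ambient space yields this, since $G_\delta$-subsets of the ambient space restrict to $G_\delta$-subsets of $Y$. The transfer is essentially a routine port of the $\mathbb{R}$-argument, but one has to verify that the relative Borel complexity is preserved under the subspace topology.

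Granted that $Y$ is a $Q$-set, the two conditions of Corollary~\ref{th94} are almost immediate. For condition $(1)$, any open $U\subseteq Y$ is a cozero-set of the metrizable space $Y$, say $U=g^{-1}(\mathbb{R}\setminus\{0\})$ for some $g\in C(Y)$; then $\varphi^{-1}(U)=(g\circ\varphi)^{-1}(\mathbb{R}\setminus\{0\})$ is a cozero-set, and hence a Baire set, of $X$. For condition $(2)$, any Baire set $T\subseteq X$ has image $\varphi(T)\subseteq Y$, and the $Q$-set property makes $\varphi(T)$ an $F_\sigma$-set of $Y$ automatically, with no further use of the Baire structure of $T$. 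Corollary~\ref{th94} then delivers the sequential separability of $B(X)$.
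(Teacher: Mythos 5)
Your proof is correct and follows the route the paper intends (the proposition is stated there without a printed proof, but the surrounding text supplies exactly these ingredients): use $iw(X)=\aleph_0$ to condense $X$ onto a separable metrizable $Y$ with $|Y|=|X|<\mathfrak{c}$, invoke $MA$ to make $Y$ a $Q$-set (hence every subset of $Y$ is $F_\sigma$, so $Y$ is a $\sigma$-set), and feed the condensation into Corollary~\ref{th94} (equivalently, note it is a Baire isomorphism onto a $\sigma$-set and apply Corollary~\ref{th92}). The only point worth recording is the one you flag — that the classical $MA$ fact about sets of reals transfers to arbitrary separable metrizable spaces of size $<\mathfrak{c}$ — and your embedding argument (or the observation that such a $Y$ is automatically zero-dimensional, hence itself a set of reals) settles it.
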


By Theorem 9.1 in \cite{do}, $\mathfrak{b}=\min\{|X|: X$ is a
separable metrizable, but it is not a $\sigma$-set$\}$. Then we
have a next result.

\begin{proposition} Let $X$ be a Tychonoff space, $iw(X)=\aleph_0$ and
$|X|<\mathfrak{b}$. Then $B(X)$ is sequentially separable.
\end{proposition}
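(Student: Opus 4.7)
The plan is to mirror the proof of the preceding MA--proposition, replacing the $Q$-set property by the $\sigma$-set property, and applying Corollary~\ref{th94}. First, use $iw(X)=\aleph_0$ to fix a continuous bijection (condensation) $\varphi\colon X\to Y$ onto a separable metrizable space $Y$; then $|Y|=|X|<\mathfrak{b}$, so by the cited characterization of $\mathfrak{b}$ (Theorem 9.1 in \cite{do}) $Y$ is a $\sigma$-set. As a consequence the Borel hierarchy on $Y$ collapses at the second level: every Borel subset of $Y$ is simultaneously $F_{\sigma}$ and $G_{\delta}$, and $B_{1}(Y)=B_{\omega_{1}}(Y)=B(Y)$ is sequentially separable by Velichko's Theorem~\ref{th32} (indeed Velichko actually provides a countable $S_{Y}\subset C(Y)$ with $[S_{Y}]_{\mathrm{seq}}=B(Y)$).

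Next I would check the two conditions of Corollary~\ref{th94} for the bijection $\varphi$. Condition (1), that $\varphi^{-1}(U)$ is a Baire set of $X$ for every open $U\subseteq Y$, is immediate because $\varphi$ is continuous, so $\varphi^{-1}(U)$ is open and hence Baire in $X$. Condition (2) asks that $\varphi(T)$ be $F_{\sigma}$ in $Y$ for every Baire set $T\subseteq X$; because $Y$ is a $\sigma$-set, every Borel subset of $Y$ is already $F_{\sigma}$, so this reduces to showing that $\varphi(T)$ is Borel in $Y$. Once (1) and (2) are established, Corollary~\ref{th94} delivers the sequential separability of $B(X)$; alternatively, pulling $S_{Y}$ back by $\varphi$ produces a countable sequentially dense subset $\{f\circ\varphi:f\in S_{Y}\}\subset C(X)\subseteq B(X)$.

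The main obstacle is condition (2). In the preceding MA--version, $Y$ is actually a $Q$-set, so every subset of $Y$ is $F_{\sigma}$ and (2) holds for free. Here one has only the weaker $\sigma$-set property of $Y$, and so one must show that the particular subsets $\varphi(T)$ arising as images of Baire sets of $X$ are Borel in $Y$. This is where the hypothesis $|X|<\mathfrak{b}$ has to be used in a more delicate way: every such $\varphi(T)$ is a subset of $Y$ of cardinality strictly below $\mathfrak{b}$, and one combines this cardinality bound with the $\sigma$-set structure of $Y$ (forcing the relevant images into the collapsed Borel class $F_{\sigma}=G_{\delta}$) to land $\varphi(T)$ inside the Borel $\sigma$-algebra of $Y$. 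Filling in this step is the only part of the argument that cannot be imported verbatim from the $Q$-set--based proof of the MA--proposition.
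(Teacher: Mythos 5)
You have reconstructed exactly the route the paper has in mind: the paper states this proposition with no proof at all, placing it immediately after van Douwen's characterization of $\mathfrak{b}$ as the least cardinality of a separable metrizable space that is not a $\sigma$-set, so the intended argument is precisely your first two steps (condense onto a separable metrizable $Y$ with $|Y|<\mathfrak{b}$, observe that $Y$ is a $\sigma$-set, invoke Velichko's Theorem~\ref{th32} and Corollary~\ref{th94}). You have also put your finger on the genuine weak point: condition (2) of Corollary~\ref{th94}. In the preceding MA--proposition the condensed image is a $Q$-set, so \emph{every} subset of $Y$ is $F_{\sigma}$ and (2) is free; the $\sigma$-set property only gives this for Borel subsets of $Y$, and nothing forces $\varphi(T)$ to be Borel in $Y$ when $T$ is an arbitrary Baire (indeed, an arbitrary zero-) set of $X$, since the topology of $X$ may be strictly finer than that of $Y$.

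Your proposed repair of that step, however, cannot work. That $\varphi(T)$ has cardinality below $\mathfrak{b}$ makes it a $\sigma$-set \emph{as a subspace}, but says nothing about its position in the Borel hierarchy \emph{of} $Y$: the space $Y$ has only $\mathfrak{c}$ Borel subsets, while $X$ may have $2^{|X|}$ zero-sets. Concretely, let $X$ be discrete of cardinality $\omega_1$ in a model where $\omega_1<\mathfrak{b}$ but $2^{\omega_1}>2^{\omega}$ (such models exist; $\mathfrak{b}>\omega_1$, unlike $\mathfrak{p}>\omega_1$, does not decide $2^{\omega_1}$). Then $X$ is Tychonoff with $iw(X)=\aleph_0$, every subset of $X$ is a zero-set, and $B(X)=\mathbb{R}^{X}$ has cardinality $2^{\omega_1}>\mathfrak{c}$, whereas the sequential closure of any countable set in a Hausdorff space has cardinality at most $\mathfrak{c}$; so $B(X)$ is not sequentially separable. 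Thus the missing step is not merely delicate but consistently false, and the statement itself needs a stronger hypothesis --- for instance that $X$ is separable metrizable (then $X$ is itself a $\sigma$-set and $B(X)=B_1(X)$ is sequentially separable by Theorem~\ref{th32}), or that the condensed image is a $Q$-set, which is what the MA--version really uses.
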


On the other hand, it is consistent that there are no uncountable
$\sigma$-sets, in fact, it is consistent that every uncountable
set of reals has Baire order $\omega_1$ (Theorem 22 in
\cite{mil2}).

\medskip

\begin{proposition} It is  consistent that there are no uncountable
$\sigma$-sets, if $B_{\alpha}(X)$ is sequentially separable for a
Tychonoff space $X$ and $\alpha>1$, then $X$ is countable.

\end{proposition}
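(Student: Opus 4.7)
The plan is to derive this directly from Corollary \ref{th92} together with the consistency result cited just before the statement. The point is that the assumed model kills all uncountable $\sigma$-sets, and the characterization in Corollary \ref{th92} forces the domain $X$ to be bijective, as sets, with some $\sigma$-set.

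First I would invoke the consistency fact: by Theorem 22 in \cite{mil2}, it is consistent that every uncountable set of reals has Baire order $\omega_1$. Since an uncountable $\sigma$-set would have Baire order $1$ (every $G_\delta$ is $F_\sigma$ forces $B_1 = B$), this model contains no uncountable $\sigma$-sets. Thus it is consistent to assume that every $\sigma$-set is countable, and this is the hypothesis under which I would work for the remainder of the argument.

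Next, suppose $B_{\alpha}(X)$ is sequentially separable for some Tychonoff space $X$ and some $\alpha$ with $1<\alpha\leq \omega_1$. By Corollary \ref{th92} there exists a Baire isomorphism of class $\alpha$
\[
\varphi : X \longrightarrow Y
\]
from $X$ onto a $\sigma$-set $Y$. Since $\varphi$ is, in particular, a bijection, we have $|X|=|Y|$. By the consistency assumption, $Y$ must be countable, and hence so is $X$. This completes the proof.

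There is no real obstacle here: the statement is essentially a one-line deduction from Corollary \ref{th92}. The only subtle point worth making explicit in the write-up is the reduction from the cited form (Baire order $\omega_1$ for every uncountable set of reals) to the form actually used (no uncountable $\sigma$-sets); this is immediate since a $\sigma$-set of reals has $B_1 = B$ and therefore Baire order at most $1$.
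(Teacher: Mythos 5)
Your proof is correct and follows exactly the route the paper intends: the paper states this proposition without proof as an immediate consequence of Corollary~\ref{th92} together with the consistency of ``every uncountable set of reals has Baire order $\omega_1$'' (Theorem 22 in \cite{mil2}), which is precisely your argument. Your explicit remark that a $\sigma$-set has Baire order at most $1$ (so the cited model has no uncountable $\sigma$-sets) is the right bridging observation, and the bijectivity of the Baire isomorphism then gives $|X|=|Y|\leq\aleph_0$ as you say.
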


We denote:

$\bullet$ $Z^{\alpha}=(\bigcup\limits_{\xi<\alpha}
Z_{\xi}(X))_{\delta}$;

 $\bullet$ $Z^{\alpha}_{\Omega}$ --- the family of countable
 $Z^{\alpha}$
 $\omega$-covers of $X$;

$\bullet$ $Z^{\alpha}_\Gamma$ --- the family of countable
$Z^{\alpha}$ $\gamma$-covers of $X$.

A set $X$ is called a $Z^{\alpha}$-cover $\gamma$-set iff every
countable $\omega$-cover of $X$ by $Z^{\alpha}$ sets contains a
$\gamma$-cover.

 Being a $Z^{\alpha}$-cover $\gamma$-set is equivalent to saying that for
 any $\omega$-sequence of countable $Z^{\alpha}$ $\omega$-covers of $X$
 we can choose one element from each and get a $\gamma$-cover of
 $X$ -- this is denoted $S_1(Z^{\alpha}_{\Omega},Z^{\alpha}_{\Gamma})$.

\medskip

A standard diagonalization trick gives the following.

\begin{proposition}\label{pr104} A $Z^{\alpha}$-cover $\gamma$-set $X$ is equivalent
to $X$ $\models$ $S_{1}(Z^{\alpha}_{\Omega},
Z^{\alpha}_{\Gamma})$.
\end{proposition}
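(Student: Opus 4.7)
The plan is to verify both directions, with the first being immediate and the second carrying the combinatorial content. For the easy implication $S_{1}(Z^{\alpha}_{\Omega},Z^{\alpha}_{\Gamma}) \Rightarrow {}$``$Z^{\alpha}$-cover $\gamma$-set'', feed the constant sequence $\mathcal{U}_n:=\mathcal{U}$ (for an arbitrary countable $Z^{\alpha}$ $\omega$-cover $\mathcal{U}$) to $S_{1}$; the resulting $\{b_n\}$ lies in $Z^{\alpha}_{\Gamma}$ by definition, i.e.\ it is an infinite $\gamma$-subcover sitting inside $\mathcal{U}$.

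For the converse, enumerate each $\mathcal{U}_n=\{U^{n}_{k}:k\in\omega\}$ and consolidate the whole sequence into a single countable family
\[
\mathcal{V}=\{V_\sigma:\sigma\in\omega^{<\omega}\setminus\{\emptyset\}\},\qquad V_\sigma:=\bigcap_{i=1}^{|\sigma|}U^{i}_{\sigma(i)}.
\]
Since $\bigcup_{\xi<\alpha}Z_{\xi}(X)$ is closed under finite intersections and $Z^{\alpha}$ is its $\delta$-closure, $\mathcal{V}\subset Z^{\alpha}$; countability is clear; and the length-one slice already witnesses that $\mathcal{V}$ is an $\omega$-cover, since for any finite $F\subset X$ there is $k$ with $F\subset U^{1}_{k}=V_{(k)}$. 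The $Z^{\alpha}$-cover $\gamma$-set assumption therefore produces a $\gamma$-subcover $\{V_{\sigma_m}:m\in\omega\}\subset\mathcal{V}$.

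Assuming for the moment that $|\sigma_m|\to\infty$, I would define $m(n):=\min\{m:|\sigma_m|\ge n\}$ and $b_n:=U^{n}_{\sigma_{m(n)}(n)}\in\mathcal{U}_n$; note that $m(n)\to\infty$ as $n\to\infty$. For any $x\in X$ the $\gamma$-property of $\{V_{\sigma_m}\}$ yields $M_x$ with $x\in V_{\sigma_m}$ whenever $m\ge M_x$, so $x\in U^{i}_{\sigma_m(i)}$ for every $i\le|\sigma_m|$ and every $m\ge M_x$. As soon as $n$ is large enough that $m(n)\ge M_x$, this forces $x\in U^{n}_{\sigma_{m(n)}(n)}=b_n$, and so $\{b_n\}$ is a $\gamma$-cover, completing the selection.

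The step I expect to be the main obstacle is precisely the reduction to $|\sigma_m|\to\infty$. If the lengths $|\sigma_m|$ are bounded, a pigeonhole argument isolates a single fixed length $n^{*}$ occurring infinitely often, and the resulting $\gamma$-subcover uses only $\mathcal{U}_1,\dots,\mathcal{U}_{n^{*}}$, giving no information about $\mathcal{U}_n$ for $n>n^{*}$. The remedy is to reapply the hypothesis to the tail family $\mathcal{V}_{>n^{*}}:=\{V_\sigma:|\sigma|>n^{*}\}$, which is again a countable $Z^{\alpha}$ $\omega$-cover by the same verification applied to $(\mathcal{U}_n)_{n>n^{*}}$, and then to iterate. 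Either the process terminates at some stage in an unbounded-length $\gamma$-subcover (reducing us to the previous paragraph, where the finitely many unfilled indices $n$ can be assigned arbitrary $b_n\in\mathcal{U}_n$ without spoiling the $\gamma$-cover property), or it yields a strictly increasing sequence $n^{*}_1<n^{*}_2<\cdots$ exhausting $\omega$, from which the selections produced on each block can be spliced into one sequence $(b_n)$. This iterative splicing is the ``standard diagonalization trick'' alluded to in the paper.
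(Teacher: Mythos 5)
Your first direction is fine, and so is the unbounded-length half of the converse (one small point worth adding there: the selected set $\{b_n:n\in\omega\}$ is automatically infinite, because a value recurring for infinitely many $n$ would have to contain every $x\in X$ and hence equal $X$, which is excluded from an $\omega$-cover). The genuine problem is exactly where you locate it, but your remedy does not close it. In the iterated case you produce, for each block $(n^{*}_{j-1},n^{*}_{j}]$, a $\gamma$-subcover $G_j$ of $\{V_\sigma:|\sigma|>n^{*}_{j-1}\}$, and then you extract from $G_j$ only finitely many of its members --- one $V_\sigma$ per index $n$ in the block, from which $b_n=U^{n}_{\sigma(n)}$ is read off. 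The $\gamma$-property of $G_j$ only guarantees that a fixed point $x$ belongs to all but finitely many members of $G_j$; the finitely many members you actually use may all lie among the exceptions, and this can happen for every block $j$ simultaneously. Hence the spliced sequence $(b_n)$ need not contain $x$ cofinally, i.e.\ it need not be a $\gamma$-cover. This is the $\alpha_1$-type amalgamation problem: choosing finitely many elements from each of infinitely many $\gamma$-covers does not in general yield a $\gamma$-cover, and the corresponding amalgamation property is known to fail even for classical $\gamma$-sets, so there is no cheap way to rescue the splicing.

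For comparison: the paper gives no argument at all --- its proof consists of the sentence that the argument is ``like that of the corresponding result'' of Gerlits and Nagy --- so the entire mathematical content of the proposition is precisely the step you could not close. The known proofs avoid the bounded-length case rather than repair it. One standard route is to first derive from the hypothesis a finitary selection principle of $S_{fin}(\Omega,\Omega)$ type, replace each $\mathcal{U}_n$ by a \emph{finite} subfamily $\mathcal{F}_n$ whose union is still an $\omega$-cover, and only then extract a $\gamma$-subcover of $\bigcup_n\mathcal{F}_n$: since each level now contributes only finitely many sets, an infinite $\gamma$-subcover is forced to meet unboundedly many levels, after which your $m(n)$-diagonalization (applied to the refining covers $\{U_1\cap\cdots\cap U_n\}$) finishes the proof. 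As written, the converse direction of your proposal is not proved; you need either to import such a lemma or to supply some other device that rules out bounded lengths outright.
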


\begin{proof} The proof of this is like that of the corresponding result in
\cite{gn}.

\end{proof}

We recall  concept $Z_{\sigma}$-mapping:

a map $f:X \mapsto Y$ be called a $Z_{\sigma}$-map, if $f^{-1}(Z)$
is a $Z_{\sigma}$-set of $X$ for any zero-set $Z$ of $Y$.

\medskip





\medskip


\medskip
In \cite{ospy}, Osipov and Pytkeev have established criterion for
$B_{1}(X)$ to be strongly sequentially separable.

\begin{theorem}(Osipov, Pytkeev) \label{th5} A function
space $B_1(X)$ is strongly sequentially separable iff $X$ has a
coarser second countable topology, and for any bijection $\varphi$
from a space $X$ onto a
 separable metrizable space $Y$, such that  $\varphi^{-1}(U)$ --- $Z_{\sigma}$-set of $X$ for any open set
$U$ of $Y$, the space $Y$ has the property $\gamma$.
\end{theorem}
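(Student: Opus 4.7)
The plan is to reduce both directions to the Gerlits--Nagy correspondence (property $\gamma$ for $Y$ iff $C_p(Y)$ is Frechet-Urysohn) together with Theorem \ref{th3}, using a bijection $\varphi: X\to Y$ as a bridge. The $Z_\sigma$-preimage condition on $\varphi$ is precisely what guarantees that any composition $f\circ\varphi$ with $f\in C(Y)$ lands in $B_1(X)$, so $\varphi$ transports information back and forth between $B_1(X)$ and $C_p(Y)$.

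For $(\Rightarrow)$, separability of $B_1(X)$ gives $iw(X)=\aleph_0$, which is the coarser second-countable topology. Given any admissible $\varphi: X\to Y$, I would fix a countable dense $D\subset C_p(Y)$ and set $S=\{f\circ\varphi : f\in D\}\subset B_1(X)$. A short Tychonoff-style interpolation argument (solving finite problems on $Y$ and pulling them back) shows $S$ is dense in $B_1(X)$ in the pointwise topology. By the strong sequential separability hypothesis $S$ must be sequentially dense, and transferring convergent sequences back through $\varphi$ shows that $D$ is sequentially dense in $C_p(Y)$. Since $D$ was arbitrary, $C_p(Y)$ is strongly sequentially separable, and Theorem \ref{th3} together with $Y$ being second countable forces $Y\models S_1(\Omega,\Gamma)$, i.e., the $\gamma$-property.

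For $(\Leftarrow)$, $iw(X)=\aleph_0$ gives separability of $B_1(X)$ immediately, and the point is to show that any countable dense $T\subset B_1(X)$ is sequentially dense. The idea is to tailor the target space to each $g\in B_1(X)$ separately: let $Y_g$ be the set $X$ equipped with the topology generated by preimages of open reals under the countably many functions in $T\cup\{g\}$. This $Y_g$ is second countable and Hausdorff (since $T$ is dense in $B_1(X)\supset C_p(X)$, it separates points), hence separable metrizable. The identity $X\to Y_g$ meets the hypothesis because $Z_\sigma(X)$ is closed under finite intersections and countable unions, so each subbasic, hence every, $\tau_{T\cup\{g\}}$-open set is $Z_\sigma$ in $X$. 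By assumption $Y_g$ has $\gamma$, so $C_p(Y_g)$ is Frechet-Urysohn. By construction both $T$ and $g$ sit inside $C_p(Y_g)\subset B_1(X)$, so $T$ inherits density in $C_p(Y_g)$ from its density in $B_1(X)$, and Frechet-Urysohnness delivers a sequence in $T$ converging pointwise to $g$.

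The main obstacle I anticipate is precisely the $(\Leftarrow)$ construction: the naive choice of a single $Y$ generated by $T$ alone makes $T$ densely sit in a well-behaved $C_p(Y)$, but does not guarantee that a given target $g\in B_1(X)$ lies in $C_p(Y)$, which is what is needed to extract a convergent sequence. Adding $\{g\}$ to the generating family is the technical trick that lets the $\gamma$-property hypothesis be applied pointwise in $g$; the remaining work is routine verification of denseness and of the closure properties of $Z_\sigma$-sets.
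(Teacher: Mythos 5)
Your argument is correct, but note that the paper itself gives no proof of this statement: Theorem \ref{th5} is quoted from \cite{ospy}, so there is no internal proof to compare against. On its own merits your proposal is sound, and in fact it is very much in the spirit of the techniques the paper does use for the neighbouring results: the $(\Leftarrow)$ construction of the second-countable topology $\tau_{T\cup\{g\}}$ generated by preimages of open reals under a countable family of Baire-one functions is exactly the device used in the proof of Theorem \ref{th9} (where $Y=(X,\tau)$ is built from a countable sequentially dense set), and the transport of dense/sequentially dense sets through the bijection in $(\Rightarrow)$ parallels the proof of Theorem \ref{th25}. The two points worth making explicit when you write this up are (i) that $T$ separates points of $X$ (it does, since for $x\neq y$ the set of $f$ with $f(x)<1/2<3/2<f(y)$ is nonempty open in $B_1(X)$), so $Y_g$ is indeed Hausdorff and hence separable metrizable, and (ii) that every open set of $Y_g$, not just the subbasic ones, has $Z_{\sigma}$ preimage --- which follows, as you say, from closure of $Z_{\sigma}$-sets under finite intersections and countable unions together with second countability of $Y_g$. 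With those verifications spelled out, both directions go through.
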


\begin{theorem}\label{th25} For a Tychonoff space $X$ and $0<\alpha\leq \omega_1$, the following statements are
equivalent:

\begin{enumerate}

\item $B_{\alpha}(X)$ is strongly sequentially separable;

\item $iw(X)=\aleph_0$ and $X$ $\models$
$S_{1}(Z^{\alpha}_{\Omega}, Z^{\alpha}_{\Gamma})$;

\item $iw(X)=\aleph_0$ and $X$ is a $Z^{\alpha}$-cover
$\gamma$-set.

\end{enumerate}

\end{theorem}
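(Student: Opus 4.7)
Plan. The equivalence $(2)\Leftrightarrow(3)$ is Proposition~\ref{pr104}, so the plan is to establish $(1)\Leftrightarrow(3)$; throughout, the clause $iw(X)=\aleph_0$ will be used as equivalent to separability of $B_{\alpha}(X)$ via $iw(X)=d(B_{\alpha}(X))$ (recalled in Section~2). The heart of the argument is therefore to relate \emph{every countable dense subset of $B_{\alpha}(X)$ is sequentially dense} to \emph{$X\models S_{1}(Z^{\alpha}_{\Omega},Z^{\alpha}_{\Gamma})$}.

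For $(3)\Rightarrow(1)$, I will fix a countable dense $D\subset B_{\alpha}(X)$ and a target $f\in B_{\alpha}(X)$, and for each $k\in\omega$ form
$$\mathcal{V}_{k}=\{A_{k}(d):d\in D\},\qquad A_{k}(d)=\{x\in X : |d(x)-f(x)|\le 1/k\}.$$
Each $A_{k}(d)$ is the zero-set of the $B_{\alpha}$-function $\max(|d-f|-1/k,0)$, and using the Lebesgue--Hausdorff formulas $Z_{\alpha+1}=(CZ_{\alpha})_{\delta}$ and $Z_{\lambda}=(\bigcup_{\xi<\lambda}CZ_{\xi})_{\sigma\delta}$, I expect to rewrite $A_{k}(d)$ as a countable intersection of zero-sets of strictly lower Baire class, placing it in $Z^{\alpha}$. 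Density of $D$ in the pointwise topology turns each $\mathcal{V}_{k}$ into an $\omega$-cover, so $\mathcal{V}_{k}\in Z^{\alpha}_{\Omega}$. Applying $S_{1}(Z^{\alpha}_{\Omega},Z^{\alpha}_{\Gamma})$ to $(\mathcal{V}_{k})_{k}$ selects $d_{k}\in D$ such that $\{A_{k}(d_{k})\}$ is a $\gamma$-cover of $X$, which is precisely the pointwise convergence $d_{k}\to f$.

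For $(1)\Rightarrow(3)$, given a countable $Z^{\alpha}$ $\omega$-cover $\{U_{n}\}$, I will write $U_{n}=\bigcap_{m}F_{n}^{m}$ with $F_{n}^{m}\in Z_{\xi_{n}^{m}}$ for some $\xi_{n}^{m}<\alpha$; then $h_{n}=\chi_{X\setminus U_{n}}=\sup_{m}\chi_{X\setminus F_{n}^{m}}$ is a pointwise supremum of functions of Baire class $<\alpha+1$ and therefore lies in $B_{\alpha}(X)$. Fixing a countable dense $D_{0}\subset C_{p}(X)$ (available since $iw(X)=\aleph_{0}$), I will assemble the countable dense subset
$$D=D_{0}\cup\{s+qh_{n}:s\in D_{0},\,n\in\omega,\,q\in\mathbb{Q}\setminus\{0\}\}$$
of $B_{\alpha}(X)$ and invoke strong sequential separability on a carefully chosen target $g\in B_{\alpha}(X)$, for instance $g=\chi_{X}+\sum_{n}2^{-n}h_{n}$, engineered so that $g$ cannot be reached by sequences remaining inside $D_{0}$. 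A sequence $s_{k}+q_{k}h_{n_{k}}\to g$ with cofinally many distinct indices $n_{k}$ then produces, along a subsequence, the pointwise convergence $q_{k}h_{n_{k}}\to 0$, so that $\{U_{n_{k_{j}}}\}$ is the desired $\gamma$-subcover.

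The principal obstacle I anticipate is the $(1)\Rightarrow(3)$ direction: engineering $D$ together with the target $g$ so that every sequence from $D$ converging to $g$ is \emph{forced} to contain cofinally many $h_{n}$-tagged terms with non-vanishing coefficient, and so that the resulting indices actually yield a $\gamma$-subcover. A secondary but essential bookkeeping step occurs in $(3)\Rightarrow(1)$, namely verifying that $A_{k}(d)\in Z^{\alpha}$ rather than merely $Z_{\alpha}$ or $CZ_{\alpha}$; this should proceed by transfinite induction on $\alpha$ applied to the Baire decomposition of $d-f$ into pointwise limits of functions of strictly lower Baire class, matched against the inductive formulas for the $Z^{\alpha}$-hierarchy.
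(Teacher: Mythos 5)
Your handling of $(2)\Leftrightarrow(3)$ via Proposition~\ref{pr104} and your plan for $(3)\Rightarrow(1)$ follow the paper's route (the paper normalizes the target to $\mathbf{0}$ and uses the sets $d_i^{-1}(-\frac1j,\frac1j)$ rather than your closed sets $A_k(d)$, but the selection step is identical). The direction $(1)\Rightarrow(3)$ is where your proposal genuinely breaks down, and it is exactly the obstacle you flagged. Since $C(X)$ is pointwise dense in $B_{\alpha}(X)$, your $D_0$ is already a countable dense subset of $B_{\alpha}(X)$; hypothesis (1) applied to $D_0$ itself therefore guarantees that your target $g$ is a pointwise limit of a sequence lying entirely in $D_0$. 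Sequential density of $D\supseteq D_0$ only asserts that \emph{some} sequence from $D$ converges to $g$, and that witness may well be the one inside $D_0$ (or one with $q_k\to 0$), carrying no information whatsoever about the cover; you cannot force the witness to use $h_n$-tagged terms, and even if it did, deducing $q_kh_{n_k}\to 0$ presupposes $s_k\to g$, which you do not have. The paper's device removes the escape route by making \emph{every} element of the dense set carry cover information: given the countable $Z^{\alpha}$ $\omega$-cover $\{V_i\}$ and a fixed countable sequentially dense $S=\{h_j\}$, it takes $D=\{f_{i,j}\}$ with $f_{i,j}=h_j$ on $V_i$ and $f_{i,j}\equiv 1$ on $X\setminus V_i$ (legitimate because $V_i\in Z^{\alpha}\subseteq A_{\alpha}(X)$, so $\chi_{V_i}\in B_{\alpha}(X)$); density of $D$ is exactly the $\omega$-cover property of $\{V_i\}$ combined with density of $S$, and then \emph{any} sequence $f_{i_k,j_k}\to\mathbf{0}$ forces each finite $K$ into $V_{i_k}$ eventually, since a point outside $V_{i_k}$ takes the value $1$. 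That ``clamp to $1$ off $V_i$'' is the missing idea.

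A secondary issue: in $(3)\Rightarrow(1)$ your membership claim $A_k(d)\in Z^{\alpha}$ does not follow from the Lebesgue--Hausdorff formulas in the way you expect. The set $A_k(d)$ is the zero set of a function in $B_{\alpha}(X)$, hence lies in $Z_{\alpha}(X)=(CZ_{\alpha-1}(X))_{\delta}$ for successor $\alpha$ --- a countable intersection of \emph{cozero}-type sets of lower class --- whereas $Z^{\alpha}=(\bigcup_{\xi<\alpha}Z_{\xi}(X))_{\delta}$, which, since each $Z_{\xi}(X)$ is closed under countable intersections, reduces to $Z_{\alpha-1}(X)$. Already for $\alpha=1$ these classes differ ($A_k(d)$ for $d\in B_1(X)$ is a countable intersection of cozero sets, not in general a zero set), so the application of $S_{1}(Z^{\alpha}_{\Omega},Z^{\alpha}_{\Gamma})$ to your covers $(\mathcal{V}_k)_k$ is not justified as written; this step needs either a different choice of the sets $A_k(d)$ or a separate argument that the selection principle extends to the class in which they actually live.
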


\begin{proof} $(1)\Rightarrow(3)$. Let $B_{\alpha}(X)$ be strongly sequentially
separable space and $\alpha=\{V_i\}$ be a countable $Z^{\alpha}$
$\omega$-cover of $X$. Let $S=\{h_i: i\in \omega \}$ be a
countable sequentially dense subset of $B_{\alpha}(X)$. Consider
the countable set $D=\{ f_{i,j}\in B_{\alpha}(X) :
f_{i,j}\upharpoonright V_i=h_j$ and $f_{i,j}\upharpoonright
(X\setminus V_i)=1$ for $i,j\in \omega \}$. Since $\alpha=\{V_i\}$
is $Z^{\alpha}$ $\omega$-cover of $X$ and $S$ is a dense subset of
$B_{\alpha}(X)$, the set $D$ is a dense subset of $B_{\alpha}(X)$.
By $(1)$, the set $D$ is a countable sequentially dense subset of
$B_{\alpha}(X)$. Then there exists a sequence
$\{f_{i_k,j_k}\}_{k\in \omega}$ converge to $\bf{0}$ such that
$f_{i_k,j_k}\in D$ for every $k\in \omega$. Claim that the
sequence $\{V_{i_k} : k\in \omega\}$ is a $\gamma$-cover of $X$.
Let $K$ be a finite subset of $X$ and $W=[K,(-1,1)]$ is a base
neighborhood of $\bf{0}$, then there is $k'\in \omega$ such that
$f_{i_k,j_k}\in W$ for each $k>k'$. It follows that $K\subset
V_{i_k}$ for each $k>k'$. We thus get that $X$ is a
$Z^{\alpha}$-cover $\gamma$-set.

$(2)\Leftrightarrow(3)$. By Proposition \ref{pr104}.

$(2)\Rightarrow(1)$ Let $iw(X)=\aleph_0$, $X$ $\models$
$S_{1}(Z^{\alpha}_{\Omega}, Z^{\alpha}_{\Gamma})$ and $D=\{d_i:
i\in \omega\}$ be a countable dense subset of $B_{\alpha}(X)$.
Claim that there exists a sequence $\{d_{i_k}\}_{k\in \omega}$
such that $d_{i_k}\in D$ for every $k\in \omega$ and
$\{d_{i_k}\}_{k\in \omega}$ converge to $\bf{0}$.

The set $\mathcal{V}_j=\{V^i_j=d_i^{-1}(-\frac{1}{j},\frac{1}{j}):
d_i\in D\}$ is a countable $\omega$-cover of $X$ by $Z^{\alpha}$
sets for every $j\in \omega$. Indeed,  let $K$ be a finite subset
of $X$ and $W=[K, (-\frac{1}{j},\frac{1}{j})]$ be a base
neighborhood of $\bf{0}$, then there is $d\in D$ such that $d\in
W$, hence, $K\subset d^{-1}(-\frac{1}{j},\frac{1}{j})$.

By (2), there is a sequence $\{V_j^{i(j)}\}_{j\in \omega}$ such
that $V_j^{i(j)}\in \mathcal{V}_j$ and $\{V^{i(j)}_j: j\in
\omega\}$ is a $\gamma$-cover of $X$. Claim that
$\{d_{i(j)}\}_{j\in \omega}$ converge to $\bf{0}$. Let $K$ be a
finite subset of $X$, $\epsilon>0$ and $O=[K,
(-\epsilon,\epsilon)]$ be a base neighborhood of $\bf{0}$, then
there is $j'\in \omega$ such that $\frac{1}{j'}<\epsilon$ and
$K\subset V_j^{i(j)}$ for $j>j'$. It follows that $d_{i(j)}\in O$
for $j>j'$.

\end{proof}

\medskip

Recall that a set $X$ is called a Borel-cover $\gamma$-set iff
every countable $\omega$-cover of $X$ by Borel sets contains a
$\gamma$-cover (see in \cite{mil1}).

 Being a Borel-cover $\gamma$-set is equivalent to saying that for
 any $\omega$-sequence of countable Borel $\omega$-covers of $X$
 we can choose one element from each and get a $\gamma$-cover of
 $X$ -- this is denoted $S_1(B_{\Omega},B_{\Gamma})$. The
 equivalence was proved by Gerlitz and Nagy \cite{gn} for open
 covers i.e., for $S_1(\Omega,\Gamma)$, but the proof works also
 for Borel covers as was noted in Scheepers and Tsaban \cite{scts}.

\begin{proposition}(Scheepers, Tsaban)\label{pr1} A Borel-cover $\gamma$-set $X$ is equivalent
to $X$ $\models$ $S_{1}(B_{\Omega}, B_{\Gamma})$.
\end{proposition}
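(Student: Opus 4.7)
The plan is to prove both directions of the equivalence. The forward direction is immediate: if $X \models S_{1}(B_{\Omega}, B_{\Gamma})$ and $\mathcal{U}$ is a countable Borel $\omega$-cover of $X$, apply the selection principle to the constant sequence $\mathcal{U}_n = \mathcal{U}$ for every $n$; the resulting selection $(U_n)$ is a $\gamma$-cover of $X$ contained in $\mathcal{U}$, hence a Borel $\gamma$-subcover of $\mathcal{U}$.

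For the converse, I would imitate the Gerlits-Nagy diagonalization from \cite{gn}, which is the model the authors point to. Given a sequence $(\mathcal{U}_n)_{n \in \omega}$ of countable Borel $\omega$-covers, enumerate each $\mathcal{U}_n = \{U^n_k : k \in \omega\}$ and form the countable family of finite intersections
$$
\mathcal{V} = \{V_{n,s} : n \in \omega,\ s \in \omega^n\}, \qquad V_{n,s} = \bigcap_{i < n} U^i_{s(i)}.
$$
Each $V_{n,s}$ is Borel, and $\mathcal{V}$ is a countable Borel $\omega$-cover, since for any finite $F \subset X$ and any $n$ one can choose $s(i) \in \omega$ with $F \subset U^i_{s(i)}$ for $i < n$, giving $F \subset V_{n,s}$. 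Applying the Borel-cover $\gamma$-set hypothesis to $\mathcal{V}$ yields a $\gamma$-subcover $\{V_{n_j, s_j}\}_{j \in \omega}$. Once we know $n_j \to \infty$ (after passing to a subsequence, which preserves being a $\gamma$-cover), the desired selection is extracted by the diagonal rule $U_i = U^i_{s_{j(i)}(i)}$, where $j(i) = \min\{j : n_j > i\}$: for any finite $F$ and any $j_0$ with $F \subset V_{n_j, s_j}$ for every $j \geq j_0$, one obtains $F \subset V_{n_{j(i)}, s_{j(i)}} \subset U^i_{s_{j(i)}(i)} = U_i$ whenever $j(i) \geq j_0$, which holds for all sufficiently large $i$. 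Hence $\{U_i\}_{i \in \omega}$ is a $\gamma$-cover, verifying $S_{1}(B_{\Omega}, B_{\Gamma})$.

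The step I expect to be the main obstacle is justifying that the lengths $n_j$ can be arranged to tend to infinity. If instead some $n^*$ occurs for infinitely many $j$, the $\gamma$-subcover concentrates in $\{V_{n^*, s} : s \in \omega^{n^*}\}$ and supplies selections only from $\mathcal{U}_0, \ldots, \mathcal{U}_{n^*-1}$. The standard workaround, following Gerlits-Nagy, is to iterate the construction on the tail sequences $(\mathcal{U}_n)_{n \geq N}$ as $N$ grows, or equivalently to enlarge $\mathcal{V}$ so that every Borel $\gamma$-subcover is forced to use arbitrarily large lengths. As Scheepers and Tsaban emphasize in \cite{scts}, the diagonalization itself is insensitive to whether the underlying covers are open or Borel, because only countable intersections are used and these preserve Borelness; once the length-growth issue is dispatched, the transfer from the open case of \cite{gn} is routine.
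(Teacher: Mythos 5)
Your forward direction is fine, and your converse follows the intended Gerlits--Nagy diagonalization; for what it is worth, the paper itself supplies no proof at all here --- the proposition is quoted from Scheepers and Tsaban \cite{scts}, with the remark that the Gerlits--Nagy argument from \cite{gn} for open covers ``works also for Borel covers.'' So the only thing to compare against is that citation, and the question is whether your writeup actually constitutes the cited argument. It does not, because the step you yourself flag as ``the main obstacle'' is not a technicality to be dispatched: it is the entire content of the lemma. The $\gamma$-subcover of $\mathcal{V}$ extracted from the hypothesis can perfectly well concentrate on a single length $n^{*}$ (for instance, if $\mathcal{U}_0$ already happens to contain a $\gamma$-subcover $\{U^0_{k_j}\}$, then $\{V_{1,(k_j)}\}$ is a legitimate $\gamma$-subcover of $\mathcal{V}$ living entirely at length $1$, and the hypothesis gives you no control over which $\gamma$-subcover you receive). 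In that case you obtain selections only from $\mathcal{U}_0,\dots,\mathcal{U}_{n^{*}-1}$ and the argument stops.

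Neither of your two proposed repairs closes this. Iterating on the tails $(\mathcal{U}_n)_{n\ge N}$ can land in the bounded case at every stage, producing an infinite sequence of blocks $[0,n^{*}_0], (n^{*}_0,n^{*}_1],\dots$ together with a $\gamma$-cover of intersections for each block; but a finite set $F\subseteq X$ is then only guaranteed to be eventually captured \emph{within} each block's $\gamma$-cover separately, and since the final selection must commit to one element per index before $F$ is known, $F$ may escape infinitely many of the chosen $U_i$ (one or more per block). The amalgamated selection therefore need not be a $\gamma$-cover, and making the blocks cooperate is precisely the nontrivial amalgamation that Gerlits and Nagy carry out. The alternative of ``enlarging $\mathcal{V}$ so that every $\gamma$-subcover is forced to use arbitrarily large lengths'' is asserted but no such enlargement is exhibited, and none is obvious (disjointifying the levels or padding them does not prevent concentration). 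To complete the proof you should either reproduce the actual lemma --- see \cite{gn}, or Lemma 1.2 of \cite{gami} for the standard clean statement, noting as in \cite{scts} that the construction uses only countable intersections and hence preserves Borelness --- or route the argument through the Fr\'{e}chet--Urysohn property of the relevant function space, which is how the amalgamation is usually effected. Your observation that the open-to-Borel transfer is routine is correct; the open-cover argument itself is the part that is missing.
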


Note that T. Orenshtein and B. Tsaban proved (Lemma 2.8 in
\cite{orts}) the interesting

\begin{proposition}(Orenshtein, Tsaban) \label{pr11}
Assume that a space $X$ is a Borel-cover $\gamma$-set. Then for
each countable $A\subset B(X)$, each $f\in \overline{A}$ is a
pointwise limit of a sequence of elements of $A$.
\end{proposition}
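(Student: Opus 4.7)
The plan is to mimic the classical Gerlits--Nagy style argument, using the selection principle $S_1(B_\Omega,B_\Gamma)$ supplied by the Borel-cover $\gamma$-set hypothesis (via Proposition \ref{pr1}) to diagonalize a family of Baire $\omega$-covers built directly from $A$ and $f$. Enumerate $A=\{a_n:n\in\omega\}$ and, for each $k\in\omega$, set $V_n^k=\{x\in X:|a_n(x)-f(x)|<1/k\}$. Since $A\subset B(X)$ and $f$ lies in the pointwise closure of $A\subset B(X)$, each $a_n-f$ is Baire, so every $V_n^k$ is a Baire subset of $X$ and $\mathcal{V}_k=\{V_n^k:n\in\omega\}$ is countable.

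Next I would verify that each $\mathcal{V}_k$ is an $\omega$-cover of $X$ by Baire sets. Given a finite $F\subset X$, the set $[F,1/k]=\{g:|g(x)-f(x)|<1/k\text{ for every }x\in F\}$ is a basic pointwise neighborhood of $f$, so by $f\in\overline{A}$ there exists $n$ with $a_n\in[F,1/k]$; that is, $F\subseteq V_n^k$, which puts $\mathcal{V}_k$ in $B_\Omega$. (If one wishes to guarantee strictly that $X\notin\mathcal{V}_k$, drop the trivial members; they play no role in the diagonalization.)

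Having produced countably many countable Baire $\omega$-covers $\mathcal{V}_k$, I would apply Proposition \ref{pr1}: the Borel-cover $\gamma$-set property is $S_1(B_\Omega,B_\Gamma)$, so one can pick $V_{n(k)}^k\in\mathcal{V}_k$ so that $\{V_{n(k)}^k:k\in\omega\}$ is a $\gamma$-cover of $X$. Finally, the sequence $\{a_{n(k)}\}_{k\in\omega}\subset A$ converges pointwise to $f$: for any $x\in X$, the $\gamma$-cover condition gives $k_x$ such that $x\in V_{n(k)}^k$ for every $k>k_x$, hence $|a_{n(k)}(x)-f(x)|<1/k$ for all such $k$, and therefore $a_{n(k)}(x)\to f(x)$.

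I do not expect any deep obstacle: once the covers $\mathcal{V}_k$ are defined, everything is essentially bookkeeping. The only subtle point is checking that the $V_n^k$ are genuinely in the class covered by $B_\Omega$, which reduces to the observation that Baire functions form an algebra so that $a_n-f$ is Baire whenever $f$ is; the fact that $f\in\overline{A}\subseteq B(X)$ (or, if one prefers, is a pointwise limit of Baire functions, hence Baire) is used here. After that, the argument is precisely the Gerlits--Nagy diagonalization, which is why the proof is, as the authors remark, of the same shape as the classical one in \cite{gn}.
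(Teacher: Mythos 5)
Your argument is the standard Gerlits--Nagy diagonalization transplanted to Baire covers, which is exactly how this lemma is proved in the source; the paper itself gives no proof of Proposition~\ref{pr11} but simply quotes it from Orenshtein and Tsaban (Lemma~2.8 of \cite{orts}), and your reconstruction of the covers $\mathcal{V}_k=\{V^k_n\}$ with $V^k_n=\{x:|a_n(x)-f(x)|<1/k\}$, the $\omega$-cover verification, and the selection of a $\gamma$-cover are all correct. The one inaccurate detail is your parenthetical fix for the possibility that $X\in\mathcal{V}_k$: simply ``dropping the trivial members'' can destroy the $\omega$-cover property (take $a_n\equiv 1/(n+2)$ and $f\equiv 0$, where after deleting the members equal to $X$ nothing useful remains). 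The correct repair is a two-case split: if for every $k$ some $a_{n}$ satisfies $|a_{n}(x)-f(x)|<1/k$ for all $x$, those functions already converge to $f$ uniformly and you are done without any selection principle; otherwise there is $k_0$ with $X\notin\mathcal{V}_{k_0}$, and since $V^k_n\subseteq V^{k_0}_n$ for $k\geq k_0$ every $\mathcal{V}_k$ with $k\geq k_0$ is a genuine countable Baire $\omega$-cover, to which $S_1(B_\Omega,B_\Gamma)$ applies as you describe. With that adjustment the proof is complete.
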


In \cite{osszts},  authors was proved

\begin{corollary}\label{th121} For a Tychonoff space $X$ the following statements are
equivalent:

\begin{enumerate}

\item $B(X)$ is strongly sequentially separable;

\item $iw(X)=\aleph_0$ and $X$ $\models$ $S_{1}(B_{\Omega},
B_{\Gamma})$;

\item $iw(X)=\aleph_0$ and $X$ is a Borel-cover $\gamma$-set.

\end{enumerate}

\end{corollary}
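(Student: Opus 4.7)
The plan is to derive this as an immediate specialization of Theorem \ref{th25} to the case $\alpha=\omega_1$, once we identify the class $Z^{\omega_1}$ with the family of Baire sets of $X$. Since $B(X)=B_{\omega_1}(X)$ by definition, Theorem \ref{th25} with $\alpha=\omega_1$ already asserts the equivalence of (1) with $iw(X)=\aleph_0 \wedge X\models S_{1}(Z^{\omega_1}_{\Omega},Z^{\omega_1}_{\Gamma})$, and with $iw(X)=\aleph_0 \wedge X$ being a $Z^{\omega_1}$-cover $\gamma$-set. So the only real content left is a soft set-theoretic identification $Z^{\omega_1}=\mathrm{Baire}(X)$.

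The first step I would carry out is the identification $Z^{\omega_1}(X)=\bigcup_{\xi<\omega_1} Z_{\xi}(X)$, which is the family of all Baire sets of multiplicative class $<\omega_1$. By definition $Z^{\omega_1}=(\bigcup_{\xi<\omega_1} Z_{\xi}(X))_{\delta}$, so the inclusion $\supseteq$ is trivial (take a constant intersection). For $\subseteq$, given $B=\bigcap_n B_n$ with each $B_n\in Z_{\xi_n}(X)$ and $\xi_n<\omega_1$, set $\xi=\sup_n \xi_n+1<\omega_1$; by the Lebesgue–Hausdorff formulas recalled in Section 4 (in particular $Z_{\xi}=(\bigcup_{\eta<\xi}CZ_{\eta})_{\sigma\delta}$ for limit $\xi$ and $Z_{\xi+1}=(CZ_{\xi})_{\delta}$), each $B_n$ lies in $Z_{\xi}(X)$, and $Z_{\xi}(X)$ is closed under countable intersections, so $B\in Z_{\xi}(X)\subseteq\bigcup_{\xi<\omega_1} Z_{\xi}(X)$. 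Since the Baire sets of $X$ are exactly $\bigcup_{\xi<\omega_1}Z_{\xi}(X)=\bigcup_{\xi<\omega_1}CZ_{\xi}(X)$, this shows $Z^{\omega_1}(X)$ is precisely the Baire $\sigma$-algebra of $X$.

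Consequently, a countable family of Baire sets is a $Z^{\omega_1}$ cover precisely when it is in $B_{\Omega}$ (resp.\ $B_{\Gamma}$), so $Z^{\omega_1}_{\Omega}=B_{\Omega}$, $Z^{\omega_1}_{\Gamma}=B_{\Gamma}$, and the selection principles $S_{1}(Z^{\omega_1}_{\Omega},Z^{\omega_1}_{\Gamma})$ and $S_{1}(B_{\Omega},B_{\Gamma})$ coincide; likewise the notions of $Z^{\omega_1}$-cover $\gamma$-set and Borel-cover $\gamma$-set (where we use Baire in the non-metrizable setting) are the same. Plugging these identifications into Theorem \ref{th25} for $\alpha=\omega_1$ yields (1)$\Leftrightarrow$(2)$\Leftrightarrow$(3). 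The equivalence (2)$\Leftrightarrow$(3) is in any case Proposition \ref{pr1}.

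The main obstacle, such as it is, is the verification that $Z^{\omega_1}(X)$ does not grow beyond $\bigcup_{\xi<\omega_1}Z_{\xi}(X)$ when one takes countable intersections of elements at unbounded levels; this is a routine use of the cofinality argument $\sup_n\xi_n<\omega_1$ combined with the closure properties encoded in the Lebesgue–Hausdorff recursion, but it is the only non-formal step. Everything else is a direct rewording of the already proven Theorem \ref{th25} and Proposition \ref{pr1}.
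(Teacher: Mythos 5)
Your derivation is correct, but it is worth noting that the paper does not actually prove Corollary~\ref{th121}: it is quoted as a result of \cite{osszts}, stated just after the machinery for the general case. Your route --- specializing Theorem~\ref{th25} to $\alpha=\omega_1$ --- is therefore a self-contained internal derivation that the paper leaves implicit, and it is sound. The one piece of genuine content, the identification $Z^{\omega_1}=(\bigcup_{\xi<\omega_1}Z_{\xi}(X))_{\delta}=\bigcup_{\xi<\omega_1}Z_{\xi}(X)=Z_{\omega_1}(X)$, is handled correctly: the cofinality argument $\sup_n\xi_n<\omega_1$, together with the monotonicity of the multiplicative classes and their closure under countable intersections (both immediate from the Lebesgue--Hausdorff recursion quoted in Section~4), shows the $\delta$-operation does not enlarge the class, so $Z^{\omega_1}_{\Omega}=B_{\Omega}$ and $Z^{\omega_1}_{\Gamma}=B_{\Gamma}$ as cover families, and the two selection principles literally coincide. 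The equivalence of (2) and (3) is indeed just the Gerlits--Nagy diagonalization in the Borel/Baire setting (Proposition~\ref{pr104} at $\alpha=\omega_1$, or the Scheepers--Tsaban proposition). The only caveat, which you already flag, is terminological: for a general Tychonoff $X$ the phrase ``Borel-cover $\gamma$-set'' in item (3) must be read with ``Baire sets'' in place of ``Borel sets'', matching the paper's convention for $B_{\Omega}$ and $B_{\Gamma}$; with that reading your argument gives exactly the stated corollary.
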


Recall that the space $Y$ is an $\alpha_1$ space, if for each
$y\in Y$, each countable family $\{A_n\}$ of sequences, each
converging to $y$, can be amalgamated as follows: there are
cofinite subsets $B_n\subset A_n$, $n\in \omega$, such that the
set $B=\bigcup_n B_n$ converges to $y$ \cite{arh0}.

Let $Y$ be a metric space. A function $f: X\mapsto Y$ is a
quasi-normal limit of functions $f_n: X\mapsto Y$ if there are
positive reals $\epsilon_n$, $n\in \omega$, converging to $0$ such
that for each $x\in X$, $d(f_n(x),f(x))<\epsilon_n$ for all but
finitely many $n$. A topological space $X$ is $QN$-space if
whenever $\bf 0$ is a pointwise limit of a sequence of continuous
real-valued functions on $X$, we have that $\bf 0$ is a
quasi-normal limit of the same sequence.

\medskip

By Corollary 20, Corollary 21 and Theorem 32 in \cite{tszd}, we
have

\begin{theorem} For a set of reals $X$ and $0<\alpha\leq \omega_1$, the following are
equivalent.

\begin{enumerate}

\item $C_{p}(X)$ is an $\alpha_1$ space.

\item $B_{\alpha}(X)$ is an $\alpha_i$ space for each
$i=\overline{1,4}$.

\item $X$ is a $QN$ space.

\item $X$ $\models$ $S_1(F_{\Gamma}, F_{\Gamma})$.

\item $X$ $\models$ $S_1(B_{\Gamma}, B_{\Gamma})$.

\item Each Baire class $\alpha$ image of $X$ in $\omega^{\omega}$
is bounded;

\item Each Borel image of $X$ in $\omega^{\omega}$ is bounded.

\item $X$ $\models$
$S_1(Z^{\alpha}_{\Gamma},Z^{\alpha}_{\Gamma})$.

\end{enumerate}
\end{theorem}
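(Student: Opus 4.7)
The plan is to split the eight conditions into two clusters: a combinatorial cluster $(3)$--$(8)$ describing the $QN$-property of $X$ via selection and boundedness, and a function-space cluster $(1)$--$(2)$ describing convergence structure on $C_p(X)$ and $B_\alpha(X)$. The proof then reduces to showing each cluster is internally equivalent and that the two clusters meet at $(3)$. I would organize the cycle of implications to exploit the hierarchy $F_\Gamma \subseteq Z^\alpha_\Gamma \subseteq B_\Gamma$ among families of $\gamma$-covers and the dual hierarchy $C(X)\subseteq B_\alpha(X)\subseteq B(X)$ among function classes.

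For the combinatorial cluster I would first establish the chain $(5)\Rightarrow(8)\Rightarrow(4)$ trivially, since every closed set in a second-countable space is of type $Z^\alpha$ for every $\alpha\ge 1$ and every $Z^\alpha$-set is Borel, so every Borel $\gamma$-cover refines to one of the narrower classes. The reverse implication $(4)\Rightarrow(5)$ is the Scheepers--Tsaban promotion trick (a Borel $\omega$-cover of a set of reals can be sandwiched by an $F_\sigma$ $\omega$-cover constructed from a countable base), entirely analogous to Proposition \ref{pr1} for $S_1(B_\Omega,B_\Gamma)$. The equivalence $(3)\Leftrightarrow(7)$ is the Bukovsk\'y--Recław--Repick\'y characterization of $QN$-spaces by boundedness of Borel images, and $(6)\Leftrightarrow(7)$ follows because every Baire class $\alpha$ function into $\omega^\omega$ is Borel and, conversely, any Borel image can be approximated by a Baire class $\alpha$ image after a coordinate-wise composition that preserves boundedness. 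Finally $(5)\Leftrightarrow(7)$ is standard: a function $f\colon X\to\omega^\omega$ induces the Borel $\gamma$-covers $\{f^{-1}\{g\le^* h\}\}$ and conversely the $\gamma$-cover selection principle constructs a bound.

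For the function-space cluster I would use the classical theorem that $C_p(X)$ is $\alpha_1$ iff $X$ is a $QN$-space, giving $(1)\Leftrightarrow(3)$. For $(2)\Leftrightarrow(3)$ I would observe $(2)\Rightarrow(1)$ is free since $C_p(X)\subseteq B_\alpha(X)$ and the $\alpha_1$ property is hereditary under dense subspaces that admit enough convergent sequences; the reverse direction $(3)\Rightarrow(2)$ needs the $QN$ property to amalgamate sequences in $B_\alpha(X)$ to $\bf 0$, which one obtains by lifting the quasi-normal witness through Proposition \ref{pr11}-style pointwise approximations (each $B_\alpha$ element is a sequential limit of a countable subset). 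The chain $\alpha_1\Rightarrow \alpha_2\Rightarrow \alpha_3\Rightarrow \alpha_4$ is automatic, and under $QN$ the $\alpha_4$ property already suffices to run the diagonal selection back to $\alpha_1$.

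The main obstacle is the direction $(2)\Rightarrow(3)$, specifically showing that $\alpha_4$ for $B_\alpha(X)$ is already strong enough to force $X$ to be $QN$. The difficulty is that a sequence in $B_\alpha(X)$ converging to $\bf 0$ need not come from $C(X)$, so one cannot apply the classical $C_p$-characterization directly. I would handle this by constructing, from a given sequence of continuous functions converging to $\bf 0$, a family of sequences in $B_\alpha(X)$ witnessing a potential failure of $\alpha_4$, then transferring the amalgamated cofinite-union-convergent sequence back to a quasi-normal convergent sequence via the $\gamma$-cover $\{f_n^{-1}(-\varepsilon_n,\varepsilon_n)\}$; the technical calibration between the $\varepsilon_n$ and the cofinite selection is where the argument has to be done carefully, and this is exactly the content imported from Corollary~20, Corollary~21 and Theorem~32 of \cite{tszd}.
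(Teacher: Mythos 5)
Your high-level architecture --- two clusters meeting at the $QN$ property, with most equivalences imported from Tsaban--Zdomskyy, Bukovsk\'y--Rec\l aw--Repick\'y and Scheepers--Tsaban --- matches what the paper actually does: its ``proof'' is a citation of Corollaries 20, 21 and Theorem 32 of \cite{tszd}. The problem is that the one ingredient the paper adds on top of those citations, namely the justification of items (6) and (8), is exactly the step your sketch gets wrong. Your proposed mechanisms do not work: a ``sandwich by an $F_{\sigma}$ cover built from a countable base'' cannot promote closed-cover selection to Borel-cover selection by purely formal means (the paper itself exhibits, under $\mathfrak{p}=\mathfrak{c}$, a $\gamma$-set which is not a Borel-cover $\gamma$-set, so no such soft promotion exists in the $\Omega\to\Gamma$ setting you invoke as the analogue), and on an arbitrary set of reals a Borel map into $\omega^{\omega}$ cannot in general be ``approximated by a Baire class $\alpha$ image after a coordinate-wise composition that preserves boundedness''. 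As written, your arguments for $(4)\Rightarrow(5)$ and $(6)\Rightarrow(7)$ are gaps.

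The missing idea is the one the paper states immediately after the theorem: each of these properties forces $X$ to be a $\sigma$-set ($S_1(B_{\Gamma},B_{\Gamma})$ does so by Proposition 4 of \cite{scts}; the closed-cover and $QN$ versions do so via weak distributivity of the closed sets, Theorem 5.2 of \cite{bu}). On a $\sigma$-set every Borel set is simultaneously $F_{\sigma}$ and $G_{\delta}$, hence lies in $Z^{\alpha}$ for every $\alpha\geq 1$, and every Borel function into $\omega^{\omega}$ is $F_{\sigma}$-measurable, hence of Baire class $1$ and a fortiori of class $\alpha$. This collapse is what makes $(4)\Leftrightarrow(5)\Leftrightarrow(8)$ and $(6)\Leftrightarrow(7)$ true; the monotonicity directions you give (from $F_{\Gamma}\subseteq Z^{\alpha}_{\Gamma}\subseteq B_{\Gamma}$ and from Baire class $\alpha$ functions being Borel) are fine but carry no content without it. A smaller point: you single out $(2)\Rightarrow(3)$ via $\alpha_4$ as the main obstacle, but condition (2) as stated is a conjunction that includes $\alpha_1$, and $\alpha_1$ passes to arbitrary subspaces, so $(2)\Rightarrow(1)\Rightarrow(3)$ is immediate from the cited $C_p$-characterization; the genuinely hard content lives entirely in the imported theorems plus the $\sigma$-set collapse.
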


Note note that $S_1(B_{\Gamma}, B_{\Gamma})$ implies that $X$ is
$\sigma$-set (Proposition 4 in \cite{scts}). It follows that
$S_1(B_{\Gamma},
B_{\Gamma})=S_1(Z^{\alpha}_{\Gamma},Z^{\alpha}_{\Gamma})$.

\begin{corollary} For a set of reals $X$, the following are
equivalent.

\begin{enumerate}

\item $X$ $\models$ $S_1(Z^{\alpha}_{\Gamma},
Z^{\alpha}_{\Gamma})$.

\item Every $F_{\sigma}$-measurable mapping $f: X \mapsto
\omega^{\omega}$ (i.e., $f^{-1}(U)\in F_{\sigma}$ for open $U$)
the image $f(X)\subset \omega^{\omega}$ is bounded.

\end{enumerate}

\end{corollary}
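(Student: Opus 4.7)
The plan is to derive this corollary almost directly from the preceding Theorem, by recognizing that $F_\sigma$-measurable maps into $\omega^\omega$ are precisely the Baire class $1$ maps. First I would spell out this identification: by Kuratowski's characterization recalled earlier in the paper, a real-valued function $g$ belongs to $B_1(X)$ iff $g^{-1}(G)$ is a $Z_\sigma$-set (in a perfectly normal space, an $F_\sigma$-set) for every open $G\subseteq\mathbb{R}$. Since $\omega^\omega$ has a countable basis of clopen cylinders, a map $f\colon X\to \omega^\omega$ satisfies $f^{-1}(U)\in F_\sigma$ for every open $U$ if and only if each coordinate projection $\pi_n\circ f$ is in $B_1(X)$, i.e.\ iff $f$ is of Baire class $1$. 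So condition~(2) says exactly that every Baire class $1$ image of $X$ in $\omega^\omega$ is bounded.

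Next I would apply the preceding Theorem with $\alpha=1$. The equivalence of its items (6) and (8) at $\alpha=1$ states precisely that every Baire class $1$ image of $X$ in $\omega^\omega$ is bounded if and only if $X\models S_1(Z^1_\Gamma, Z^1_\Gamma)$. Combining with the first paragraph, (2) is equivalent to $X\models S_1(Z^1_\Gamma,Z^1_\Gamma)$.

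Finally, to match the $\alpha$ appearing in (1), I would invoke the remark immediately following the Theorem, which records that $S_1(B_\Gamma,B_\Gamma)=S_1(Z^\alpha_\Gamma,Z^\alpha_\Gamma)$ uniformly in $\alpha$ (via the fact that $S_1(B_\Gamma,B_\Gamma)$ forces $X$ to be a $\sigma$-set, collapsing the hierarchy of Baire classes). In particular $S_1(Z^1_\Gamma,Z^1_\Gamma)=S_1(Z^\alpha_\Gamma,Z^\alpha_\Gamma)$, which closes the chain of equivalences.

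There is essentially no real obstacle: the content of the corollary is contained in the Theorem, and the only thing to verify carefully is the first-paragraph identification between $F_\sigma$-measurability and Baire class $1$ for maps into $\omega^\omega$. If anywhere a subtlety arises, it is in noting that the $\alpha$-independence of $S_1(Z^\alpha_\Gamma,Z^\alpha_\Gamma)$ requires $X$ to actually be a set of reals (so that one can apply the $\sigma$-set reduction), which is the standing hypothesis of the corollary.
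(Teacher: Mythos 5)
Your proposal is correct and matches the paper's (implicit) derivation: the corollary is stated without proof precisely because it follows from items (6) and (8) of the preceding theorem at $\alpha=1$, together with the note that $S_1(B_\Gamma,B_\Gamma)=S_1(Z^{\alpha}_\Gamma,Z^{\alpha}_\Gamma)$ for all $\alpha$ since $S_1(B_\Gamma,B_\Gamma)$ forces $X$ to be a $\sigma$-set. Your added verification that $F_\sigma$-measurable maps into $\omega^{\omega}$ are exactly the Baire class $1$ maps (via the clopen cylinder basis and the Lebesgue--Hausdorff characterization) is the one detail the paper leaves tacit, and you handle it correctly.
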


Recall the definition of the weak distributive law for a family of
subsets of a nonempty set (see \cite{bu}).

Let $X$ be a nonempty set, $\mathcal{A}\subseteq \mathcal{P}(X)$
being a family of subsets. $\mathcal{A}$ is called {\it weakly
distributive} if for any system $A_{n,m}\in \mathcal{A}$, $n,m \in
\omega$ such that

$\bigcap\limits_n \bigcup\limits_m A_{n,m}=X$

there exists a function $\varphi\in \omega^{\omega}$ such that
$\bigcup\limits_{k}\bigcap\limits_{n\geq k} \bigcup\limits_{m\leq
\varphi(n)} A_{n,m}=X$.

\begin{theorem}\label{th20} Let $X$ be a perfectly normal topological space and $0<\alpha\leq \omega_1$. Then the following statements are
equivalent:

\begin{enumerate}

\item $B_{\alpha}(X)$ is strongly sequentially separable;

\item $iw(X)=\aleph_0$ and $X$ $\models$ $S_{1}(B_{\Omega},
B_{\Gamma})$;

\item  $iw(X)=\aleph_0$ and $X$ $\models$ $S_{1}(F_{\Omega},
F_{\Gamma})$.

\end{enumerate}

\end{theorem}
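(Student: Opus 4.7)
The plan is to combine Theorem~\ref{th25}, Corollary~\ref{th121}, and Proposition~\ref{pr11} (Orenshtein--Tsaban), using perfect normality to collapse the various selection classes. First I verify (2) $\Leftrightarrow$ (3). Since every $F_\sigma$-set is Borel, $S_1(B_\Omega,B_\Gamma)$ restricts to $S_1(F_\Omega,F_\Gamma)$: the elements selected from a sequence of $F_\sigma$-$\omega$-covers lie in the original covers and are therefore $F_\sigma$, so they form an $F_\sigma$-$\gamma$-cover. Conversely, $S_1(F_\Omega,F_\Gamma)$ implies $S_1(F_\Gamma,F_\Gamma)$, which in turn forces $X$ to be a $\sigma$-set (the classical Scheepers--Tsaban argument for sets of reals, transferred through the condensation supplied by $iw(X)=\aleph_0$); in a perfectly normal $\sigma$-set the Borel and $F_\sigma$ classes coincide, so $S_1(F_\Omega,F_\Gamma)$ and $S_1(B_\Omega,B_\Gamma)$ become identical.

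For (2) $\Rightarrow$ (1), the assumption $iw(X)=\aleph_0$ gives separability of $B_\alpha(X)$ by Theorem~1 in~\cite{ps}. Let $D$ be a countable dense subset of $B_\alpha(X)$; then $B_\alpha(X)\subseteq\overline{D}^{B(X)}$. By Proposition~\ref{pr11}, because $X$ is a Borel-cover $\gamma$-set, every $f\in\overline{D}^{B(X)}$ is a pointwise limit of a sequence from $D$, so $D$ is sequentially dense in $B_\alpha(X)$ and (1) follows.

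For (1) $\Rightarrow$ (2), Theorem~\ref{th25} yields $iw(X)=\aleph_0$ and $X\models S_1(Z^\alpha_\Omega,Z^\alpha_\Gamma)$. When $\alpha>1$, the Baire-order corollary above gives $ord(C(X))\leq\alpha<\omega_1$, so $B_\alpha(X)=B(X)$ as topological spaces and Corollary~\ref{th121} delivers (2). For $\alpha=1$, in a perfectly normal space $Z^1(X)$ is precisely the family of closed sets, and an $F_\sigma$-$\omega$-cover $\{U_i=\bigcup_j F_{i,j}\}$ is refined by the countable closed $\omega$-cover $\{F_{i,1}\cup\cdots\cup F_{i,n}\}_{i,n}$; thus $S_1(Z^1_\Omega,Z^1_\Gamma)$ upgrades to $S_1(F_\Omega,F_\Gamma)$, and the equivalence (2) $\Leftrightarrow$ (3) of the first paragraph yields (2).

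The principal obstacle is the implication $S_1(F_\Gamma,F_\Gamma)\Rightarrow X$ is a $\sigma$-set in the full perfectly normal setting: the classical Scheepers--Tsaban argument is phrased for sets of reals, so one must push the selection principle forward along the condensation $\varphi:X\to M$ provided by $iw(X)=\aleph_0$, apply the classical theorem to $M$, and transfer the collapse of the Borel hierarchy on $M$ back to $X$ via perfect normality. Once this step is secured, every other implication is a direct application of the cited characterizations.
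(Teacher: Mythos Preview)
Your overall architecture is sound and considerably more explicit than the paper's one-line sketch. The implications $(2)\Rightarrow(1)$ via Proposition~\ref{pr11} and $(1)\Rightarrow(2)$ via Theorem~\ref{th25} together with Corollary~\ref{cr1} (for $\alpha>1$) or the closed-refinement trick (for $\alpha=1$) are all correct.

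The obstacle you flag is genuine, but your proposed resolution does not close it. Pushing $S_1(F_\Gamma,F_\Gamma)$ forward along the condensation $\varphi:X\to M$ and concluding that $M$ is a $\sigma$-set is fine; the ``transfer back'' is the problem. The map $\varphi$ is continuous only in one direction, so the Borel algebra of $X$ can be strictly larger than the $\varphi$-preimage of the Borel algebra of $M$. A $G_\delta$ subset $G$ of $X$ need not be the $\varphi$-preimage of anything reasonable in $M$: writing $G=\bigcap_n U_n$ with $U_n$ open in $X$, the images $\varphi(U_n)$ need not be Borel in $M$ at all. Perfect normality of $X$ tells you about $X$'s own closed sets, not about how they sit inside $M$, so it gives no leverage here. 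Since your $(1)\Rightarrow(2)$ argument for $\alpha=1$ also routes through $(3)\Rightarrow(2)$, this gap affects that case as well.

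The paper avoids the detour entirely. It observes that $S_1(F_\Omega,F_\Gamma)$ on $X$ directly implies that the family of closed subsets of $X$ is weakly distributive in the sense recalled just before the theorem, and then cites Theorem~5.2 of Bukovsk\'y--Rec\l aw--Repick\'y \cite{bu} together with \cite{kur} to conclude that a perfectly normal space with weakly distributive closed sets is itself a $\sigma$-set. This argument lives entirely inside $X$ and never touches the condensation, so the transfer problem does not arise. Once $X$ is a $\sigma$-set, every Baire set is $F_\sigma$, the classes $B_\Omega$, $F_\Omega$, $Z^\alpha_\Omega$ (and the corresponding $\Gamma$-classes) all coincide, and your remaining implications complete the cycle.
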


\begin{proof}
Note that if $X$ $\models$ $S_{1}(F_{\Omega}, F_{\Gamma})$ then
the family of closed subsets of $X$ is weakly distributive. Then
$X$ is a $\sigma$-set (see Theorem 5.2. in \cite{bu} and
\cite{kur}).

\end{proof}
\medskip

\begin{corollary}(see Corollary 5.4 in \cite{bu})
Let $X$ be a perfectly normal space. If $X$ $\models$
$S_{1}(F_{\Omega}, F_{\Gamma})$ then every subset of $X$ is a
$QN$-space.
\end{corollary}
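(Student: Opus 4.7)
The plan is to reuse the one-line implication already established inside the proof of Theorem \ref{th20} and then defer the remaining work to Bukovsky's Corollary 5.4 in \cite{bu}. The first step is to observe that on a perfectly normal space $X$, the hypothesis $X\models S_1(F_\Omega,F_\Gamma)$ forces the family of closed subsets of $X$ to be weakly distributive in the sense of Bukovsky. The second step is then simply to invoke Corollary 5.4 of \cite{bu}, whose conclusion is exactly that perfect normality together with weak distributivity of the closed-set lattice implies every subspace is a $QN$-space.

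For the first step, which carries essentially all of the content, I would start from a doubly-indexed family of closed sets $F_{n,m}\subseteq X$ with $\bigcap_n\bigcup_m F_{n,m}=X$, and form for each $n$ the countable $F_\sigma$ $\omega$-cover
\[
\mathcal{U}_n=\Bigl\{\bigcup_{m\le k}F_{n,m}:k\in\omega\Bigr\},
\]
which is an $\omega$-cover of $X$ because the inner unions are increasing in $k$ and exhaust $X$ as $k\to\infty$. Applying $S_1(F_\Omega,F_\Gamma)$ to the sequence $(\mathcal{U}_n)_n$ yields indices $\varphi(n)\in\omega$ such that $\{\bigcup_{m\le\varphi(n)}F_{n,m}:n\in\omega\}$ is a $\gamma$-cover of $X$; rewriting this as $\bigcup_k\bigcap_{n\ge k}\bigcup_{m\le\varphi(n)}F_{n,m}=X$ is exactly the defining identity of weak distributivity for the family of closed subsets of $X$. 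This is the same implication already used (silently) in the proof of Theorem \ref{th20}.

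The main obstacle is only the careful bookkeeping needed to identify countable $F_\sigma$ $\omega$-covers of $X$ with monotone families of closed sets whose countable unions exhaust $X$; once this translation is in place the conclusion is an immediate appeal to Bukovsky's Corollary 5.4.
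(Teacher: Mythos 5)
Your proof is correct and follows exactly the route the paper intends: the observation that $S_{1}(F_{\Omega},F_{\Gamma})$ forces the family of closed subsets of $X$ to be weakly distributive (asserted without detail in the proof of Theorem \ref{th20}), followed by an appeal to Corollary 5.4 of \cite{bu}; you simply make the diagonalization explicit. The only cosmetic gap is the degenerate case where some finite union $\bigcup_{m\le k}F_{n,m}$ already equals $X$ (so that $\mathcal{U}_n$ formally fails to be an $\omega$-cover because it contains $X$ as a member), which is disposed of by setting $\varphi(n)=k$ for such $n$.
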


\begin{theorem} $(\mathfrak{p}=\mathfrak{c})$ There is a consistent example of a set of reals $X$, such that
$C_p(X)$ is strongly sequentially separable, but $B_1(X)$ is not
strongly sequentially separable.
\end{theorem}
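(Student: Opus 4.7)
The plan is to exhibit, under $\mathfrak{p}=\mathfrak{c}$, a set of reals $X\subseteq\mathbb{R}$ that is a $\gamma$-set but not a $\sigma$-set; once $X$ is in hand the two conclusions follow from the equivalences already proved in the paper. Since the $\gamma$-property is preserved by continuous surjections, such an $X$ satisfies projective $S_{1}(\Omega,\Gamma)$, and $iw(X)=\aleph_0$ because $X\subseteq\mathbb{R}$; Theorem~\ref{th3} then gives that $C_p(X)$ is strongly sequentially separable. Conversely, $X$ is perfectly normal, so Theorem~\ref{th20} says that $B_1(X)$ being strongly sequentially separable would force $X\models S_{1}(F_\Omega,F_\Gamma)$, and the weak-distributivity step from the proof of Theorem~\ref{th20} would then force $X$ to be a $\sigma$-set, contradicting the choice of $X$. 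Hence $B_1(X)$ is not strongly sequentially separable.

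For the construction, I would run a Galvin--Miller-style transfinite recursion of length $\mathfrak{c}$. Because $\mathfrak{p}\leq\mathfrak{b}$, the hypothesis $\mathfrak{p}=\mathfrak{c}$ gives $\mathfrak{b}=\mathfrak{c}$, and the characterization $\mathfrak{b}=\min\{|Y|:Y$ is separable metrizable and not a $\sigma$-set$\}$ then forces the target to satisfy $|X|=\mathfrak{c}$. Enumerate $(\mathcal{U}_\alpha)_{\alpha<\mathfrak{c}}$ as all countable cozero families in $\mathbb{R}$ and $(G_\alpha,F_\alpha)_{\alpha<\mathfrak{c}}$ as all pairs in which $G_\alpha$ is coded by a sequence of rational-endpoint open sets (so $G_\alpha$ is $G_\delta$) and $F_\alpha$ by a sequence of rational-endpoint closed sets (so $F_\alpha$ is $F_\sigma$). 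At stage $\alpha$, if $\mathcal{U}_\alpha$ is an $\omega$-cover of the points selected so far, use $\mathfrak{p}=\mathfrak{c}$ and the pseudo-intersection trick to commit to a countable $\gamma$-subcover of $\mathcal{U}_\alpha$ for those points; simultaneously, if $G_\alpha\triangle F_\alpha\neq\emptyset$, adjoin one point from it to $X$. At the end $|X|=\mathfrak{c}$; every countable cozero $\omega$-cover of $X$ has a $\gamma$-subcover, so $X\models S_{1}(\Omega_{cz}^{\omega},\Gamma)$, whence $X$ is a $\gamma$-set by Theorem~\ref{th21}; and the $G_\delta$ subset of $X$ coded by some $G_\alpha$ agrees with no $F_\sigma$ subset coded by any $F_\alpha$, so $X$ is not a $\sigma$-set.

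The main obstacle is ensuring that the $\gamma$-commitments and the diagonal insertions do not conflict across stages: a $\gamma$-subcover committed at stage $\alpha$ must remain a $\gamma$-cover of $X$ after the later insertions. The standard Galvin--Miller device handles this by working with pseudo-intersections of tails of the $\mathcal{U}_\alpha$ rather than with a concrete choice of sets, so that at each later stage the commitment can be sharpened (using $\mathfrak{p}=\mathfrak{c}$ once more) to absorb any newly inserted point into the cofinite tails of every earlier commitment. Setting up this bookkeeping precisely, and verifying that each single-point insertion never invalidates a previously committed subcover, is the delicate technical core of the argument; the hypothesis $\mathfrak{p}=\mathfrak{c}$ is used exactly to perform these $\mathfrak{c}$-many pseudo-intersection refinements.
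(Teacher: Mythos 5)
Your reduction of the problem to ``find, under $\mathfrak{p}=\mathfrak{c}$, a $\gamma$-set of reals that is not a $\sigma$-set'' is a legitimate alternative framing: the first half (via Theorem~\ref{th3}) and the last step (via Theorem~\ref{th20}, since $S_1(F_\Omega,F_\Gamma)$ forces $X$ to be a $\sigma$-set) are both sound. But the construction you offer for such an $X$ has a fatal flaw, and it sits exactly where the mathematical difficulty lies. Your diagonalization requirement --- ``for every pair $(G_\alpha,F_\alpha)$ with $G_\alpha\triangle F_\alpha\neq\emptyset$, put a point of $G_\alpha\triangle F_\alpha$ into $X$'' --- is inconsistent with $X$ being a small set: singletons are simultaneously $G_\delta$ and $F_\sigma$, so taking $G=\{x\}$ and $F=\emptyset$ forces $x\in X$ for every real $x$, i.e.\ $X=\mathbb{R}$, which is certainly not a $\gamma$-set. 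What is actually needed is to fix one $G_\delta$ set $G$ that is not $F_\sigma$ in $\mathbb{R}$ and, for each $F_\sigma$ set $F$, insert a point of $G\triangle F$; but then the inserted points are dictated by sets $G\triangle F$ over which you have no control, and showing that these forced insertions can be absorbed into the Galvin--Miller tower machinery (your ``delicate technical core'') is precisely the unproved content. You give no argument and no reference that a $\gamma$-set which is not a $\sigma$-set even exists under $\mathfrak{p}=\mathfrak{c}$, so the proof does not close.

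The paper avoids this entirely by quoting Rec\l aw's theorem (valid under $\mathfrak{p}=\mathfrak{c}$ because the forcing involved is $\sigma$-centered): there is a $\gamma$-set $X$ admitting a Borel surjection onto $[0,1]$. Since $S_1(B_\Omega,B_\Gamma)$ is preserved by Borel images and fails for $[0,1]$, this $X$ does not satisfy $S_1(B_\Omega,B_\Gamma)$, and Theorems~\ref{th100} and~\ref{th20} finish the argument --- no claim that $X$ fails to be a $\sigma$-set is needed. If you want to salvage your route, you should either cite a known construction of a $\gamma$-set that is not a $\sigma$-set, or replace your target property ``not a $\sigma$-set'' by the weaker and more accessible ``not $S_1(B_\Omega,B_\Gamma)$'' and obtain it, as the paper does, from a Borel surjection onto an interval rather than from a pointwise diagonalization against all $G_\delta$/$F_\sigma$ pairs.
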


\begin{proof} By corollary 1 in \cite{re} (Rec\l aw's proof assumes Martin's axiom, but the partial order used is $\sigma$-centered
 so that in fact $\mathfrak{p}=\mathfrak{c}$ is enough (see Theorem 44 in \cite{scts})), there exists a $\gamma$-set
which can be mapped onto $[0,1]$ by a Borel function. It follows
that $X$ $\models$ $S_{1}(\Omega, \Gamma)$, but $X$ has not the
property $S_{1}(B_{\Omega}, B_{\Gamma})$. By Theorem \ref{th100}
and Theorem \ref{th20}, $C_p(X)$ is strongly sequentially
separable, but $B_1(X)$ is not strongly sequentially separable.
\end{proof}

\begin{corollary}($\mathfrak{p}=\mathfrak{c}$) There is a consistent example of a set of reals $X$, such
that $X$ $\models$ $S_{1}(\Omega, \Gamma)$, but $X$ has not the
property $ S_{1}(F_{\Omega}, F_{\Gamma})$.
\end{corollary}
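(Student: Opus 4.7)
The plan is to extract the corollary directly from the preceding theorem by unpacking the two strong sequential separability equivalences provided by Theorem \ref{th100} and Theorem \ref{th20}. Let $X$ be the set of reals produced in the proof of the preceding theorem, so that $C_p(X)$ is strongly sequentially separable while $B_1(X)$ is not.

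First I would observe that being a set of reals, $X$ is a zero-dimensional, separable metrizable space; in particular $X$ is perfectly normal and automatically satisfies $iw(X) = \aleph_0$. Moreover $X$ is Lindel\"of in all finite powers (this is built into being a $\gamma$-set via Reclaw's construction, since $\gamma$-sets are in particular Lindel\"of in all finite powers, but it also follows from $X \subseteq \mathbb{R}$ being hereditarily Lindel\"of), so the hypothesis $l^*(X) \leq \aleph_0$ needed to invoke Theorem \ref{th100} holds.

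Next, applying the equivalence (1) $\Leftrightarrow$ (9) of Theorem \ref{th100} to the hypothesis that $C_p(X)$ is strongly sequentially separable yields $X \models S_1(\Omega, \Gamma)$. On the other hand, the equivalence (1) $\Leftrightarrow$ (3) of Theorem \ref{th20}, applied in the contrapositive to the hypothesis that $B_1(X)$ is not strongly sequentially separable and combined with the already established $iw(X) = \aleph_0$, forces $X \not\models S_1(F_{\Omega}, F_{\Gamma})$. Putting these two conclusions together gives exactly the statement of the corollary.

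There is no real obstacle here: once the preceding theorem has been proved, the corollary is a straightforward translation through the two characterization theorems. The only mild point to be careful about is verifying that the example produced by Reclaw (as cited) really is a set of reals and that the finite-power Lindel\"of hypothesis in Theorem \ref{th100} is satisfied, both of which follow directly from the fact that the example lies inside $\mathbb{R}$ (or $2^{\omega}$) and is a $\gamma$-set.
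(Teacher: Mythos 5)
Your proposal is correct and follows essentially the same route as the paper: the corollary is extracted from the preceding theorem (Rec\l aw's $\gamma$-set) by passing through the characterizations in Theorem \ref{th100} (strong sequential separability of $C_p(X)$ versus $S_1(\Omega,\Gamma)$) and Theorem \ref{th20} (strong sequential separability of $B_1(X)$ versus $S_1(F_{\Omega},F_{\Gamma})$ for perfectly normal $X$ with $iw(X)=\aleph_0$). The paper's own argument is marginally more direct --- it already records in the proof of the theorem that $X\models S_1(\Omega,\Gamma)$ but fails $S_1(B_{\Omega},B_{\Gamma})$, and then only needs the equivalence of conditions (2) and (3) of Theorem \ref{th20} --- but your round trip through the function-space statements is logically equivalent and all the side hypotheses you check ($X$ perfectly normal, $iw(X)=\aleph_0$, $l^{*}(X)\leq\aleph_0$) are indeed the ones that need checking.
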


For an uncountable cardinal number $\kappa$ a set of real numbers
is a $\kappa$-Sierpi$\acute{n}$ski set if it has cardinality at
least $\kappa$, but its intersection with each set of Lebesgue
measure zero is less than $\kappa$. Note that every
$\mathfrak{b}$-Sierpi$\acute{n}$ski set has property
$S_1(B_{\Gamma},B_{\Gamma})$ and, hence, it is a $\sigma$-set.
Since sets of real numbers having property
$S_1(\mathcal{O},\mathcal{O})$ have measure zero, no
$\mathfrak{b}$-Sierpi$\acute{n}$ski set has property
$S_1(\mathcal{O},\mathcal{O})$. Hence,
$\mathfrak{b}$-Sierpi$\acute{n}$ski set has not property
$S_1(B_{\Omega},B_{\Gamma})$ (\cite{scts}).

By Corollary \ref{th92} and Theorem \ref{th20}, we have the next

\begin{proposition} Let $X$ be a $\mathfrak{b}$-Sierpi$\acute{n}$ski set. Then $B(X)$ is sequentially
separable, but is not strongly sequentially separable.
\end{proposition}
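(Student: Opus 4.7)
The plan is to derive both halves of the proposition directly from the characterizations already proved in the paper; the paragraph immediately preceding the statement records precisely the two covering-theoretic facts we need about a $\mathfrak{b}$-Sierpi\'nski set $X$. Namely (i) $X \models S_{1}(B_{\Gamma}, B_{\Gamma})$, so by the cited Proposition 4 of \cite{scts} $X$ is itself a $\sigma$-set, and (ii) $X$ fails $S_{1}(B_{\Omega}, B_{\Gamma})$ (since this would force $X$ to have Lebesgue measure zero via $S_{1}(\mathcal{O},\mathcal{O})$, contradicting the definition of a $\mathfrak{b}$-Sierpi\'nski set).

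For the first claim, that $B(X)$ is sequentially separable, I would invoke Corollary \ref{th93}: for a metrizable $X$, $B(X)$ is sequentially separable if and only if there exists a Borel isomorphism from $X$ onto a $\sigma$-set. Since $X$ is a set of reals, it is separable metrizable, and by (i) $X$ is itself a $\sigma$-set, so the identity map $\mathrm{id}\colon X \to X$ is a trivial Borel isomorphism witnessing the hypothesis of Corollary \ref{th93}. This yields sequential separability of $B(X)$ immediately.

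For the second claim, that $B(X)$ is not strongly sequentially separable, I would apply Corollary \ref{th121}: $B(X)$ is strongly sequentially separable if and only if $iw(X) = \aleph_{0}$ and $X \models S_{1}(B_{\Omega}, B_{\Gamma})$. The $i$-weight condition holds automatically because $X \subseteq \mathbb{R}$, so $X$ already carries a coarser second countable (in fact, its own) topology. However, by (ii) the property $S_{1}(B_{\Omega}, B_{\Gamma})$ fails for $X$, so the second condition of Corollary \ref{th121} is violated, and strong sequential separability of $B(X)$ cannot hold.

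There is essentially no technical obstacle here: the proposition is a bookkeeping consequence of two earlier characterizations, Corollary \ref{th93} for sequential separability and Corollary \ref{th121} for strong sequential separability, together with the well-known separation between $S_{1}(B_{\Gamma}, B_{\Gamma})$ and $S_{1}(B_{\Omega}, B_{\Gamma})$ exhibited by $\mathfrak{b}$-Sierpi\'nski sets. If anything requires care, it is merely to point out the matching of hypotheses (in particular that the identity serves as the required Borel isomorphism) and to cite \cite{scts} for both measure-zero implication of $S_{1}(\mathcal{O},\mathcal{O})$ and the fact that $\mathfrak{b}$-Sierpi\'nski sets satisfy $S_{1}(B_{\Gamma}, B_{\Gamma})$.
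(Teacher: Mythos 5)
Your proposal is correct and matches the paper's own (very terse) argument: the paper likewise derives the proposition from the facts recorded in the preceding paragraph --- that a $\mathfrak{b}$-Sierpi\'nski set satisfies $S_1(B_{\Gamma},B_{\Gamma})$ and is therefore a $\sigma$-set, but fails $S_1(B_{\Omega},B_{\Gamma})$ --- combined with the sequential-separability criterion via Baire/Borel isomorphism onto a $\sigma$-set and the strong-sequential-separability criterion via $S_1(B_{\Omega},B_{\Gamma})$. Your citations of Corollary~\ref{th93} and Corollary~\ref{th121} are just the metrizable/Borel specializations of the Corollary~\ref{th92} and Theorem~\ref{th20} that the paper invokes, so the route is essentially identical.
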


\section{Open questions}

{\bf Question 1.} Assume that for a Tychonoff space $X$, $C_p(X)$
is strongly sequentially separable and $\tau< \mathfrak{p}$. Does
it follow that $C_p(X, \mathbb{R}^{\tau})$ is strongly
sequentially separable ?

\medskip

Recall that a separable space $X$ is said to be a CDH (countable
dense homogeneous) space, if for any two countable dense subsets
$A$ and $B$ in $X$, there is an autohomeomorphism $h$ of $X$ such
that $h(A)=B$.

Note that $\mathbb{R}^{\kappa}$ is a $CDH$ space iff $\kappa<
\mathfrak{p}$.

\medskip

Clearly, that a sequentially separable $CDH$ space is a strongly
sequentially separable.

\medskip

{\bf Question 2.} Assume that for a Tychonoff space $X$, $C_p(X)$
is strongly sequentially separable and $\tau< \mathfrak{p}$. Does
it follow that $C_p(X, \mathbb{R}^{\tau})$ is a $CDH$ space ?

\medskip
{\bf Question 3.} Assume that for Tychonoff space $X$, $B(X)$ is
strongly sequentially separable and $\tau< \mathfrak{p}$. Does it
follow that $B(X, \mathbb{R}^{\tau})$ is a $CDH$ space ?

\medskip

{\bf Question 4.} Assume that for a set of reals $X$, $B(X)$ is
strongly sequentially separable. Does it follow that $B(X^n)$ is
strongly sequentially separable for any $n>1$ ?

\medskip

{\bf Question 5.} Assume that a space $X$ is a $\sigma$-,
$\gamma$-set. Does it follow that $X^n$ is a $\sigma$-,
$\gamma$-set for any $n>1$ ?

\medskip

{\bf Question 6.} Assume that there is a Baire isomorphism (class
$\alpha$) from a $\sigma$-set $X$ onto a Tychonoff space $Y$. Does
it follow that $Y$ is a $\sigma$-space ?

\medskip

{\bf Question 7.} Does {\bf MA} imply the existence of a
$\sigma$-, $\gamma$-set of size the continuum ?

\medskip

{\bf Question 8.} (MA) Assume that there is a $\sigma$-set $X$
such that $X$ is a $\gamma$-set and $|X|=\mathfrak{c}$. Does it
follow that for any $Y\subset X$, $Y$ is a $\gamma$-set ?

\medskip

{\bf Question 9.} Assume that $B_{\alpha}(X)$ is not sequentially
separable. Does it follow that $B_{\beta}(X)$ is not sequentially
separable for $\beta>\alpha$?

\medskip

{\bf Acknowledgment.} The author wishes to express his Thanks to
Boaz Tsaban for his inspiration for writing this paper and for the
detailed answers on the numerous author's questions.

\bibliographystyle{model1a-num-names}
\bibliography{<your-bib-database>}

\begin{thebibliography}{10}

\bibitem{arh0}
A.V. Arhangel'skii, \textit{The frequency spectrum of a
topological space and the classification of spaces}, Soviet Math.
Dokl. 13, (1972), 1186--1189.

\bibitem{arch10}
A.V. Arhangel'skii, \textit{Structure and classification of
topological spaces and cardinal invariants}, Uspekhi Mat. Nauk,
33, issue 6(204), (1978), 29--84.



\bibitem{arh}
A.V. Arkhangel'skii, \textit{Topological function spaces}, Moskow.
Gos. Univ., Moscow, (1989), 223 pp. (Arhangel'skii A.V.,
\textit{Topological function spaces}, Kluwer Academic Publishers,
Mathematics and its Applications, 78, Dordrecht, 1992 (translated
from Russian)).


\bibitem{bmz}
T. Banakh, M. Machura, L. Zdomskyy, \textit{On critical
cardinalities related to $Q$-sets}, Mathematical Bulletin of the
Shevchenko Scientific Society, 11, (2014), 21--32.


\bibitem{bcm}
M. Bonanzinga, F. Cammaroto, M. Matveev, \textit{Projective
versions of selection principles}, Topology and its Applications,
157, (2010), 874--893.




\bibitem{br}
J. Brendle, \textit{Generic constructions of small sets of reals},
Topology and its Applications, 71, (1996), p. 125--147.

\bibitem{bu}
L. Bukovsk$\acute{y}$, I. Rec\l aw, and M. Repick$\acute{y}$,
\textit{Spaces not distinguishing pointwise and quasinormal
convergence of real functions}, Topology and its Applications, 41,
(1991), p. 25–-41.

\bibitem{cob}
M.M. $\check{C}$oban, \textit{Baire sets in complete topological
spaces}, Ukrain. Mat. $\check{Z}$., (1970), 22, p. 330--342.





\bibitem{do}
E.K. van Douwen, \textit{The integers and topology}, in: Handbook
of Set-Theoretic Topology, North-Holland, Amsterdam, (1984).

\bibitem{gami}
F. Galvin, A.W. Miller, \textit{$\gamma$-sets and other singular
sets of real numbers}, Topology and its Applications, 17, (1984),
p.145--155.

\bibitem{glm}
P. Gartside, J.T.H. Lo, A. Marsh, \textit{Sequential density},
Topology and its Applications, 130, (2003), p.75--86.

\bibitem{g1}
J. Gerlits, \textit{Some Properties of $C(X)$,II}, Topology and
its Applications, 15, (1983), p.255--262.



\bibitem{gn}
J. Gerlits, Zs. Nagy, \textit{Some Properties of $C(X)$,I},
Topology and its Applications, 14, (1982), p.151--161.

\bibitem{j}
J.E, Jayne, \textit{Descriptive set theory in compact spaces},
Notices Amer. Math. Soc, 17, (1970), 268.

\bibitem{jr}
J. Jayne, C. Rogers, \textit{K-analytic Sets, Analytic Sets},
Academic Press, London (1980).

\bibitem{scheeper}
W. Just, A.W. Miller, M. Scheepers,  and P.J. Szeptycki,
\textit{The combinatorics of open covers, II}, Topology and its
Applications, 73, (1996), p.241--266.


\bibitem{ko}
L. Ko$\check{c}$inac, \textit{Selected results on selection
principles}, in: Sh. Rezapour (Ed.), Proceedings of the 3rd
Seminar on Geometry and Topology, Tabriz, Iran, Jule 15-17,
(2004), p. 71--104.




\bibitem{kur}
K. Kuratowski, \textit{Topology}, Academic Press, Vol.I, (1966).


\bibitem{leb}
H. Lebesgue, \textit{Sur les fonctions repr$\acute{e}$sentables
analytiquement}, J. Math. Pures Appl., 1, (1905), p. 139--216.


\bibitem{ms}
D.A. Martin, R.M. Solovay, \textit{Internal Cohen extensions},
Annals of Mathematical Logic, (2), 2, (1970), p. 143--178.




\bibitem{mil1}
A.W. Miller, \textit{A nonhereditary Borel-cover $\gamma$-set},
Real Analysis Exchange, v. 29, 2, (2003), p. 601--606.


\bibitem{mil2}
A.W. Miller, \textit{On generating the category algebra and the
Baire order problem}, Bull. Acad. Polon., 27, (1979), p. 751--755.


\bibitem{mil}
A.W. Miller, \textit{On the length of borel hierarchies}, Annals
of Mathematical Logic, 16, (1979), p. 233--267.



\bibitem{mey}
P.A. Meyer, \textit{The Baire order problem for compact spaces},
Duke Math. J., 33, (1966), 33--40.


\bibitem{nob}
 N. Noble, \textit{The density character of functions spaces}, Proc. Amer. Math. Soc. (1974), V.42, is.I.-P., 228--233.


\bibitem{orts}
T. Orenshtein, B. Tsaban \textit{Pointwise convergence of partial
functions: The Gerlits-Nagy Problem}, Advances in Mathematics,
232, issue 1, (2013), p. 311--326.



\bibitem{ospy}
A.V. Osipov, E.G. Pytkeev, \textit{On sequential separability of
functional spaces}, Topology and its Applications, 221, (2017), p.
270--274.

\bibitem{osszts}
A.V. Osipov, P. Szewczak, and B. Tsaban, \textit{Strongly
sequentially separable function spaces, via selection principles},
to appear.

\bibitem{pe}
E. Pearl (ed.), \textit{Open problems in topology, II}, Amsterdam,
2007.

\bibitem{ps}
A.V. Pestrjkov, \textit{O prostranstvah Berovskih funktsii} (On
spaces of Baire functions), Issledovanij po teorii vipuklih
mnogestv i grafov, Sbornik nauchnih trudov, Sverdlovsk, Ural'skii
Nauchnii Center, (1987), p. 53--59.


\bibitem{re} I.Rec\l aw, \textit{On small sets in the sense of
measure and category}, Fund. Math., 133, (1989), p. 255-–260.



\bibitem{ru}
M.E. Rudin, \textit{Martin's axiom}, in: J. Barwise, ed., Handbook
of Mathematical Logic (North-Holland, Amsterdam), p. 491--501.

\bibitem{sch}
M. Scheepers, \textit{Combinatorics of open covers (I): Ramsey
Theory}, Topology and its Applications, 69, (1996), p. 31--62.


\bibitem{sch1}
M. Scheepers, \textit{Selection principles and covering properties
in topology}, Not. Mat., 22, (2003), p. 3--41.


\bibitem{scts}
M. Scheepers, B. Tsaban, \textit{The combinatorics of Borel
covers}, Topology and its Applications, 121, (2002), p. 357--382.


\bibitem{sz}
J. Steprans, H.X. Zhou, \textit{Some results on CDH -- I},
Topology and its Applications, 28, (1988), p. 147--154.

\bibitem{sm}
E. Szpilrajn (Marczewski), \textit{ Sur un probl$\grave{e}$me de
M. Banach}, Fund. Math., 15, (1930), p. 212--214.


\bibitem{bts}
B. Tsaban, \textit{Some new directions in infinite-combinatorial
topology}, in: J.Bagaria, S. Todor$\check{c}$evic (Eds.), Set
Theory, in: Trends Math., Birkh$\ddot{a}$user, (2006), p.
225--255.



\bibitem{tszd}
B. Tsaban, L. Zdomskyy, \textit{Hereditarily Hurewicz spaces and
Arhangel'ski$\breve{i}$ sheaf amalgamations}, Journal of the
European Mathematical Society, 12, (2012), p. 353--372.


\bibitem{tszd1}
B. Tsaban, L. Zdomskyy, \textit{Scales, fields, and a problem of
Hurewicz}, Journal of the European Mathematical Society, 10,
(2008), p. 837--866.


\bibitem{vel}
N.V. Velichko, \textit{On sequential separability}, Mathematical
Notes, Vol.78, Issue 5, 2005, p. 610--614.



\end{thebibliography}







\end{document}